\numberwithin{equation}{section}
\definecolor{astral}{RGB}{46,116,181}
\DeclareMathAlphabet{\mathpzc}{OT1}{pzc}{m}{it}
\DeclareFontFamily{OT1}{pzc}{}
\DeclareFontShape{OT1}{pzc}{m}{it}{<-> s * [0.900] pzcmi7t}{}
\DeclareMathAlphabet{\mathpzc}{OT1}{pzc}{m}{it}
\newlength{\dhatheight}
\DeclareMathAlphabet\mathbfcal{OMS}{cmsy}{b}{n}
\definecolor{darkslategray}{rgb}{0.18, 0.31, 0.31}
\definecolor{warmblack}{rgb}{0.0, 0.26, 0.26}
\def\BState{\State\hskip-\ALG@thistlm}
\newtheorem{theorem}{Theorem}[section]
\newtheorem{lemma}[theorem]{Lemma}
\newtheorem{corollary}[theorem]{Corollary}
\theoremstyle{definition}
\newtheorem{example}{Example}[section]
\journal{...}
\newcommand{\R}{{\mathbb R}}
\begin{document}

\begin{frontmatter}

\title{Hyper-dual group inverse: existence, characterizations, and applications}

\vspace{-.4cm}

\author {Tikesh Verma$^{1}$, Amit Kumar$^{2}$, Vaibhav Shekhar$^{3}$, and N\'estor Thome$^{*4}$}

 \address{$^1${\scriptsize Department of Mathematics, National Institute of Technology Raipur, India.}\\
                   $^2${\scriptsize Department of Mathematics, SRM University-AP,  Amravati 522502, Andhra Pradesh, India.}\\
                        $^3${\scriptsize Department of Mathematics, Central University of Jharkhand, Ranchi, 835222, India.}\\
                        $^4${\scriptsize Instituto Universitario de Matem\'atica Multidisciplinar, Universitat Polit\`ecnica de Val\`encia, Valencia, Spain.}
                        \begin{align*}
                            Email: &^1rprtikesh@gmail.com  
                        \\ &^2amitdhull513@gmail.com  \\
    &^3vaibhav.shekhar@cuj.ac.in\\
    &^4njthome@mat.upv.es\\
    &*Corresponding Author
                        \end{align*}
                        \vspace{-1.5cm}} 
\begin{abstract}
Motivated by the recent work of Xiao and Zhong [AIMS Math. 9 (2024), 35125--35150: MR4840882], we propose a generalized inverse for a hyper-dual matrix called hyper-dual group generalized inverse (HDGGI). Firstly, we characterize the existence of the HDGGI of a hyper-dual matrix and we show that it is unique, whenever exists.
 The HDGGI is then used to solve a linear hyper-dual system. We discuss the minimal $P$-norm least-squares properties of hyper-dual group inverse. We also exploit some sufficient conditions under which the reverse and
forward-order laws for a particular form of the HDGGI and hyper-dual Moore-Penrose generalized inverse (HDMPGI) hold. Using the definition of dual matrix of order $n$, we finally establish necessary and sufficient condition for the existence of the group inverse of a dual matrix of order $n$.

\end{abstract}


\begin{keyword}
Generalized inverse; Group inverse; Dual matrix; Hyper-Dual matrix; Reverse order law; Forward order law.\\
{\bf Mathematics subject classifications: 15A09, 15A57, 15A24.}
\end{keyword}

\end{frontmatter}

\newpage
\section{Introduction}
A dual number has a representation of the form $\widehat{a}=a+\epsilon a_0$, where $a$ and $a_0$ are real numbers, and $\epsilon$ is the dual unit satisfying the following properties: $\epsilon\neq 0$ and $\epsilon^2= 0$. In this representation, the numbers associated with $+1$ (i.e. $a$) and the dual unit $\epsilon$ (i.e. $a_0$) are called the real and dual part of $\widehat{a}$, respectively.
Similarly, a hyper-dual number, denoted by $\tilde{a}$, has a representation of the form $\tilde{a}=a_0+\epsilon_1 a_1+\epsilon_2 a_2+\epsilon_1 \epsilon_2 a_3$, where $a_0$, $a_1$, $a_2$  and $a_3$ are real numbers, and the two dual units $\epsilon_1$ and $\epsilon_2$ satisfy the following properties: $\epsilon_1, \epsilon_2, \epsilon_1 \epsilon_2\neq 0$, $\epsilon_1^2= {\epsilon_2}^2=(\epsilon_1\epsilon_2)^2= 0$. A hyper-dual number can be re-written as $\tilde{a}=\widehat{a}+\epsilon_2\widehat{a}_{0}$, where $\widehat{a}=a_0+\epsilon_1 a_1$ and $\widehat{a}_0=a_2+\epsilon_1 a_3$. It is evident from here that a hyper-dual number is nothing but a combination of two dual numbers, where $\widehat{a}$ is called the primal part and
 $\widehat{a}_0$ is called the hyper-dual part of $\tilde{a}$, respectively.

The term {\it dual number} was first introduced by William Clifford in the $18^{\text{th}}$ century, while the {\it hyper-dual} was first coined by Fike
{\it et al.} (see \cite{fike1, fike2, fike3}). Both of these numbers have significant applications in mechanics (see \cite{rg1}). The set of dual numbers forms a ring and  the set of hyper-dual numbers forms a commutative, associative, unital algebra over ${\mathbb R}$ ,
denoted by ${\mathbb H}{\mathbb D}$.
An extension of dual (hyper-dual) numbers are dual (hyper-dual) vectors and dual (hyper-dual) matrices where the real numbers are replaced by real vectors and real matrices, respectively. An $n \times n$ real dual matrix $\widehat{A}=A+\epsilon_1 A_0$ is called invertible if the real matrix $A$ is invertible. In this case, the inverse of $\widehat{A}$ 
is given by $\widehat{A}^{-1} = A^{-1} - \epsilon_1 A^{-1}A_0A^{-1}$ \cite{Angeles:1998}. Similarly, an $n\times n$ real hyper-dual matrix $\tilde{A}=\widehat{A}+\epsilon_2 \widehat{A}_0$ (where $\widehat{A}=A_0+\epsilon_1A_1$ and $\widehat{A}_0=A_2+\epsilon_1 A_3$) is said to be invertible if $\widehat{A}^{-1}$ exists, and its inverse is given by $\tilde{A}^{-1}=\widehat{A}^{-1}-\epsilon _2\widehat{A}^{-1}\widehat{A}_0\widehat{A}^{-1}$.
Algebra of such matrices has vast applications in many areas of science and engineering, including kinematic problems \cite{app4}, robotics \cite{angeles}, surface shape analysis and computer graphics \cite{azar}, and rigid body motion \cite{oncu}. Consequently, many researchers are exploring theoretical aspects of dual matrices, for example, the SVD \cite{wei_5}, LU decomposition \cite{wei_1}, the QR decomposition \cite{wei_3}, exponential \cite{wei_2} and dual generalized inverses \cite{wei_4, rg2, mosic_1, dualgroup}.

Generalized inverses are required in order to solve dual linear systems as we can see in many recent papers \cite{ udw2, udw1, dualdrazin, dualgroup}. The study of generalized inverses of a dual matrix allows one to compute the dual equation simultaneously. However, unlike the case where the matrix has real entries, the existence of generalized inverses in the case of dual number entries is not guaranteed. So, investigation of the conditions for the existence of generalized inverses and to find their explicit expressions is an important issue. In this aspect, we will now retrieve the literature concerning definitions for some of the generalized inverses and their existence conditions. Let $\widehat{A}$ be a dual matrix of size $m\times n$,  Pennestr\'i and Valentini \cite{rg2, rg1} proposed the Moore-Penrose dual generalized inverse (MPDGI) of $\widehat{A}=A+\epsilon_1 A_0$, which is computed as $\widehat{A}^{p}=A^{\dagger}-\epsilon_1 A^{\dagger}A_0A^{\dagger}$. The MPDGI is used in many inverse problems of kinematics and machines \cite{app3, app4, app2, app1}.
Udwadia {\it et al.} \cite{udw2, udw1} proposed the {\it dual Moore-Penrose generalized inverse} (in short, DMPGI) which is as follows.
Let $\widehat{A}=A+\epsilon_1 A_0$ be a dual matrix, the unique matrix
$\widehat{X}$ (if it exists) satisfying
$$\widehat{A}\widehat{X}\widehat{A}=\widehat{A}, \widehat{X}\widehat{A}\widehat{X}=\widehat{X}, (\widehat{A}\widehat{X})^T=\widehat{A}\widehat{X}, \text{ and } (\widehat{X}\widehat{A})^T=\widehat{X}\widehat{A}$$
is called the DMPGI of $\widehat{A}$, and it is represented as $\widehat{A}^{\dagger}$.
In 2021, Zhong and Zhang \cite{dualgroup} established the {\it dual group generalized inverse} (DGGI). The definition of DGGI is stated next. Let $\widehat{A}=A+\epsilon_1 A_0$ be a dual matrix of size $n\times n$. Then, the unique dual matrix $\widehat{G}$ (if it exists) satisfying
\begin{equation*}
\widehat{A}\widehat{G}\widehat{A}=\widehat{A}, \widehat{G}\widehat{A}\widehat{G}=\widehat{G}, \text{ and } \widehat{A}\widehat{G}=\widehat{G}\widehat{A}
\end{equation*}
is called the DGGI of $\widehat{A}$ and we denote it as $\widehat{A}^{\#}.$ If $A_{0}=0$, then $\widehat{G}$ reduces to the group inverse of the matrix $A$. For more details on the group inverse, its usage in computing other generalized inverses, and its recent developments, see \cite{bask2010, ben1974, zhou_3, zhou_1}. The DGGI is used to find the dual $P$-norm solution of a dual linear system \cite{dualgroup}. Zhong and Zhang \cite{dualdrazin} further established necessary and sufficient conditions for the existence of the {\it dual Drazin generalized inverse} (DDGI) for a dual matrix.
Similarly, several other generalized inverses have been introduced in the literature for dual matrices (see \cite{cui:2024, mosic_2, rg5}).\par

Very recently, Xiao and Zhong \cite{hyper} introduced the notion of {\it hyper-dual Moore-Penrose generalized inverse} (HDMPGI) of a hyper-dual matrix. They further provide characterizations for the existence and least-square properties of the hyper-dual Moore-Penrose generalized inverse, and provided a formula for computing the HDMPGI (if it exists). Motivated by this work, we introduce the {\it hyper-dual group generalized inverse} (HDGGI) of a hyper-dual matrix and study its existence criteria and various characteristics, including the least-squares properties. Inspired by \cite{hyper},  we also obtain the necessary and sufficient condition for the existence of the group inverse for a dual matrix of order $n$.


Let $A$ and $B$ be invertible matrices of the same size. Then the reverse order law and the forward order law are represented as $(AB)^{-1}=B^{-1}A^{-1}$ and $(AB)^{-1}=A^{-1}B^{-1}$, respectively. It is well known that the reverse order law holds, but the forward order law does not always hold. Further, it is known that the reverse order law and forward order law for generalized inverses do not hold, in general.
The reverse and forward order laws for different generalized inverses for elements and matrices are discussed, for example, in \cite{illic_1, illic_2, k11, k12, verma:2024, k13}. But it has yet to be examined for hyper-dual matrices and for some generalized inverse of dual matrices. This article also  presents sufficient conditions for reverse and forward order laws to hold for the HDMPGI and HDGGI.
 
This paper is organized in the following way:  Section \ref{rmm1}  recalls some preliminary results. In Section \ref{rm2}, we introduce hyper-dual group inverse of a hyper-dual matrix. After that, we investigate existence, characterization, and properties of the HDGGI. Section \ref{rm3} discusses some results for solving a hyper-dual linear system using HDGGI, and  establishes sufficient conditions for the minimal P-norm least squares solution. In Section \ref{rm4}, we provide criteria for the reverse and forward order laws for DMPGI and DGGI. Finally, Section \ref{rm5} establishes necessary and sufficient conditions for the existence of
the group inverse of a dual matrix of order $n$.\\

\section{Preliminaries}\label{rmm1}
First, we will fix some of the notation that is frequently used in this article. 
Let $\mathbb{R}^n$, $\mathbb{\widehat{R}}^n$, and $\mathbb{\tilde{R}}^n$ denote the set of $n$-dimensional real vectors, dual vectors, and hyper-dual vectors, respectively (column-wise). Similarly, let $\mathbb{R}^{n\times n}$, $\mathbb{\widehat{R}}^{n\times n}$, and $\mathbb{\tilde{R}}^{n\times n}$ be the set of all real matrices, dual matrices, and hyper-dual matrices of size $n\times n$, respectively. Throughout this article, we deal with real square matrices. Let $Ind(A)$ denote the index of the matrix $A\in \mathbb{R}^{n\times n}$. We say that the dual matrix $\widehat{A}=A+\epsilon_1 A_0$ has {\it dual index 1} if the range spaces of $\widehat{A}^2$ and $\widehat{A}$ coincide \cite{rg5}. Clearly, whenever $A_0=0$, $\widehat{A}\in \widehat{\mathbb{R}}^{n\times n}$ reduces to $A\in \mathbb{R}^{n\times n}$ and in this case the dual index 1 of $\widehat{A}$ coincides with $Ind(A)$.



The following result will be used to prove some of the main results in this article. The first result provides equivalent conditions for the existence of the dual group inverse of a dual matrix.

\begin{theorem}\label{th2.1}\textnormal{\cite[Theorem 3.2]{dualgroup}}\\
    Let $\widehat{A}=A+\epsilon_1 A_{0}$ be a dual matrix such that $Ind(A)=1$. Then, the following statements are equivalent:
    \begin{enumerate}[(i)]
        \item The dual group inverse of $\widehat{A}$ exists;
        \item $\widehat{A}=P\begin{bmatrix}
            C & 0\\
            0 & 0
        \end{bmatrix}P^{-1}+\epsilon_1 P\begin{bmatrix}
            B_{1} & B_{2}\\
            B_{3} & 0
        \end{bmatrix}P^{-1}$, where $P$ and $C$ are nonsingular matrices;
        \item $(I-AA^{\#})A_{0}(I-AA^{\#})=0$.

         Furthermore, if the dual group inverse of $\widehat{A}$ exists, then
         $$\widehat{A}^{\#}={A}^{\#}+\epsilon_1[-{A}^{\#}{A}_0{A}^{\#}+ ({A}^{\#})^2{A}_0(I-{A}{A}^{\#})+(I-{A}{A}^{\#}){A}_0({A}^{\#})^2].$$ Equivalently, 
                           $$\widehat{A}^{\#}=P\begin{bmatrix}
            C^{-1} & 0\\
            0 & 0
        \end{bmatrix}P^{-1}+\epsilon_1 P\begin{bmatrix}
            -C^{-1}B_{1}C^{-1} & C^{-2}B_{2}\\
            B_{3}C^{-2} & 0
        \end{bmatrix}P^{-1}.$$
    \end{enumerate}
\end{theorem}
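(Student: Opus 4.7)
The plan is to anchor everything in the core–nilpotent decomposition of $A$. Since $Ind(A)=1$, there exist an invertible $P$ and a nonsingular $C$ with
\begin{equation*}
A=P\begin{bmatrix} C & 0 \\ 0 & 0\end{bmatrix}P^{-1},\qquad A^{\#}=P\begin{bmatrix} C^{-1} & 0 \\ 0 & 0\end{bmatrix}P^{-1},\qquad I-AA^{\#}=P\begin{bmatrix} 0 & 0 \\ 0 & I\end{bmatrix}P^{-1}.
\end{equation*}
Partitioning the dual part conformably as $A_0 = P\begin{bmatrix} B_1 & B_2 \\ B_3 & B_4\end{bmatrix}P^{-1}$, a direct block computation gives $(I-AA^{\#})A_0(I-AA^{\#}) = P\begin{bmatrix} 0 & 0 \\ 0 & B_4\end{bmatrix}P^{-1}$, so (iii) is equivalent to $B_4=0$, which is exactly the statement of (ii). This settles (ii) $\Leftrightarrow$ (iii).

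For (i) $\Rightarrow$ (iii), I would suppose $\widehat{G}=G+\epsilon G_0$ satisfies the three defining equations and separate real and dual parts. The real parts are $AGA=A$, $GAG=G$, $AG=GA$, which (together with the assumption $Ind(A)=1$ and uniqueness of the group inverse) force $G=A^{\#}$. The dual part of $\widehat{A}\widehat{G}\widehat{A}=\widehat{A}$ then becomes
\begin{equation*}
A_0 A^{\#}A + A G_0 A + A A^{\#} A_0 = A_0.
\end{equation*}
Multiplying on the left and on the right by $I-AA^{\#}$, and using the two elementary identities $(I-AA^{\#})A=0$ and $A(I-AA^{\#})=0$ (which follow from $AA^{\#}A=A$ and $AA^{\#}=A^{\#}A$), the middle term vanishes, and the outer terms collapse to $(I-AA^{\#})A_0(I-AA^{\#})=0$, giving (iii).

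For (iii) $\Rightarrow$ (i) together with the displayed formula, I would propose
\begin{equation*}
G_0 := -A^{\#}A_0 A^{\#} + (A^{\#})^2 A_0 (I-AA^{\#}) + (I-AA^{\#}) A_0 (A^{\#})^2
\end{equation*}
and verify that $\widehat{G} := A^{\#} + \epsilon G_0$ satisfies $\widehat{A}\widehat{G}\widehat{A}=\widehat{A}$, $\widehat{G}\widehat{A}\widehat{G}=\widehat{G}$, and $\widehat{A}\widehat{G}=\widehat{G}\widehat{A}$. The real parts are automatic from $A^{\#}$ being the group inverse of $A$. The dual-part verifications rely on the identities $AA^{\#}=A^{\#}A$, $AA^{\#}A=A$, $A^{\#}AA^{\#}=A^{\#}$, $(A^{\#})^2 A = A(A^{\#})^2 = A^{\#}$, together with the rewriting of (iii) in the useful form $A_0 = AA^{\#}A_0 + A_0 AA^{\#} - AA^{\#}A_0 AA^{\#}$. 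Uniqueness follows from the standard ``sandwich'' argument applied in the dual setting: if two candidates $\widehat{G}_1,\widehat{G}_2$ exist, then $\widehat{G}_1=\widehat{G}_1\widehat{A}\widehat{G}_1=\widehat{G}_1\widehat{A}\widehat{G}_2\widehat{A}\widehat{G}_1=\widehat{G}_2$, using commutativity of $\widehat{A}$ with each $\widehat{G}_i$. Finally, the block-matrix expression for $\widehat{A}^{\#}$ is obtained by substituting the core–nilpotent decomposition into the formula for $G_0$ and simplifying block by block.

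The main obstacle, I expect, is the dual-part verification of $\widehat{A}\widehat{G}\widehat{A}=\widehat{A}$ and of $\widehat{G}\widehat{A}\widehat{G}=\widehat{G}$ for the proposed $G_0$: each equation unfolds into three cross terms that must be simplified using the five identities above, and the key cancellations only close up after the rewriting of (iii). The block-form derivation at the very end is routine once these algebraic verifications are done.
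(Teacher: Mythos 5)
This theorem is quoted in the paper as a preliminary (Theorem 3.2 of Zhong--Zhang \cite{dualgroup}) and is not proved there, so there is no in-paper proof to compare against; judged on its own, your argument is correct and is essentially the standard one: the core--nilpotent block decomposition handles (ii) $\Leftrightarrow$ (iii), sandwiching the dual part of $\widehat{A}\widehat{G}\widehat{A}=\widehat{A}$ between two copies of $I-AA^{\#}$ gives (i) $\Rightarrow$ (iii), and the explicit candidate $G_0$ verifies (iii) $\Rightarrow$ (i) via the identity $A_0=AA^{\#}A_0+A_0AA^{\#}-AA^{\#}A_0AA^{\#}$; I checked that all three dual-part verifications close up exactly as you predict. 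Two small remarks: the one-line uniqueness chain $\widehat{G}_1=\widehat{G}_1\widehat{A}\widehat{G}_2\widehat{A}\widehat{G}_1=\widehat{G}_2$ is not a single-step identity and needs the usual repeated use of commutativity (though uniqueness is not actually part of the statement), and the $\epsilon^*$ appearing in the displayed formula of the statement is a typo in the paper for $\epsilon$, consistent with your reading.
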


\begin{theorem}\label{cor3.2}\textnormal{\cite[Corollary 3.2]{dualgroup}}\\
Let $\widehat{A}=A+\epsilon A_0$ be a dual matrix with $Ind(A)=1$. Then, the dual group inverse of $\widehat{A}$ exists and $\widehat{A}^{\#}=A^{\#}-\epsilon A^{\#}A_0A^{\#}$ if and only if $AA^{\#}A_0=A_0AA^{\#}=A_0$.
\end{theorem}

The next result gives characterizations of the dual index of the dual matrix $\widehat{A}$. 
\begin{theorem}\label{dualindex1}\textnormal{\cite[Theorem 2.5 \& Theorem 2.6]{rg5}}\\
    Let $\widehat{A}=A+\epsilon_1 A_0$ be a dual matrix. Then, the following are equivalent 
    \begin{enumerate}[(i)]
    \item $\widehat{A}^{\#}$ exists.
    \item the dual index of $\widehat{A}$ is 1.
        \item $Ind(A) = 1$ and $(I-AA^{\#})A_0(I-AA^{\#})=0$.
    \end{enumerate}
\end{theorem}

The following result establishes equivalent conditions for the existence of the hyper-dual Moore-Penrose generalized inverse of the given hyper-dual matrix $\tilde{A}$.

\begin{theorem}\label{thm2.2}\textnormal{\cite[Theorem 2.1]{hyper}}\\
Let $\tilde{A}=\widehat{A}+\epsilon_2 \widehat{A}_{0}$ be a hyper-dual matrix. Then, the following are equivalent:
\begin{enumerate}[(i)]
    \item The HDMPGI $\tilde{A}^{\dagger}$ of $\tilde{A}$ exists;
    \item $\widehat{A}^{\dagger}$ exists and $(I-\widehat{A}\widehat{A}^{\dagger})\widehat{A}_{0}(I-\widehat{A}^{\dagger}\widehat{A})=0.$

    Furthermore, if the HDMPGI of $\tilde{A}$ exists, then $$\tilde{A}^{\dagger}=\widehat{A}^{\dagger}+\epsilon_2[-\widehat{A}^{\dagger}\widehat{A}_0\widehat{A}^{\dagger}+ (\widehat{A}^{T}\widehat{A})^{\dagger}\widehat{A}^{T}_0(I-\widehat{A}\widehat{A}^{\dagger})+(I-\widehat{A}\widehat{A}^{\dagger})\widehat{A}^{T}_0(\widehat{A}^{T}\widehat{A})^{\dagger}].$$
\end{enumerate}
\end{theorem}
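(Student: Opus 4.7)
The plan is to represent the candidate hyper-dual inverse as $\tilde{X}=\widehat{X}+\epsilon^{*}\widehat{X}_{0}$ and to expand each of the four hyper-dual Moore--Penrose equations
\begin{equation*}
\tilde{A}\tilde{X}\tilde{A}=\tilde{A},\quad \tilde{X}\tilde{A}\tilde{X}=\tilde{X},\quad (\tilde{A}\tilde{X})^{T}=\tilde{A}\tilde{X},\quad (\tilde{X}\tilde{A})^{T}=\tilde{X}\tilde{A}
\end{equation*}
in powers of $\epsilon^{*}$, using $(\epsilon^{*})^{2}=0$ and the fact that transposition acts coordinatewise in the $\epsilon^{*}$-decomposition. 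Each identity then splits into a primal equation, which is precisely the corresponding DMPGI identity for the pair $(\widehat{A},\widehat{X})$, and a ``hyper-dual'' equation involving $\widehat{X}_{0}$ and $\widehat{A}_{0}$. The primal equations force $\widehat{X}=\widehat{A}^{\dagger}$, so the existence of the DMPGI of $\widehat{A}$ is built into (i), giving the first half of (ii).

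To extract the projector condition in the necessity direction, I would focus on the hyper-dual part of the first equation,
\begin{equation*}
\widehat{A}\widehat{A}^{\dagger}\widehat{A}_{0}+\widehat{A}\widehat{X}_{0}\widehat{A}+\widehat{A}_{0}\widehat{A}^{\dagger}\widehat{A}=\widehat{A}_{0},
\end{equation*}
and multiply on the left by $I-\widehat{A}\widehat{A}^{\dagger}$ and on the right by $I-\widehat{A}^{\dagger}\widehat{A}$. The first and third terms on the left carry an absorbing $\widehat{A}\widehat{A}^{\dagger}$ or $\widehat{A}^{\dagger}\widehat{A}$ factor adjacent to one of the projectors, while the middle term dies because $(I-\widehat{A}\widehat{A}^{\dagger})\widehat{A}=0=\widehat{A}(I-\widehat{A}^{\dagger}\widehat{A})$. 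This leaves exactly $(I-\widehat{A}\widehat{A}^{\dagger})\widehat{A}_{0}(I-\widehat{A}^{\dagger}\widehat{A})=0$, completing (i)$\Rightarrow$(ii).

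For the converse, I would take the explicit candidate
\begin{equation*}
\widehat{X}_{0}:=-\widehat{A}^{\dagger}\widehat{A}_{0}\widehat{A}^{\dagger}+(\widehat{A}^{T}\widehat{A})^{\dagger}\widehat{A}_{0}^{T}(I-\widehat{A}\widehat{A}^{\dagger})+(I-\widehat{A}\widehat{A}^{\dagger})\widehat{A}_{0}^{T}(\widehat{A}^{T}\widehat{A})^{\dagger}
\end{equation*}
and substitute $\tilde{X}=\widehat{A}^{\dagger}+\epsilon^{*}\widehat{X}_{0}$ into each of the four hyper-dual Moore--Penrose equations. The verification rests on the standard identity $(\widehat{A}^{T}\widehat{A})^{\dagger}\widehat{A}^{T}=\widehat{A}^{\dagger}$ (and its transpose), the absorbing laws $\widehat{A}\widehat{A}^{\dagger}\widehat{A}=\widehat{A}$ and $\widehat{A}^{\dagger}\widehat{A}\widehat{A}^{\dagger}=\widehat{A}^{\dagger}$, and the self-adjointness of $\widehat{A}\widehat{A}^{\dagger}$ and $\widehat{A}^{\dagger}\widehat{A}$, all of which survive the passage from real to dual coefficients once $\widehat{A}^{\dagger}$ is known to exist. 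The hypothesis $(I-\widehat{A}\widehat{A}^{\dagger})\widehat{A}_{0}(I-\widehat{A}^{\dagger}\widehat{A})=0$ enters precisely as the obstruction needed to close the first equation after the cross terms cancel; uniqueness then follows by the usual Moore--Penrose argument applied layer by layer in $\epsilon^{*}$.

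The chief obstacle will be the fourfold verification in the converse direction: substituting the three-term ansatz for $\widehat{X}_{0}$ into each of the four equations produces a large number of cross terms, and keeping the bookkeeping clean demands repeated, disciplined use of the identities above together with the rank-type hypothesis in (ii). No single deep idea is missing, but this is where all the accounting has to land correctly.
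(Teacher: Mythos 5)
The paper itself gives no proof of this statement: it is imported verbatim from Xiao and Zhong \cite{hyper} as a preliminary, so there is no in-paper argument to compare against (the closest analogue is the proof of Theorem \ref{th2.4} for the HDGGI, which works through the block canonical form of Theorem \ref{th2.1} rather than through direct projector manipulations). Your overall strategy is nonetheless the standard one, and your necessity direction is sound: splitting the four Penrose equations into primal and $\epsilon^*$ parts forces $\widehat{X}=\widehat{A}^{\dagger}$, and sandwiching the $\epsilon^*$-part of $\tilde{A}\tilde{X}\tilde{A}=\tilde{A}$ between $I-\widehat{A}\widehat{A}^{\dagger}$ and $I-\widehat{A}^{\dagger}\widehat{A}$ does kill all three left-hand terms and leave the projector condition.

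The gap is in the converse, where you commit to verifying the displayed candidate literally and assert that the bookkeeping closes. It does not. For the $\epsilon^*$-part of the first equation you need $\widehat{A}\widehat{X}_0\widehat{A}=-\widehat{A}\widehat{A}^{\dagger}\widehat{A}_0\widehat{A}^{\dagger}\widehat{A}$ (modulo the hypothesis), and while the second summand of $\widehat{X}_0$ is annihilated because $(I-\widehat{A}\widehat{A}^{\dagger})\widehat{A}=0$, the third summand contributes $\widehat{A}(I-\widehat{A}\widehat{A}^{\dagger})\widehat{A}_0^{T}(\widehat{A}^{T}\widehat{A})^{\dagger}\widehat{A}$, and $\widehat{A}(I-\widehat{A}\widehat{A}^{\dagger})$ is \emph{not} zero in general --- only $(I-\widehat{A}\widehat{A}^{\dagger})\widehat{A}=0$ and $\widehat{A}(I-\widehat{A}^{\dagger}\widehat{A})=0$ are absorption laws; already $A=\begin{bmatrix}0&1\\0&0\end{bmatrix}$ gives $A(I-AA^{\dagger})=A\neq 0$, and one can choose $A_0$ satisfying the existence condition for which the whole term survives. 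The third summand should be the Golub--Pereyra term $(I-\widehat{A}^{\dagger}\widehat{A})\widehat{A}_0^{T}(\widehat{A}\widehat{A}^{T})^{\dagger}$; the formula as printed in the statement appears to be a transcription error, but a proof that takes it at face value inherits the error and fails at exactly the step you wave through. With that correction --- and a short argument, which you assert but do not supply, that $(\widehat{A}^{T}\widehat{A})^{\dagger}=\widehat{A}^{\dagger}(\widehat{A}^{\dagger})^{T}$ and $(\widehat{A}\widehat{A}^{T})^{\dagger}=(\widehat{A}^{\dagger})^{T}\widehat{A}^{\dagger}$ exist as dual matrices once $\widehat{A}^{\dagger}$ does --- your fourfold verification would close.
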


The above result reduces to the following corollary under a special case.
\begin{corollary}\label{cor2.3}
    Let $\tilde{A} = \widehat{A} + \epsilon_2 \widehat{A}_0$ be a hyper-dual matrix. If $\tilde{A}^{\dagger}$ exists and $\widehat{A}\widehat{A}^{\dagger}\widehat{A}_0^T=\widehat{A}_0^T\widehat{A}\widehat{A}^{\dagger}=\widehat{A}_0^T$, then
$$\tilde{A}^{\dagger}=\widehat{A}^{\dagger}-\epsilon_2 \widehat{A}^{\dagger}\widehat{A}_0\widehat{A}^{\dagger}.$$
\end{corollary}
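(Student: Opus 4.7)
The plan is to derive the stated formula as a direct specialization of Theorem \ref{thm2.2}, so the work reduces to showing that the two bracketed terms involving $(I-\widehat{A}\widehat{A}^{\dagger})$ collapse to zero under the given hypotheses.

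First, I would invoke Theorem \ref{thm2.2} to write
\begin{equation*}
\tilde{A}^{\dagger}=\widehat{A}^{\dagger}+\epsilon^*\bigl[-\widehat{A}^{\dagger}\widehat{A}_0\widehat{A}^{\dagger}+ (\widehat{A}^{T}\widehat{A})^{\dagger}\widehat{A}^{T}_0(I-\widehat{A}\widehat{A}^{\dagger})+(I-\widehat{A}\widehat{A}^{\dagger})\widehat{A}^{T}_0(\widehat{A}^{T}\widehat{A})^{\dagger}\bigr],
\end{equation*}
which is available to us because $\tilde{A}^{\dagger}$ is assumed to exist. The target identity drops two of the three terms in the square bracket, so everything hinges on using the two hypotheses to annihilate the projector factors $(I-\widehat{A}\widehat{A}^{\dagger})$ that appear there.

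Next, I would rewrite the assumptions in the convenient projector form. The hypothesis $\widehat{A}_0^T\widehat{A}\widehat{A}^{\dagger}=\widehat{A}_0^T$ is equivalent to $\widehat{A}_0^T(I-\widehat{A}\widehat{A}^{\dagger})=0$, and substituting this directly into the second bracketed summand gives $(\widehat{A}^{T}\widehat{A})^{\dagger}\widehat{A}^{T}_0(I-\widehat{A}\widehat{A}^{\dagger})=0$. Symmetrically, the hypothesis $\widehat{A}\widehat{A}^{\dagger}\widehat{A}_0^T=\widehat{A}_0^T$ is equivalent to $(I-\widehat{A}\widehat{A}^{\dagger})\widehat{A}_0^T=0$, and plugging this into the third summand yields $(I-\widehat{A}\widehat{A}^{\dagger})\widehat{A}^{T}_0(\widehat{A}^{T}\widehat{A})^{\dagger}=0$. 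Substituting back, only the first bracketed term survives and we obtain $\tilde{A}^{\dagger}=\widehat{A}^{\dagger}-\epsilon^{*}\widehat{A}^{\dagger}\widehat{A}_0\widehat{A}^{\dagger}$, matching the claimed formula (the $\epsilon$ in the statement appears to be a typographical slip for $\epsilon^{*}$).

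There is no real obstacle here; the only thing to verify carefully is the equivalence between the hypothesis in the form $\widehat{A}\widehat{A}^{\dagger}\widehat{A}_0^T=\widehat{A}_0^T$ and the form $(I-\widehat{A}\widehat{A}^{\dagger})\widehat{A}_0^T=0$, which is immediate once we subtract $\widehat{A}_0^T$ from both sides. The definition of the DMPGI guarantees that $\widehat{A}\widehat{A}^{\dagger}$ is a genuine projector, so no symmetry or transpose manipulation is even needed to carry out the two cancellations.
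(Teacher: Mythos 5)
Your proposal is correct and is exactly the argument the paper intends: the corollary is presented as a direct specialization of Theorem \ref{thm2.2}, and your two observations --- that $\widehat{A}_0^T\widehat{A}\widehat{A}^{\dagger}=\widehat{A}_0^T$ kills the term $(\widehat{A}^{T}\widehat{A})^{\dagger}\widehat{A}^{T}_0(I-\widehat{A}\widehat{A}^{\dagger})$ and that $\widehat{A}\widehat{A}^{\dagger}\widehat{A}_0^T=\widehat{A}_0^T$ kills $(I-\widehat{A}\widehat{A}^{\dagger})\widehat{A}^{T}_0(\widehat{A}^{T}\widehat{A})^{\dagger}$ --- are precisely the reductions needed. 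You are also right that the $\epsilon$ in the stated formula should read $\epsilon^{*}$.
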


\section{Hyper-dual generalized group inverse}\label{rm2}

In this section, we provide characterization and existence criteria for the hyper-dual group generalized inverse of a hyper-dual matrix. The following lemma uses the equations 
\begin{equation*}
\tilde{A}\tilde{X}\tilde{A}=\tilde{A}, \tilde{X}\tilde{A}\tilde{X}=\tilde{X}, \text{ and } \tilde{A}\tilde{X}=\tilde{X}\tilde{A}
\end{equation*}
and can be easily obtained.

\begin{lemma}\label{lem2.2}
    Let $\tilde{A}=\widehat{A}+\epsilon_2 \widehat{A}_{0}$ be a hyper-dual matrix such that the dual index of $\widehat{A}$ is 1. Then, a hyper-dual matrix $\tilde{X}=\widehat{X}+\epsilon_2 \widehat{X}_{0}$ is the HDGGI of $\tilde{A}$ if and only if  $$\widehat{X}=\widehat{A}^{\#}$$
    and   
    \begin{align*}
          &\widehat{A}\widehat{X}\widehat{A}_{0}+\widehat{A}\widehat{X}_{0}\widehat{A}+\widehat{A}_{0}\widehat{X}\widehat{A}=\widehat{A}_{0},\\
          &\widehat{X}\widehat{A}\widehat{X}_{0}+\widehat{X}\widehat{A}_{0}\widehat{X}+\widehat{X}_{0}\widehat{A}\widehat{X}=\widehat{X}_{0},\\
          &\widehat{A}\widehat{X}_{0}+\widehat{A}_{0}\widehat{X}=\widehat{X}\widehat{A}_{0}+\widehat{X}_{0}\widehat{A}.
    \end{align*}
\end{lemma}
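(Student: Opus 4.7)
\medskip
\noindent\textbf{Proof proposal.} The plan is to expand the three defining equations of the HDGGI,
$$\tilde{A}\tilde{X}\tilde{A}=\tilde{A},\qquad \tilde{X}\tilde{A}\tilde{X}=\tilde{X},\qquad \tilde{A}\tilde{X}=\tilde{X}\tilde{A},$$
after substituting $\tilde{A}=\widehat{A}+\epsilon^*\widehat{A}_0$ and $\tilde{X}=\widehat{X}+\epsilon^*\widehat{X}_0$, using the relation $(\epsilon^*)^2=0$, and then separate each equation into its primal part (the terms independent of $\epsilon^*$) and its hyper-dual part (the coefficient of $\epsilon^*$). Since both parts take values in $\widehat{\mathbb{R}}^{n\times n}$, the equality in $\widetilde{\mathbb{R}}^{n\times n}$ is equivalent to the conjunction of two equalities in $\widehat{\mathbb{R}}^{n\times n}$.

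First I would compute $\tilde{A}\tilde{X}=\widehat{A}\widehat{X}+\epsilon^*(\widehat{A}\widehat{X}_0+\widehat{A}_0\widehat{X})$ and, multiplying once more by $\tilde{A}$, obtain
$$\tilde{A}\tilde{X}\tilde{A}=\widehat{A}\widehat{X}\widehat{A}+\epsilon^*\bigl(\widehat{A}\widehat{X}\widehat{A}_0+\widehat{A}\widehat{X}_0\widehat{A}+\widehat{A}_0\widehat{X}\widehat{A}\bigr).$$
Equating with $\tilde{A}=\widehat{A}+\epsilon^*\widehat{A}_0$ yields the primal identity $\widehat{A}\widehat{X}\widehat{A}=\widehat{A}$ together with the first displayed hyper-dual identity of the lemma. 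The same mechanical expansion applied to $\tilde{X}\tilde{A}\tilde{X}$ and to $\tilde{A}\tilde{X}-\tilde{X}\tilde{A}$ produces, respectively, the primal identities $\widehat{X}\widehat{A}\widehat{X}=\widehat{X}$ and $\widehat{A}\widehat{X}=\widehat{X}\widehat{A}$, together with the second and third hyper-dual identities stated in the lemma. Thus the system defining $\tilde{X}$ as the HDGGI of $\tilde{A}$ is equivalent to the combined system consisting of the three primal DGGI equations for $(\widehat{A},\widehat{X})$ together with the three listed hyper-dual equations in $(\widehat{X},\widehat{X}_0)$.

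It then remains to identify the primal component. The three primal equations say precisely that $\widehat{X}$ is a dual group inverse of $\widehat{A}$. By hypothesis the dual index of $\widehat{A}$ is $1$, so Theorem \ref{dualindex1} guarantees that $\widehat{A}^{\#}$ exists, and by the uniqueness of the DGGI one concludes $\widehat{X}=\widehat{A}^{\#}$. Substituting this back leaves exactly the three hyper-dual equations appearing in the statement, which gives the stated characterization. The converse direction is just the reverse substitution: assuming $\widehat{X}=\widehat{A}^{\#}$ and the three hyper-dual equations, the same expansion shows that $\tilde{X}$ satisfies the three defining relations of the HDGGI.

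I do not expect any serious obstacle: the argument is essentially a careful bookkeeping of coefficients of $\epsilon^*$, with the only non-trivial input being the uniqueness of the dual group inverse under the index-$1$ hypothesis. The main care needed is to keep the order of factors correct, since the matrix products do not commute, and to verify that no $(\epsilon^*)^2$-term is silently being dropped in a place where it would matter (it does not, because $(\epsilon^*)^2=0$).
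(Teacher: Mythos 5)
Your proposal is correct and follows essentially the same route as the paper: expand the three HDGGI equations, split into primal and $\epsilon^*$ parts, identify the primal system as the DGGI equations for $\widehat{A}$, and invoke existence (from the dual index being $1$) and uniqueness of the dual group inverse to get $\widehat{X}=\widehat{A}^{\#}$. No gaps; your explicit mention of uniqueness of the DGGI is a point the paper leaves implicit.
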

\begin{proof}
    Since the dual index of the dual matrix $\widehat{A}$ is 1, it follows from Theorem \ref{dualindex1} that $\widehat{A}^{\#}$ exists. If $\tilde{X}=\widehat{X}+\epsilon_2 \widehat{X}_{0}$ is the HDGGI of $\tilde{A}$, then $\tilde{A}$ and $\tilde{X}$ must satisfy the equations: 
\begin{equation*}
\tilde{A}\tilde{X}\tilde{A}=\tilde{A}, \tilde{X}\tilde{A}\tilde{X}=\tilde{X}, \text{ and } \tilde{A}\tilde{X}=\tilde{X}\tilde{A}.
\end{equation*}
After some simplifications, comparing the primal parts of the equations, we get
$$\widehat{A}\widehat{X}\widehat{A}=\widehat{A}, \widehat{X}\widehat{A}\widehat{X}=\widehat{X}, \text{ and } \widehat{A}\widehat{X}=\widehat{X}\widehat{A},$$
which shows that $\widehat{X}=\widehat{A}^{\#}$. Similarly, the other equations are obtained by comparing the dual parts of the equations. The converse part can be easily proved.
\end{proof}

 Zhong and Zhang \cite{dualgroup} shown that the group inverse of a dual matrix is unique whenever it exists. We next prove that the hyper-dual group generalized inverse of a hyper-dual matrix is unique whenever it exists. 

\begin{theorem}\label{thm3.2}
    Let $\tilde{A}=\widehat{A}+\epsilon_2\widehat{A}_{0}$ be a hyper-dual matrix such that the dual index of $\widehat{A}$ is 1. If the hyper-dual group generalized inverse of $\tilde{A}$ exists, then it is unique.
\end{theorem}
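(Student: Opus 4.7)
The plan is to exploit Lemma \ref{lem2.2} to reduce the uniqueness question for $\tilde{A}$ to a question purely in terms of its primal and hyper-dual components. Suppose $\tilde{X}=\widehat{X}+\epsilon^*\widehat{X}_0$ and $\tilde{Y}=\widehat{Y}+\epsilon^*\widehat{Y}_0$ are both HDGGIs of $\tilde{A}$. By Lemma \ref{lem2.2}, the primal parts automatically agree, since $\widehat{X}=\widehat{A}^{\#}=\widehat{Y}$ (using that the dual group inverse is unique whenever it exists, per Theorem \ref{th2.1}). So the whole content of the theorem lies in showing that the hyper-dual parts agree, i.e.\ $\widehat{Z}:=\widehat{X}_0-\widehat{Y}_0=0$.

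Next I would subtract the three hyper-dual equations from Lemma \ref{lem2.2} written for $\widehat{X}_0$ and for $\widehat{Y}_0$. Since the terms involving only $\widehat{A}$, $\widehat{A}^{\#}$, and $\widehat{A}_0$ cancel, this leaves the homogeneous system
\begin{align*}
&\widehat{A}\,\widehat{Z}\,\widehat{A}=0,\\
&\widehat{A}^{\#}\widehat{A}\,\widehat{Z}+\widehat{Z}\,\widehat{A}\widehat{A}^{\#}=\widehat{Z},\\
&\widehat{A}\,\widehat{Z}=\widehat{Z}\,\widehat{A}.
\end{align*}
The objective is to force $\widehat{Z}=0$ from this system.

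The key calculation is short: the commutativity relation $\widehat{A}\widehat{Z}=\widehat{Z}\widehat{A}$ implies $\widehat{A}^2\widehat{Z}=\widehat{A}\widehat{Z}\widehat{A}=0$, and similarly $\widehat{Z}\widehat{A}^2=0$. Multiplying on the left (respectively right) by $\widehat{A}^{\#}$ and using the group-inverse identity $\widehat{A}^{\#}\widehat{A}^2=\widehat{A}\widehat{A}^{\#}\widehat{A}=\widehat{A}$ (and its mirror image $\widehat{A}^2\widehat{A}^{\#}=\widehat{A}$), I would deduce $\widehat{A}\widehat{Z}=0$ and $\widehat{Z}\widehat{A}=0$. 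Plugging these into the second equation $\widehat{A}^{\#}\widehat{A}\widehat{Z}+\widehat{Z}\widehat{A}\widehat{A}^{\#}=\widehat{Z}$ immediately gives $\widehat{Z}=0$, as both terms on the left are products of $\widehat{A}\widehat{Z}$ or $\widehat{Z}\widehat{A}$ with $\widehat{A}^{\#}$.

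I do not anticipate a serious obstacle here: the only subtlety is making sure that multiplying equations at the level of dual matrices (not real matrices) by $\widehat{A}^{\#}$ is legitimate, which it is since all of $\widehat{A}, \widehat{A}^{\#}, \widehat{Z}$ live in $\widehat{\mathbb{R}}^{n\times n}$ and the standard group-inverse identities hold there. The only place that one must be careful is recording which identities for $\widehat{A}^{\#}$ are being invoked — specifically $\widehat{A}\widehat{A}^{\#}=\widehat{A}^{\#}\widehat{A}$ and $\widehat{A}\widehat{A}^{\#}\widehat{A}=\widehat{A}$ — both of which are part of the definition of the dual group inverse and are available thanks to $\mathrm{Ind}(\widehat{A})=1$.
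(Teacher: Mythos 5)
Your argument is correct and is essentially the paper's own proof: both reduce via Lemma \ref{lem2.2} to the homogeneous system $\widehat{A}\widehat{Z}\widehat{A}=0$, $\widehat{A}^{\#}\widehat{A}\widehat{Z}+\widehat{Z}\widehat{A}\widehat{A}^{\#}=\widehat{Z}$, $\widehat{A}\widehat{Z}=\widehat{Z}\widehat{A}$ for the difference $\widehat{Z}$ of the hyper-dual parts, and then use commutativity together with $\widehat{A}^{\#}\widehat{A}^{2}=\widehat{A}$ to force $\widehat{A}\widehat{Z}=\widehat{Z}\widehat{A}=0$ and hence $\widehat{Z}=0$. The only cosmetic difference is the order of operations (you commute first, then multiply by $\widehat{A}^{\#}$; the paper post-multiplies by $\widehat{A}^{\#}$ first), which changes nothing.
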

 \begin{proof}
     According to the Lemma \ref{lem2.2}, if the hyper-dual group inverse of $\tilde{A}=\widehat{A}+\epsilon_2\widehat{A}_{0}$ exists, then it is of the form $\widehat{A}^\#+\epsilon_2 \widehat{C}$. Let $\tilde{Y}_{1}=\widehat{A}^{\#}+\epsilon_2\widehat{C}_{1}$ and $\tilde{Y}_{2}=\widehat{A}^\#+\epsilon_2\widehat{C}_{2}$ be two hyper-dual group inverses of $\tilde{A}$. To establish the uniqueness of $\tilde{A}^\#$, it suffices to show that $\widehat{C}_{1}=\widehat{C}_{2}$.
     Equating the dual part of $\tilde{A}\tilde{Y}_{1}\tilde{A}=\tilde{A}$ and $\tilde{A}\tilde{Y}_{2}\tilde{A}=\tilde{A}$, we get 
     \begin{equation}\label{eq3.1}
         \widehat{A}\widehat{A}^\#\widehat{A}_{0}+\widehat{A}\widehat{C}_{1}\widehat{A}+\widehat{A}_{0}\widehat{A}^\#\widehat{A}=\widehat{A}_{0}
      \end{equation}   
      and 
      \begin{equation}\label{eq3.2}
        \widehat{A}\widehat{A}^\#\widehat{A}_{0}+\widehat{A}\widehat{C}_{2}\widehat{A}+\widehat{A}_{0}\widehat{A}^\#\widehat{A}=\widehat{A}_{0}.
      \end{equation}
      From equations \eqref{eq3.1} and \eqref{eq3.2}, we have
      \begin{equation}\label{eq3.3}
          \widehat{A}(\widehat{C}_{1}-\widehat{C}_{2})\widehat{A}=0.
      \end{equation}
      Similarly, equating the dual part of $\tilde{Y}_{1}\tilde{A}\tilde{Y}_{1}=\tilde{Y}_{1}$ and $\tilde{Y}_{2}\tilde{A}\tilde{Y}_{2}=\tilde{Y}_{2}$, we get
      \begin{equation}\label{eq3.4}
          \widehat{A}^\#\widehat{A}\widehat{C}_{1}+\widehat{A}^\#\widehat{A}_{0}\widehat{A}^\#+\widehat{C}_{1}\widehat{A}\widehat{A}^\#=\widehat{C}_{1}
      \end{equation}
     and 
     \begin{equation}\label{eq3.5}
\widehat{A}^\#\widehat{A}\widehat{C}_{2}+\widehat{A}^\#\widehat{A}_{0}\widehat{A}^\#+\widehat{C}_{2}\widehat{A}\widehat{A}^\#=\widehat{C}_{2}.
      \end{equation}
      Subtracting equation \eqref{eq3.5} from \eqref{eq3.4}, we get
      \begin{equation}\label{eq3.6}
          \widehat{C}_{1}-\widehat{C}_{2}=\widehat{A}^\#\widehat{A}(\widehat{C}_{1}-\widehat{C}_{2})+(\widehat{C}_{1}-\widehat{C}_{2})\widehat{A}\widehat{A}^{\#}.
      \end{equation}
      Now, equating the dual part of $\tilde{A}\tilde{Y}_{1}=\tilde{Y}\tilde{A}_{1}$ and $\tilde{A}\tilde{Y}_{2}=\tilde{Y}_{2}\tilde{A}$, we obtain
      \begin{equation}\label{eq3.7}
          \widehat{A}\widehat{C}_{1}+\widehat{A}_{0}\widehat{A}^\#=\widehat{C}_{1}\widehat{A}+\widehat{A}^{\#}\widehat{A}_{0}
      \end{equation}
      and 
      \begin{equation}\label{eq3.8}
          \widehat{A}\widehat{C}_{2}+\widehat{A}_{0}\widehat{A}^\#=\widehat{C}_{2}\widehat{A}+\widehat{A}^{\#}\widehat{A}_{0}.
      \end{equation}
      From equations \eqref{eq3.7} and \eqref{eq3.8}, we have
      \begin{equation}\label{eq3.9}
          \widehat{A}(\widehat{C}_{1}-\widehat{C}_{2})=(\widehat{C}_{1}-\widehat{C}_{2})\widehat{A}.
      \end{equation}
      Now, post-multiplying equation \eqref{eq3.3} by $\widehat{A}^{\#}$ gives $\widehat{A}(\widehat{C}_{1}-\widehat{C}_{2})\widehat{A}\widehat{A}^{\#}=0$. Thus, it follows from equation \eqref{eq3.9} that 
      $0=\widehat{A}(\widehat{C}_{1}-\widehat{C}_{2})\widehat{A}\widehat{A}^{\#}=(\widehat{C}_{1}-\widehat{C}_{2})\widehat{A}\widehat{A}\widehat{A}^{\#}=(\widehat{C}_{1}-\widehat{C}_{2})\widehat{A}=\widehat{A}(\widehat{C}_{1}-\widehat{C}_{2})$, which implies from \eqref{eq3.6}
      $$ \widehat{C}_{1}-\widehat{C}_{2}=\widehat{A}^\#\widehat{A}(\widehat{C}_{1}-\widehat{C}_{2})+(\widehat{C}_{1}-\widehat{C}_{2})\widehat{A}\widehat{A}^{\#}=0,$$
      i.e, $\widehat{C}_{1}=\widehat{C}_{2}$.
 \end{proof}

We next provide a characterization for the existence of the group inverse of a hyper-dual matrix and present a compact formula for its computation. 
\begin{theorem}
    \label{th2.4}
    Let $\tilde{A}=\widehat{A}+\epsilon_2\widehat{A}_{0}=A_{0}+\epsilon_1 A_{1}+\epsilon_2A_{2}+\epsilon_1 \epsilon_2A_{3}$ be such that the dual index of $\widehat{A}$ is 1. Then, the following statements are equivalent:
    \begin{enumerate}[(i)]
    
        \item The HDGGI of $\tilde{A}$ exists;
        \item $\tilde{A}=P\begin{bmatrix}
            C & 0\\
            0 & 0
        \end{bmatrix}P^{-1}+\epsilon_1 P\begin{bmatrix}
            B_{1} & B_{2}\\
            B_{3} & 0
        \end{bmatrix}P^{-1}+\epsilon_2\Bigg(P\begin{bmatrix}
            Y_{1} & Y_{2}\\
            Y_{3} & 0
        \end{bmatrix}P^{-1}+\epsilon_1 P\begin{bmatrix}
            Z_{1} & Z_{2}\\
            Z_{3} & Z_{4}
        \end{bmatrix}P^{-1}\Bigg)$,
        where $P$ and $C$ are nonsingular matrices of orders n and r, respectively, and $B_{{i}'}s$, $Y_{{i}'}s$, $Z_{{i}'}s$ are real matrices of appropriate sizes that satisfy
            $$Z_{4}=B_{3}C^{-1}Y_{2}+Y_{3}C^{-1}B_{2};$$
        \item $(I-\widehat{A}\widehat{A}^{\#})\widehat{A}_{0}(I-\widehat{A}\widehat{A}^{\#})=0;$  
        \item  \begin{align*}
            (I-A_{0}A_{0}^{\#})A_{1}(I-A_{0}A_{0}^{\#})&= (I-A_{0}A_{0}^{\#})A_{2}(I-A_{0}A_{0}^{\#})\\
                                                       &= A_{3}-A_{1}A_{0}^{\#}A_{2}-A_{2}A_{0}^{\#}A_{1}=0. 
                                                   \end{align*}
        Furthermore, if the hyper-dual group inverse of 
        $\tilde{A}$ exists, then 
        $$\tilde{A}^{\#}=\widehat{A}^{\#}+\epsilon_2(-\widehat{A}^{\#}\widehat{A}_{0}\widehat{A}^{\#}+(\widehat{A}^{\#})^{2}\widehat{A}_{0}(I-\widehat{A}\widehat{A}^{\#})+(I-\widehat{A}\widehat{A}^{\#})\widehat{A}_{0}(\widehat{A}^{\#})^{2}).$$
    \end{enumerate}
\end{theorem}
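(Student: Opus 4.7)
The plan is to structure the proof around Lemma \ref{lem2.2}, which forces $\widehat{X} = \widehat{A}^{\#}$ in any HDGGI and reduces the problem to finding a dual matrix $\widehat{X}_0$ satisfying three coupled equations. Once that framework is in place, I would prove the circular implications (i)$\Leftrightarrow$(iii)$\Leftrightarrow$(ii) and (iii)$\Leftrightarrow$(iv). Note that by Theorem \ref{dualindex1} the assumption on the dual index ensures $\widehat{A}^{\#}$ already exists, which is what makes Lemma \ref{lem2.2} applicable.

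For (i)$\Rightarrow$(iii): take the first equation of Lemma \ref{lem2.2},
$\widehat{A}\widehat{A}^{\#}\widehat{A}_{0}+\widehat{A}\widehat{X}_{0}\widehat{A}+\widehat{A}_{0}\widehat{A}^{\#}\widehat{A}=\widehat{A}_{0},$
and sandwich it between $(I-\widehat{A}\widehat{A}^{\#})$ on both sides. Using $(I-\widehat{A}\widehat{A}^{\#})\widehat{A}=\widehat{A}(I-\widehat{A}\widehat{A}^{\#})=0$ (which follows from $\widehat{A}\widehat{A}^{\#}\widehat{A}=\widehat{A}$ and commutativity in $\widehat{A}^{\#}$), the three terms on the left collapse to $0$, giving (iii). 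For (iii)$\Rightarrow$(i), I would define $\widehat{X}_{0}$ by the displayed formula
$\widehat{X}_{0}=-\widehat{A}^{\#}\widehat{A}_{0}\widehat{A}^{\#}+(\widehat{A}^{\#})^{2}\widehat{A}_{0}(I-\widehat{A}\widehat{A}^{\#})+(I-\widehat{A}\widehat{A}^{\#})\widehat{A}_{0}(\widehat{A}^{\#})^{2}$
and substitute into each of the three equations in Lemma \ref{lem2.2}. The cross terms telescope once one uses $\widehat{A}\widehat{A}^{\#}\widehat{A}=\widehat{A}$, $\widehat{A}\widehat{A}^{\#}=\widehat{A}^{\#}\widehat{A}$, $\widehat{A}(\widehat{A}^{\#})^{2}=\widehat{A}^{\#}$, and condition (iii); the final remainder is exactly the left-hand side of the respective defining equation.

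For (i)$\Leftrightarrow$(ii) I would use the canonical block form of $\widehat{A}$ from Theorem \ref{th2.1} and write the ambient $\widehat{A}_{0}$ in the same $P$-basis with unknown blocks $Y_{1},Y_{2},Y_{3},Y_{4}$ (primal) and $Z_{1},\dots,Z_{4}$ (dual). A direct computation gives
$I-\widehat{A}\widehat{A}^{\#}=P\begin{bmatrix} 0 & 0 \\ 0 & I \end{bmatrix}P^{-1}-\epsilon P\begin{bmatrix} 0 & C^{-1}B_{2} \\ B_{3}C^{-1} & 0 \end{bmatrix}P^{-1}.$
Substituting into (iii) and separating primal and dual parts forces $Y_{4}=0$ and $Z_{4}=B_{3}C^{-1}Y_{2}+Y_{3}C^{-1}B_{2}$, reproducing (ii).

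Finally, (iii)$\Leftrightarrow$(iv) is obtained by expanding the dual equation $(I-\widehat{A}\widehat{A}^{\#})\widehat{A}_{0}(I-\widehat{A}\widehat{A}^{\#})=0$ along powers of $\epsilon$, using the Theorem \ref{th2.1} formula for $\widehat{A}^{\#}$ with $\widehat{A}=A_{0}+\epsilon A_{1}$. A short calculation yields $\widehat{A}\widehat{A}^{\#}=A_{0}A_{0}^{\#}+\epsilon[(I-A_{0}A_{0}^{\#})A_{1}A_{0}^{\#}+A_{0}^{\#}A_{1}(I-A_{0}A_{0}^{\#})]$; combining with Theorem \ref{dualindex2} (which provides $(I-A_{0}A_{0}^{\#})A_{1}(I-A_{0}A_{0}^{\#})=0$) and equating primal/dual coefficients produces the two remaining relations in (iv). The main obstacle will be this last computation: tracking how the inner factors $(I-A_{0}A_{0}^{\#})A_{1}A_{0}^{\#}$ and $A_{0}^{\#}A_{1}(I-A_{0}A_{0}^{\#})$ collapse after being flanked by $(I-A_{0}A_{0}^{\#})$ requires repeated use of the group-inverse identities $A_{0}A_{0}^{\#}A_{0}=A_{0}$ and $A_{0}^{\#}A_{0}A_{0}^{\#}=A_{0}^{\#}$ together with the Theorem \ref{dualindex2} hypothesis; the verification of the explicit formula in step (iii)$\Rightarrow$(i) is similarly bookkeeping-heavy but mechanical.
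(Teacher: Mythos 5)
Your proposal is correct and reaches the same conclusions, but it routes the argument differently from the paper. The paper proves the cycle (i)$\Rightarrow$(ii)$\Rightarrow$(iii)$\Rightarrow$(i) entirely through block-matrix computations in the basis $P$: it writes $\widehat{A}_0$ and $\widehat{X}_0$ with unknown blocks, expands $\widehat{A}\widehat{A}^{\#}\widehat{A}_0+\widehat{A}\widehat{X}_0\widehat{A}+\widehat{A}_0\widehat{A}^{\#}\widehat{A}$ blockwise to extract $Y_4=0$ and $Z_4=B_3C^{-1}Y_2+Y_3C^{-1}B_2$, and then verifies the candidate $\tilde{G}$ block by block. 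You instead prove (i)$\Leftrightarrow$(iii) abstractly: sandwiching the first Lemma \ref{lem2.2} equation between $I-\widehat{A}\widehat{A}^{\#}$ kills all three left-hand terms via $(I-\widehat{A}\widehat{A}^{\#})\widehat{A}=\widehat{A}(I-\widehat{A}\widehat{A}^{\#})=0$ and $\widehat{A}^{\#}\widehat{A}(I-\widehat{A}\widehat{A}^{\#})=0$, and the converse is a coordinate-free check that the displayed $\widehat{X}_0$ satisfies the three equations of Lemma \ref{lem2.2} (I verified this goes through; only the first equation actually needs hypothesis (iii), via $\widehat{A}_0-(E\widehat{A}_0+\widehat{A}_0E-E\widehat{A}_0E)=(I-E)\widehat{A}_0(I-E)$ with $E=\widehat{A}\widehat{A}^{\#}$). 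This confines the block computation to where it is genuinely needed, namely the equivalence with the canonical form (ii), and is cleaner than the paper's version; it also delivers the ``furthermore'' formula for free. Your treatment of (iii)$\Leftrightarrow$(iv) is the same expansion in powers of $\epsilon$ that the paper uses; note that both you and the paper pass from $(I-A_0A_0^{\#})(A_3-A_1A_0^{\#}A_2-A_2A_0^{\#}A_1)(I-A_0A_0^{\#})=0$ (which is what the $\epsilon$-coefficient actually yields) to the unsandwiched identity $A_3-A_1A_0^{\#}A_2-A_2A_0^{\#}A_1=0$ without justification --- that jump is inherited from the paper's own proof rather than introduced by you, but if you write this up you should either justify it or weaken (iv) accordingly.
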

\begin{proof}
    $(i) \implies (ii):$\\ Suppose that the HDGGI of $$\tilde{A}=\widehat{A}+\epsilon_2 \widehat{A}_{0}$$ exists, and has the form  
$$\tilde{A}^{\#}=\widehat{X}+\epsilon_2 \widehat{X}_{0}.$$ Then, it follows from Lemma \ref{lem2.2} that DGGI of $\widehat{A}$ exists and $\widehat{X}=\widehat{A}^{\#}$. Since the dual index of $\widehat{A}$ is 1, consider the dual matrix $\widehat{A}=A_0+\epsilon_1 A_1$, then by Theorem \ref{dualindex1}, we have $Ind(A_0)=1$. Now, applying Theorem \ref{th2.1} on the dual matrix $\widehat{A}=A_0+\epsilon_1 A_1$, we get
       $$\widehat{A}=P\begin{bmatrix}
            C & 0\\
            0 & 0
        \end{bmatrix}P^{-1}+\epsilon_1 P\begin{bmatrix}
            B_{1} & B_{2}\\
            B_{3} & 0
        \end{bmatrix}P^{-1}$$ and 
        $$\widehat{X}=\widehat{A}^{\#}=P\begin{bmatrix}
            C^{-1} & 0\\
            0 & 0
        \end{bmatrix}P^{-1}+\epsilon_1 P\begin{bmatrix}
            -C^{-1}B_{1}C^{-1} & C^{-2}B_{2}\\
            B_{3}C^{-2} & 0
        \end{bmatrix}P^{-1},$$
        where $P$ and $C$ are nonsingular matrices and $B_{1}, B_{2}, B_{3}$ are real matrices of suitable sizes. Let 
        \begin{align*}
            \widehat{A}_{0}&=P\begin{bmatrix}
            Y_{1} & Y_{2}\\
            Y_{3} & Y_{4}
        \end{bmatrix}P^{-1}+\epsilon_1 P\begin{bmatrix}
            Z_{1} & Z_{2}\\
            Z_{3} & Z_{4}
        \end{bmatrix}P^{-1}\end{align*}
        and
        \begin{align*}
\widehat{X}_{0}&=P\begin{bmatrix}
            X_{1} & X_{2}\\
            X_{3} & X_{4}
        \end{bmatrix}P^{-1}+\epsilon_1P\begin{bmatrix}
            W_{1} & W_{2}\\
            W_{3} & W_{4}
        \end{bmatrix}P^{-1}.
        \end{align*}
        Then, after some computations, we have 
           \begin{align*}
               \widehat{A}\widehat{A}^{\#}\widehat{A}_{0}&=P\begin{bmatrix}
            Y_{1} & Y_{2}\\
            0 & 0
        \end{bmatrix}P^{-1}+\epsilon_1 P\begin{bmatrix}
            Z_{1}+C^{-1}B_{2}Y_{3} & Z_{2}+C^{-1}B_{2}Y_{4}\\
            B_{3}C^{-1}Y_{1} & B_{3}C^{-1}Y_{2}
        \end{bmatrix}P^{-1},\\
        \widehat{A}\widehat{X}_{0}\widehat{A}&=P\begin{bmatrix}
            CX_{1}C & 0\\
            0 & 0
        \end{bmatrix}P^{-1}+\epsilon_1 P\begin{bmatrix}
            \theta & CX_{1}B_{2}\\
            B_{3}X_{1}C & 0
        \end{bmatrix}P^{-1},
           \end{align*}
           where $\theta=CW_{1}C+B_{1}X_{1}C+B_{2}X_{3}C+CX_{1}B_{1}+CX_{2}B_{3}$,
           and $$\widehat{A}_{0}\widehat{A}^{\#}\widehat{A}=P\begin{bmatrix}
            Y_{1} & 0\\
            Y_{3} & 0
        \end{bmatrix}P^{-1}+\epsilon_1 P\begin{bmatrix}
            Z_{1}+Y_{2}B_{3}C^{-1} & Y_{1}C^{-1}B_{2}\\
            Z_{3}+Y_{4}B_{3}C^{-1} & Y_{3}C^{-1}B_{2}
        \end{bmatrix}P^{-1}.$$
        Thus,    
          $$\widehat{A}\widehat{A}^{\#}\widehat{A}_{0}+\widehat{A}\widehat{X}_{0}\widehat{A}+\widehat{A}_{0}\widehat{A}^{\#}\widehat{A}=P\begin{bmatrix}
            2Y_{1}+CX_{1}C & Y_{2}\\
            Y_{3} & 0
        \end{bmatrix}P^{-1}+\epsilon_1 P\begin{bmatrix}
            \gamma_{1} & \gamma_{2}\\
            \gamma_{3} & \gamma_{4}
        \end{bmatrix}P^{-1},$$
        where \begin{align*}
            \gamma_{1}&=2Z_{1}+C^{-1}B_{2}Y_{3}+Y_{2}B_{3}C^{-1}+\theta,\\
            \gamma_{2}&=Z_{2}+C^{-1}B_{2}X_{4}+CX_{1}B_{2}+Y_{1}C^{-1}B_{2},\\
            \gamma_{3}&=Z_{3}+B_{3}C^{-1}Y_{1}+B_{3}X_{1}C+X_{4}B_{3}C^{-1},\\
            \gamma_{4}&=B_{3}C^{-1}Y_{2}+Y_{3}C^{-1}B_{2}.
        \end{align*}
        Now, from Lemma \ref{lem2.2}, we have 
                          
  $$ P\begin{bmatrix}
            2Y_{1}+CX_{1}C & Y_{2}\\
            Y_{3} & 0
        \end{bmatrix}P^{-1}+\epsilon_1 P\begin{bmatrix}
            \gamma_{1} & \gamma_{2}\\
            \gamma_{3} & \gamma_{4}
        \end{bmatrix}P^{-1}=P\begin{bmatrix}
            Y_{1} & Y_{2}\\
            Y_{3} & Y_{4}
        \end{bmatrix}P^{-1}+\epsilon_1 P\begin{bmatrix}
            Z_{1} & Z_{2}\\
            Z_{3} & Z_{4}
        \end{bmatrix}P^{-1}.$$
        Comparing the real part and the dual part of both sides of the above equality gives $Y_{4}=0$ and 
        $$\gamma_{4}=Z_{4}=B_{3}C^{-1}Y_{2}+Y_{3}C^{-1}B_{2}.$$
     Therefore, $\tilde{A}$ has the form 
     $$\tilde{A}=P\begin{bmatrix}
            C & 0\\
            0 & 0
        \end{bmatrix}P^{-1}+\epsilon_1 P\begin{bmatrix}
            B_{1} & B_{2}\\
            B_{3} & 0
        \end{bmatrix}P^{-1}+\epsilon_2\Bigg(P\begin{bmatrix}
            Y_{1} & Y_{2}\\
            Y_{3} & 0
        \end{bmatrix}P^{-1}+\epsilon_1 P\begin{bmatrix}
            Z_{1} & Z_{2}\\
            Z_{3} & Z_{4}
        \end{bmatrix}P^{-1}\Bigg).$$
   $(ii) \implies (iii):$\\
Since the dual index of $\widehat{A}$ is 1, by Theorem \ref{dualindex1}, $Ind(A_0)=1$. Let $\tilde{A}=\widehat{A}+\epsilon_2 \widehat{A}_{0}$,
   where  
$$\widehat{A}=P\begin{bmatrix}
            C & 0\\
            0 & 0
        \end{bmatrix}P^{-1}+\epsilon_1 P\begin{bmatrix}
            B_{1} & B_{2}\\
            B_{3} & 0
        \end{bmatrix}P^{-1} ~~\text{ and }~~\widehat{A}_{0}=P\begin{bmatrix}
            Y_{1} & Y_{2}\\
            Y_{3} & 0
        \end{bmatrix}P^{-1}+\epsilon_1 P\begin{bmatrix}
            Z_{1} & Z_{2}\\
            Z_{3} & Z_{4}
        \end{bmatrix}P^{-1}.$$
        Then, from Theorem \ref{th2.1}, the DGGI of $\widehat{A}$ exists. Now, 
\begin{align*}
            \Bigg(I_n - \widehat{A}\widehat{A}^{\#}) \widehat{A}_0 (I_n - \widehat{A}^{\#} \widehat{A}\Bigg) &=\Bigg(P\begin{bmatrix}
                0 & 0\\
                0 & I_{n-r}
            \end{bmatrix}P^{-1}+\epsilon_1 P\begin{bmatrix}
                    0 & -C^{-1}B_{2}\\
                    -B_{3}C^{-1} & 0
                \end{bmatrix}P^{-1} \Bigg)\\ &~~~~\times \Bigg(P\begin{bmatrix}
            Y_{1} & Y_{2}\\
            Y_{3} & 0
        \end{bmatrix}P^{-1}+\epsilon_1 P\begin{bmatrix}
            Z_{1} & Z_{2}\\
            Z_{3} & Z_{4}
        \end{bmatrix}P^{-1}\Bigg)\\ 
        &~~~~\times \Bigg(P\begin{bmatrix}
                0 & 0\\
                0 & I_{n-r}
            \end{bmatrix}P^{-1}+\epsilon_1 P\begin{bmatrix}
                    0 & -C^{-1}B_{2}\\
                    -B_{3}C^{-1} & 0
                \end{bmatrix}P^{-1} \Bigg)\\
                &=\epsilon_1 P\begin{bmatrix}
                    0 & 0\\
                    0 & Z_{4}-(B_{3}C^{-1}Y_{2}+Y_{3}C^{-1}B_{2})
                \end{bmatrix}P^{-1}.
\end{align*}
Therefore, if $Z_{4}=B_{3}C^{-1}Y_{2}+Y_{3}C^{-1}B_{2}$, then 
$$(I-\widehat{A}\widehat{A}^{\#})\widehat{A}_{0}(I-\widehat{A}\widehat{A}^{\#})=0.$$
$(iii) \implies (i):$\\
As dual index of $\widehat{A}$ is 1, by Theorem \ref{dualindex1}, DGGI of $\widehat{A}$ exists. Now, applying Theorem \ref{th2.1} on $\widehat{A}$, we have 
                       \begin{align*}
                    \widehat{A}&=P\begin{bmatrix}
            C & 0\\
            0 & 0
        \end{bmatrix}P^{-1}+\epsilon_1 P\begin{bmatrix}
            B_{1} & B_{2}\\
            B_{3} & 0
        \end{bmatrix}P^{-1}
        \end{align*}
             and  
\begin{align*}\widehat{A}^{\#}&=P\begin{bmatrix}
            C^{-1} & 0\\
            0 & 0
        \end{bmatrix}P^{-1}+P\begin{bmatrix}
            -C^{-1}B_{1}C^{-1} & C^{-2}B_{2}\\
            B_{3}C^{-2} & 0
        \end{bmatrix}P^{-1}.
                       \end{align*}
                       If $\widehat{A}_{0}=P\begin{bmatrix}
            Y_{1} & Y_{2}\\
            Y_{3} & Y_{4}
        \end{bmatrix}P^{-1}+\epsilon_1 P\begin{bmatrix}
            Z_{1} & Z_{2}\\
            Z_{3} & Z_{4}
        \end{bmatrix}P^{-1}$, then it follows from $(I-\widehat{A}\widehat{A}^{\#})\widehat{A}_{0}(I-\widehat{A}\widehat{A}^{\#})=0$ that 
           $$P\begin{bmatrix}
               0 & 0\\
               0 & Y_{4}
           \end{bmatrix}P^{-1}+\epsilon_1 P\begin{bmatrix}
               0 & -C^{-1}B_{2}Y_{4}\\
               -Y_{4}B_{3}C^{-1} & Z_{4}-(B_{3}C^{-1}Y_{2}+Y_{3}C^{-1}B_{2})
           \end{bmatrix}P^{-1}=0.$$
           It can be seen from the above equality that $Y_{4}=0$ and $Z_{4}=B_{3}C^{-1}Y_{2}+Y_{3}C^{-1}B_{2}$. Now, define
           \begin{align}\label{eq3.10}
               \tilde{G}&=P\begin{bmatrix}
            C^{-1} & 0\\
            0 & 0
        \end{bmatrix}P^{-1}+\epsilon_1 P\begin{bmatrix}
            -C^{-1}B_{1}C^{-1} & C^{-2}B_{2}\\
            B_{3}C^{-2} & 0
        \end{bmatrix}P^{-1}\notag\\
        &+\epsilon_2\Bigg(P\begin{bmatrix}
            -C^{-1}Y_{1}C^{-1} & C^{-2}Y_{2}\\
            Y_{3}C^{-2} & 0
        \end{bmatrix}P^{-1}+\epsilon_1 P\begin{bmatrix}
            M_{1} & M_{2}\\
            M_{3} & M_{4}
        \end{bmatrix}P^{-1}\Bigg),
           \end{align}
        where  
        \begin{align*}
           M_{1}&=-C^{-1}B_{2}Y_{3}C^{-2}-C^{-2}Y_{2}B_{3}C^{-1}-C^{-1}Z_{1}C^{-1}+C^{-1}B_{1}C^{-1}Y_{1}C^{-1}-C^{-2}B_{2}Y_{3}C^{-1}\\
           &+C^{-1}Y_{1}C^{-1}B_{1}C^{-1}-C^{-1}Y_{2}B_{3}C^{-2},  \\
           M_{2}&=C^{-2}Z_{2}-C^{-2}B_{1}C^{-1}Y_{2}-C^{-2}Y_{1}C^{-1}B_{2}-C^{-1}B_{1}C^{-2}Y_{2}-C^{-1}Y_{1}C^{-2}B_{2},\\
           M_{3}&=Z_{3}C^{-2}-B_{3}C^{-1}Y_{1}C^{-2}-Y_{3}C^{-1}B_{1}C^{-2}-Y_{3}C^{-2}B_{1}C^{-1}-B_{3}C^{-2}Y_{1}C^{-1},\\
           M_{4}&=B_{3}C^{-3}Y_{2}+Y_{3}C^{-3}B_{2}.
        \end{align*}
    Then, the  primal part of $\tilde{A}\tilde{G}$ is given by
                   $$\widehat{A}^{\#}\widehat{A}=P\begin{bmatrix}
            I & 0\\
            0 & 0
        \end{bmatrix}P^{-1}+\epsilon_1 P\begin{bmatrix}
            0 & C^{-1}B_{2}\\
            B_{3}C^{-1} & 0
        \end{bmatrix}P^{-1}$$ while the hyper-dual part of $\tilde{A}\tilde{G}$ is given as 
             \begin{align*}
                 \widehat{A}\widehat{X}_{0}+\widehat{A}_{0}\widehat{A}^{\#}&=P\begin{bmatrix}
             0 & C^{-1}Y_{2}\\
            Y_{3}C^{-1} & 0
        \end{bmatrix}P^{-1}+\epsilon_1 P\begin{bmatrix}
            P_{1} & P_{2}\\
            P_{3} & P_{4}
        \end{bmatrix}P^{-1},
             \end{align*}
        where \begin{align*}
            P_{1}&=-C^{-1}B_2Y_{3}C^{-1}-C^{-1}Y_{2}B_{3}C^{-1},\\
            P_{2}&=C^{-1}Z_{2}-C^{-1}B_{1}C^{-1}Y_{2}-C^{-1}Y_{1}C^{-1}B_{2},\\
            P_{3}&=Z_{3}C^{-1}-Y_{3}C^{-1}B_{1}C^{-1}-B_{1}C^{-1}Y_{1}C^{-1},\\
   P_{4}&=Y_{3}C^{-2}B_{2}+B_{3}C^{-2}Y_{2}.
        \end{align*}
        Similarly, the primal part of $\tilde{G}\tilde{A}$ is given by
         $$\widehat{A}\widehat{A}^{\#}=P\begin{bmatrix}
            I & 0\\
            0 & 0
        \end{bmatrix}P^{-1}+\epsilon_1 P\begin{bmatrix}
            0 & C^{-1}B_{2}\\
            B_{3}C^{-1} & 0
        \end{bmatrix}P^{-1}$$ while the hyper-dual part of $\tilde{G}\tilde{A}$ is 
             \begin{align*}
                 \widehat{A}^{\#}\widehat{A}_{0}+\widehat{X}_{0}\widehat{A}&=P\begin{bmatrix}
             0 & C^{-1}Y_{2}\\
            Y_{3}C^{-1} & 0
        \end{bmatrix}P^{-1}+\epsilon_1 P\begin{bmatrix}
            N_{1} & N_{2}\\
            N_{3} & N_{4}
        \end{bmatrix}P^{-1},
             \end{align*}
        where \begin{align*}
            N_{1}&=-C^{-1}B_2Y_{3}C^{-1}-C^{-1}Y_{2}B_{3}C^{-1},\\
            N_{2}&=C^{-1}Z_{2}-C^{-1}B_{1}C^{-1}Y_{2}-C^{-1}Y_{1}C^{-1}B_{2},\\
            N_{3}&=Z_{3}C^{-1}-Y_{3}C^{-1}B_{1}C^{-1}-B_{1}C^{-1}Y_{1}C^{-1},\\
            N_{4}&=Y_{3}C^{-2}B_{2}+B_{3}C^{-2}Y_{2}.
        \end{align*}
    Thus, $\tilde{A}\tilde{G}=\tilde{G}\tilde{A}$.
     Now, {\small{\begin{align*}
         \tilde{A}\tilde{G}\tilde{A}&=P\begin{bmatrix}
            C & 0\\
            0 & 0
        \end{bmatrix}P^{-1}+\epsilon_1 P\begin{bmatrix}
            0 & B_{2}\\
            0 & 0
        \end{bmatrix}P^{-1}+\epsilon_1 P\begin{bmatrix}
            B_{1} & 0\\
            B_{3} & 0
        \end{bmatrix}P^{-1}\\
        &+\epsilon_2\Bigg(P\begin{bmatrix}
            0 & Y_{2}\\
            0 & 0
        \end{bmatrix}P^{-1}+\epsilon_1 P\begin{bmatrix}
            B_{2}Y_{3}C^{-1} & B_{1}C^{-1}Y_{2}\\
            0 & B_{3}C^{-1}Y_{2}
        \end{bmatrix}P^{-1}\\
        &+\epsilon_1 P\begin{bmatrix}
            -B_{2}Y_{3}C^{-1}-Y_{2}B_{3}C^{-1} & Z_{2}-B_{1}C^{-1}Y_{2}-Y_{1}C^{-1}B_{2}\\
            0 & 0
        \end{bmatrix}P^{-1}+P\begin{bmatrix}
            Y_{1} & 0\\
            Y_{3} & 0
        \end{bmatrix}P^{-1}\\
        &+\epsilon_1 P\begin{bmatrix}
            Y_{2}B_{3}C^{-1} & Y_{1}C^{-1}B_{2}\\
            0 & Y_{3}C^{-1}B_{2}
        \end{bmatrix}P^{-1}+\epsilon_1 P \begin{bmatrix}
            Z_{1} & 0\\
            Z_{2} & 0
        \end{bmatrix}P^{-1}\Bigg)\\
        & = P\begin{bmatrix}
            C & 0\\
            0 & 0
        \end{bmatrix}P^{-1}+\epsilon_1 P\begin{bmatrix}
            B_{1} & B_{2}\\
            B_{3} & 0
        \end{bmatrix}P^{-1}+\epsilon_2\Bigg(P\begin{bmatrix}
            Y_{1} & Y_{2}\\
            Y_{3} & 0
        \end{bmatrix}P^{-1}+\epsilon_1 P\begin{bmatrix}
            Z_{1} & Z_{2}\\
            Z_{3} & Z_{4}
        \end{bmatrix}P^{-1}\Bigg)\\
        &=\tilde{A}.
     \end{align*}}}
     Again, {\small{\begin{align*}
         \tilde{G}\tilde{A}\tilde{G}&=P\begin{bmatrix}
             C^{-1} & 0\\
             0 & 0
         \end{bmatrix}P^{-1}+\epsilon_1 P\begin{bmatrix}
             0 & C^{-2}B_{2}\\
             0 & 0
         \end{bmatrix}P^{-1}+\epsilon_1 P \begin{bmatrix}
             -C^{-1}B_{1}C^{-1} & 0\\
             B_{2}C^{-2} & 0
         \end{bmatrix}P^{-1}+\epsilon_2 \Bigg(P \begin{bmatrix}
             0 & C^{-2}B_{2}\\
             0 & 0
         \end{bmatrix}P^{-1}\\
         &+\epsilon_1 P\begin{bmatrix}
             -C^{-2}B_{2}Y_{3}C^{-1}-C^{-2}Y_{2}B_{3}C^{-1} & C^{-2}Z_{2}-C^{-2}B_{1}C^{-1}Y_{2}-C^{-2}Y_{1}C^{-1}B_{2}\\
             0 & 0
         \end{bmatrix}P^{-1}\\
         &+\epsilon_1 P \begin{bmatrix}
             C^{-2}B_{2}Y_{3}C^{-1} & -C^{-1}B_{1}C^{-2}Y_{2}\\
             0 & B_{3}C^{-3}Y_{2}
         \end{bmatrix}P^{-1}+P \begin{bmatrix}
             -C^{-1}Y_{1}C^{-1} & 0\\
             Y_{3}C^{-2} & 0 
         \end{bmatrix}P^{-1} + \epsilon_1 P \begin{bmatrix}
             M_{1} & 0\\
             M_{2} & 0
         \end{bmatrix}P^{-1}\\
         &+\epsilon_1 P\begin{bmatrix}
             C^{-2}Y_{2}B_{3}C^{-1} & -C^{-1}Y_{1}C^{-2}B_{2}\\
             0 & Y_{3}C^{-3}B_{2}
         \end{bmatrix}P^{-1}\Bigg)\\
         &= P\begin{bmatrix}
            C^{-1} & 0\\
            0 & 0
        \end{bmatrix}P^{-1}+P\begin{bmatrix}
            -C^{-1}B_{1}C^{-1} & C^{-2}B_{2}\\
            B_{3}C^{-2} & 0
        \end{bmatrix}P^{-1}+\epsilon_2\Bigg(P\begin{bmatrix}
            -C^{-1}Y_{1}C^{-1} & C^{-2}Y_{2}\\
            Y_{3}C^{-2} & 0
        \end{bmatrix}P^{-1}\\
        &+\epsilon_1 P\begin{bmatrix}
            M_{1} & M_{2}\\
            M_{3} & M_{4}
        \end{bmatrix}P^{-1}\Bigg)\\
        &=\tilde{G}.
     \end{align*}}}   
     Thus, $\tilde{A}^{\#}$ exists and $\tilde{A}^{\#}=\tilde{G}$.\\
     $(iii) \iff (iv)$:\\
Since $\widehat{A}^{\#}$ exists, from Theorem \ref{th2.1}, we have 
      $$(I-A_{0}A_{0}^{\#})A_{1}(I-A_{0}A_{0}^{\#})=0.$$
      If $(I-\widehat{A}\widehat{A}^{\#})\widehat{A}_{0}(I-\widehat{A}\widehat{A}^{\#})=0$, then substituting $\widehat{A}=A_{0}+\epsilon_1 A_{1}$, $\widehat{A}_{0}=A_{2}+\epsilon_1 \widehat{A}_{3}$, and 
      $$\widehat{A}^{\#}=A_{0}^{\#}+\epsilon_1[-A_{0}^{\#}A_{1}A_{0}^{\#}+(A_{0}^{\#})^2A_{1}(I-A_{0}A_{0}^{\#})+(I-A_{0}A_{0}^{\#})A_{1}(A_{0}^{\#})^2]$$
      into $$(I-\widehat{A}\widehat{A}^{\#})\widehat{A}_{0}(I-\widehat{A}\widehat{A}^{\#})=0$$ gives 
      \begin{align*}
          (I-A_{0}A_{0}^{\#})A_{2}(I-A_{0}A_{0}^{\#})+\epsilon_1 [(I-A_{0}A_{0}^{\#})(A_{3}-A_{1}A_{0}^{\#}A_{2}-A_{2}A_{0}^{\#}A_{1})(I-A_{0}A_{0}^{\#}\\
      -(I-A_{0}A_{0}^{\#})A_{2}(I-A_{0}A_{0}^{\#})A_{1}A_{0}^{\#}-A_{0}^{\#}A_{1}(I-A_{0}A_{0}^{\#})A_{2}(I-A_{0}A_{0}^{\#})]=0,
      \end{align*}
      which implies  $$ (I-A_{0}A_{0}^{\#})A_{2}(I-A_{0}A_{0}^{\#})=0$$ and
      $$A_{3}-A_{1}A_{0}^{\#}A_{2}-A_{2}A_{0}^{\#}A_{1}=0.$$
     Conversely, if $(I-A_{0}A_{0}^{\#})A_{1}(I-A_{0}A_{0}^{\#})=0$, then $\widehat{A}^{\#}$ exists. Moreover, if 
     $$(I-A_{0}A_{0}^{\#})A_{2}(I-A_{0}A_{0}^{\#})=0$$ and 
     $$A_{3}-A_{1}A_{0}^{\#}A_{2}-A_{2}A_{0}^{\#}A_{1}=0,$$ then it is easy to see that  $$(I-\widehat{A}\widehat{A}^{\#})\widehat{A}_{0}(I-\widehat{A}\widehat{A}^{\#})=0.$$
     Furthermore,      
     \begin{equation}\label{3.11}
         \widehat{A}^{\#}\widehat{A}_{0}\widehat{A}^{\#}=P\begin{bmatrix}
         C^{-1}Y_{1}C^{-1} & 0\\
         0 & 0
     \end{bmatrix}P^{-1}+\epsilon_1 P \begin{bmatrix}
         T_{1} & C^{-1}Y_{1}C^{-2}B_{2}\\
         B_{3}C^{-2}Y_{-1}C^{-1} & 0
     \end{bmatrix}P^{-1},
     \end{equation}
     where $T_{1}=C^{-1}Z_{1}C^{-1}-C^{-1}B_{1}C^{-1}Y_{1}C^{-1}+C^{-2}B_{2}Y_{3}C^{-1}-C^{-1}Y_{1}C^{-1}B_{-1}C^{-1}+C^{-1}Y_{2}B_{3}C^{-2}$ and
     \begin{equation}\label{3.12}
         (\widehat{A}^{\#})^{2}\widehat{A}_{0}(I-\widehat{A}\widehat{A}^{\#})=P\begin{bmatrix}
         0 & Y_{2}C^{-2} \\
         0 & 0
     \end{bmatrix}P^{-1}+\epsilon_1 P\begin{bmatrix}
         -C^{2}Y_{2}B_{3}C^{-1} & T_{2}\\
         0 & B_{3}C^{-3}Y_{2}
     \end{bmatrix}P^{-1},
     \end{equation}
     where $T_{2}=C^{-2}Z_{2}-C^{-2}Y_{1}C^{-1}B_{2}-C^{-2}B_{1}C^{-1}Y_{2}-C^{-1}B_{1}C^{-2}Y_{2}$. Again, 

     \begin{equation}\label{3.13}
         (I-\widehat{A}\widehat{A}^{\#})\widehat{A}_{0}(\widehat{A}^{\#})^{2}=P\begin{bmatrix}
         0 & 0\\
         Y_{3}C^{-2} & 0
     \end{bmatrix}P^{-1}+\epsilon_1 P\begin{bmatrix}
         -C^{-1}B_{2}Y_{3}C^{-2} & 0\\
         T_{3} & Y_{3}C^{-3}B_{2}
     \end{bmatrix}P^{-1},
     \end{equation}
     where $T_{3}=Z_{3}C^{-2}-Y_{3}C^{-2}B_{1}C^{-2}-B_{3}C^{-1}Y_{1}C^{-2}$. Now, on comparing equations \eqref{eq3.10}, \eqref{3.11}, \eqref{3.12} and \eqref{3.13}, we get
     $$\tilde{A}^{\#}=\widehat{A}^{\#}+\epsilon_2(-\widehat{A}^{\#}\widehat{A}_{0}\widehat{A}^{\#}+(\widehat{A}^{\#})^{2}\widehat{A}_{0}(I-\widehat{A}\widehat{A}^{\#})+(I-\widehat{A}\widehat{A}^{\#})\widehat{A}_{0}(\widehat{A}^{\#})^{2}).$$
\end{proof}

Next, we present an example for computing the HDGGI of the hyper-dual matrix, provided that it exists.
     
 \begin{example}
     Let $\tilde{A}=A_{0}+\epsilon_1 A_{1}+\epsilon_2 A_{2}+ \epsilon_1 \epsilon_2 A_{3}= \begin{bmatrix}
         1 & 0\\
         0 & 0
     \end{bmatrix}+\epsilon_1 \begin{bmatrix}
         1 & -1\\
         1 & 0
     \end{bmatrix}+\epsilon_2 \begin{bmatrix}
         2 & 1\\
         1 & 0
     \end{bmatrix}+\epsilon_1 \epsilon_2 \begin{bmatrix}
         4 & -1\\
         3 & 0
     \end{bmatrix}$. Then, we have
     $(I-A_{0}A_{0}^{\#})=\begin{bmatrix}
         0 &0\\
         0 &1
     \end{bmatrix}$, $A_1A_0^{\#}A_2=\begin{bmatrix}
         2     &1\\
     2     &1
     \end{bmatrix}$ and $A_2A_0^{\#}A_1=\begin{bmatrix}
         2     &-2\\
         1     &-1
     \end{bmatrix}$. On simplification, we get
     \begin{align*}
            (I-A_{0}A_{0}^{\#})A_{1}(I-A_{0}A_{0}^{\#})&= (I-A_{0}A_{0}^{\#})A_{2}(I-A_{0}A_{0}^{\#})\\
                                                       &= A_{3}-A_{1}A_{0}^{\#}A_{2}-A_{2}A_{0}^{\#}A_{1}=0.
        \end{align*}
        Thus, from Theorem \ref{th2.4} $(iv)$, the HDGGI of $\tilde{A}$ exists and 
        $$\tilde{A}^{\#}=\begin{bmatrix}
            1 & 0\\
            0 & 0
        \end{bmatrix}+\epsilon_1 \begin{bmatrix}
            -1 & -1\\
            1 & 0
        \end{bmatrix}+\epsilon_2 \begin{bmatrix}
            -2 & 1\\
            1 & 0
        \end{bmatrix}+\epsilon_1 \epsilon_2 \begin{bmatrix}
            0 &1\\
            -2 & 0
        \end{bmatrix}.$$
       
 \end{example}

The following corollary follows directly from Theorem \ref{th2.4}.
\begin{corollary}\label{thm3.4}
Let $\tilde{A} = \widehat{A} + \epsilon_2 \widehat{A}_0$ be a hyper-dual matrix such that the dual index of $\widehat{A}$ is 1. If $\tilde{A}^{\#}$ exists and $\widehat{A}\widehat{A}^{\#}\widehat{A}_0=\widehat{A}_0\widehat{A}\widehat{A}^{\#}=\widehat{A}_0$, then
$$\tilde{A}^{\#}=\widehat{A}^{\#}-\epsilon_2 \widehat{A}^{\#}\widehat{A}_0\widehat{A}^{\#}.$$
\end{corollary}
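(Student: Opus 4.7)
The plan is to derive this corollary as a direct specialization of the closed-form expression stated at the end of Theorem \ref{th2.4}. Since $\tilde{A}^{\#}$ is assumed to exist and the dual index of $\widehat{A}$ equals $1$, that theorem hands us
$$\tilde{A}^{\#}=\widehat{A}^{\#}+\epsilon^{*}\bigl(-\widehat{A}^{\#}\widehat{A}_{0}\widehat{A}^{\#}+(\widehat{A}^{\#})^{2}\widehat{A}_{0}(I-\widehat{A}\widehat{A}^{\#})+(I-\widehat{A}\widehat{A}^{\#})\widehat{A}_{0}(\widehat{A}^{\#})^{2}\bigr).$$
It therefore suffices to show that the second and third summands inside the parenthesis vanish under the hypothesis $\widehat{A}\widehat{A}^{\#}\widehat{A}_{0}=\widehat{A}_{0}\widehat{A}\widehat{A}^{\#}=\widehat{A}_{0}$.

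To carry this out I would first rewrite the two hypotheses as the annihilation identities $(I-\widehat{A}\widehat{A}^{\#})\widehat{A}_{0}=0$ and $\widehat{A}_{0}(I-\widehat{A}\widehat{A}^{\#})=0$. Multiplying the former on the right by $(\widehat{A}^{\#})^{2}$ gives $(I-\widehat{A}\widehat{A}^{\#})\widehat{A}_{0}(\widehat{A}^{\#})^{2}=0$, and multiplying the latter on the left by $(\widehat{A}^{\#})^{2}$ gives $(\widehat{A}^{\#})^{2}\widehat{A}_{0}(I-\widehat{A}\widehat{A}^{\#})=0$. Substituting these two identities back into the displayed formula leaves only $-\widehat{A}^{\#}\widehat{A}_{0}\widehat{A}^{\#}$ inside the parenthesis, so one concludes
$$\tilde{A}^{\#}=\widehat{A}^{\#}-\epsilon^{*}\widehat{A}^{\#}\widehat{A}_{0}\widehat{A}^{\#},$$
which is the asserted expression (reading the $\epsilon$ of the statement as the hyper-dual unit $\epsilon^{*}$, in keeping with the formula of Theorem \ref{th2.4}).

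No real obstacle arises here: once Theorem \ref{th2.4} is in hand, this is a one-line algebraic reduction. The only conceptual observation worth isolating is that the two hypotheses are precisely the left- and right-absorption identities for the idempotent $\widehat{A}\widehat{A}^{\#}$, and these are exactly what is needed to kill the two correction terms appearing in the general HDGGI formula, in complete parallel with the derivation of Corollary \ref{cor2.3} from Theorem \ref{thm2.2} in the Moore--Penrose setting.
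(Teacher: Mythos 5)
Your proposal is correct and matches the paper's intent exactly: the paper offers no separate argument, stating only that the corollary ``follows directly from Theorem \ref{th2.4}'', and your specialization of that theorem's formula---using the hypotheses in the form $(I-\widehat{A}\widehat{A}^{\#})\widehat{A}_0=0$ and $\widehat{A}_0(I-\widehat{A}\widehat{A}^{\#})=0$ to annihilate the two correction terms---is precisely that direct derivation. Your parenthetical reading of the statement's $\epsilon$ as $\epsilon^*$ is also the right call.
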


\section{HDGGI solution of the system of linear hyper-dual equations}\label{rm3}
In this section, we consider the system of linear hyper-dual equations \begin{equation}\label{eq4.1}
    \tilde{A}\tilde{x}=\tilde{b},
\end{equation} where $\tilde{A}=\widehat{A}+\epsilon_1 \widehat{A}_{0}$ and $\tilde{b}=\widehat{b}+\epsilon_2\widehat{b}_{0}$. We will show that the hyper-dual group
 inverse possesses the minimal $P$-norm least-squares property under some suitable conditions. First, we define the range and null space of hyper-dual matrix $\tilde{A}=A_{0}+\epsilon_1 A_{1}+\epsilon_2 A_{2}+\epsilon_1 \epsilon_2 A_{3}$ as
 \begin{align*}
    R(\tilde{A})&=\{\tilde{w}\in \tilde{\mathbb{R}}^n: ~\tilde{w}=\tilde{A}\tilde{z},~ \tilde{z}\in \tilde{\mathbb{R}}^n\}\\
    &=\{A_0x_0+\epsilon_1(A_0x_1+A_1x_0)+\epsilon_2(A_0x_2+A_2x_0)+\epsilon_1\epsilon_2(A_0x_3+A_1x_2+A_2x_1+A_3x_0), ~ x_i\in \mathbb{R}^n\},
\end{align*}
and 
\begin{align*}
    N(\tilde{A})&=\{\tilde{z}\in \tilde{\mathbb{R}}^n:~ 0=\tilde{A}\tilde{z}\}\\
    &=\{x_{0}+\epsilon_1 x_{1}+\epsilon_2 x_{2}+\epsilon_1 \epsilon_2 x_{3}\in \tilde{\mathbb{R}}^n:~A_0x_0=0, A_0x_1+A_1x_0=0, A_0x_2+A_2x_0=0, \\
    &~~~~~~A_0x_3+A_1x_2+A_2x_1+A_3x_0=0\}.
\end{align*}
Next, we obtain an equivalent condition such that equation \eqref{eq4.1} has a solution of the form $\tilde{A}^\#\tilde{b}$. The proof of the result is straightforward and is therefore omitted.
 
\begin{theorem}
    Let $\tilde{A}=\widehat{A}+\epsilon_2\widehat{A}_0$ be a hyper-dual matrix such that the dual index of $\widehat{A}$ is 1. If the hyper-dual group inverse of $\tilde{A}$ exists, then the linear hyper-dual equation \eqref{eq4.1} is consistent if and only if $\tilde{A}\tilde{A}^\#\tilde{b}=\tilde{b}$. Moreover, the general solution to \eqref{eq4.1} is given by
    $$\tilde{A}^{\#}\tilde{b}+(I-\tilde{A}\tilde{A}^{\#})\tilde{z},$$
 for an arbitrary $\tilde{z}\in \tilde{\mathbb{R}}^n$.
\end{theorem}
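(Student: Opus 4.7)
The plan is to mirror the standard argument used for the ordinary group inverse, relying only on the three defining identities of the HDGGI: $\tilde{A}\tilde{A}^{\#}\tilde{A}=\tilde{A}$, $\tilde{A}^{\#}\tilde{A}\tilde{A}^{\#}=\tilde{A}^{\#}$, and $\tilde{A}\tilde{A}^{\#}=\tilde{A}^{\#}\tilde{A}$. These identities make left-multiplication by $\tilde{A}\tilde{A}^{\#}$ behave as a projector onto $R(\tilde{A})$ along $N(\tilde{A})$, which is precisely what drives the consistency criterion and the parametrization of solutions.

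For the consistency criterion I would first handle the easy direction: if $\tilde{A}\tilde{A}^{\#}\tilde{b}=\tilde{b}$, then $\tilde{x}_{0}=\tilde{A}^{\#}\tilde{b}$ is an explicit solution, since $\tilde{A}\tilde{x}_{0}=\tilde{A}\tilde{A}^{\#}\tilde{b}=\tilde{b}$. Conversely, if some $\tilde{x}\in\tilde{\mathbb{R}}^{n}$ satisfies $\tilde{A}\tilde{x}=\tilde{b}$, I would premultiply by $\tilde{A}\tilde{A}^{\#}$ and invoke $\tilde{A}\tilde{A}^{\#}\tilde{A}=\tilde{A}$ to obtain $\tilde{A}\tilde{A}^{\#}\tilde{b}=\tilde{A}\tilde{A}^{\#}\tilde{A}\tilde{x}=\tilde{A}\tilde{x}=\tilde{b}$.

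For the description of the general solution I would verify both inclusions separately. Given any $\tilde{z}\in\tilde{\mathbb{R}}^{n}$, set $\tilde{x}=\tilde{A}^{\#}\tilde{b}+(I-\tilde{A}\tilde{A}^{\#})\tilde{z}$; then $\tilde{A}\tilde{x}=\tilde{A}\tilde{A}^{\#}\tilde{b}+(\tilde{A}-\tilde{A}\tilde{A}\tilde{A}^{\#})\tilde{z}$, and the bracketed term vanishes because $\tilde{A}\tilde{A}\tilde{A}^{\#}=\tilde{A}\tilde{A}^{\#}\tilde{A}=\tilde{A}$ by commutativity of $\tilde{A}$ and $\tilde{A}^{\#}$ together with the first defining identity. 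Using the assumed consistency condition $\tilde{A}\tilde{A}^{\#}\tilde{b}=\tilde{b}$ then yields $\tilde{A}\tilde{x}=\tilde{b}$. For the reverse inclusion, given any solution $\tilde{x}$, write the trivial decomposition $\tilde{x}=\tilde{A}^{\#}\tilde{A}\tilde{x}+(I-\tilde{A}^{\#}\tilde{A})\tilde{x}$, substitute $\tilde{A}\tilde{x}=\tilde{b}$ in the first term and $\tilde{A}^{\#}\tilde{A}=\tilde{A}\tilde{A}^{\#}$ in the second to recover $\tilde{x}=\tilde{A}^{\#}\tilde{b}+(I-\tilde{A}\tilde{A}^{\#})\tilde{x}$, which is of the required form with $\tilde{z}=\tilde{x}$.

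No step should present a serious obstacle: the argument is purely algebraic and formally identical to the classical real case, since the hyper-dual scalars enter only through the ring structure of $\tilde{\mathbb{R}}$ (associativity and distributivity of matrix multiplication). The only thing I would watch carefully is that every manipulation is a consequence of the three HDGGI axioms rather than of Moore--Penrose-type properties, so that nothing beyond what Lemma \ref{lem2.2} and the definition guarantee is silently assumed.
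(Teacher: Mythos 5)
Your argument is correct and complete: both directions of the consistency criterion and both inclusions for the solution set follow from the three HDGGI axioms alone, and since you use only associativity and distributivity (no cancellation), nothing breaks over the hyper-dual ring despite its zero divisors. For what it is worth, the paper states this theorem without supplying any proof at all, so your write-up is exactly the standard argument the authors presumably had in mind; the only cosmetic point is that the ``general solution'' clause should be read under the standing assumption that \eqref{eq4.1} is consistent, which is how you have used it.
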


\begin{corollary}
 Let $\tilde{A}=\widehat{A}+\epsilon_2\widehat{A}_0$ be a hyper-dual matrix such that the dual index of $\widehat{A}$ is 1. If the hyper-dual group inverse of $\tilde{A}$ exists, then $\tilde{x}=\tilde{A}^\#\tilde{b}$ is a solution of the linear hyper-dual equation \eqref{eq4.1} if and only if  $\widehat{b}=\widehat{A}\widehat{A}^{\#}\widehat{b}$ and $\widehat{b}_{0}=\widehat{A}\widehat{A}^{\#}\widehat{b}_{0}+(I-\widehat{A}\widehat{A}^{\#})\widehat{A}_{0}\widehat{A}^{\#}\widehat{b}$. 
  \end{corollary}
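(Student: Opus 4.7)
The plan is to invoke the preceding theorem, which reduces the question ``is $\tilde{x}=\tilde{A}^{\#}\tilde{b}$ a solution of \eqref{eq4.1}?'' to the single hyper-dual identity $\tilde{A}\tilde{A}^{\#}\tilde{b}=\tilde{b}$, and then to expand that identity into its primal and $\epsilon^{*}$ components using the explicit formula for $\tilde{A}^{\#}$ supplied by Theorem \ref{th2.4}.

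First I would write $\tilde{A}^{\#}=\widehat{A}^{\#}+\epsilon^{*}\widehat{G}_{0}$, where
$$\widehat{G}_{0}=-\widehat{A}^{\#}\widehat{A}_{0}\widehat{A}^{\#}+(\widehat{A}^{\#})^{2}\widehat{A}_{0}(I-\widehat{A}\widehat{A}^{\#})+(I-\widehat{A}\widehat{A}^{\#})\widehat{A}_{0}(\widehat{A}^{\#})^{2}.$$
Multiplying out $(\widehat{A}+\epsilon^{*}\widehat{A}_{0})(\widehat{A}^{\#}+\epsilon^{*}\widehat{G}_{0})(\widehat{b}+\epsilon^{*}\widehat{b}_{0})$ and using $(\epsilon^{*})^{2}=0$, the primal part of $\tilde{A}\tilde{A}^{\#}\tilde{b}$ is $\widehat{A}\widehat{A}^{\#}\widehat{b}$ and the $\epsilon^{*}$-part is $\widehat{A}\widehat{A}^{\#}\widehat{b}_{0}+\widehat{A}\widehat{G}_{0}\widehat{b}+\widehat{A}_{0}\widehat{A}^{\#}\widehat{b}$. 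Setting the primal part equal to $\widehat{b}$ gives the first required condition $\widehat{b}=\widehat{A}\widehat{A}^{\#}\widehat{b}$ directly.

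For the second condition I would simplify $\widehat{A}\widehat{G}_{0}$ using the group-inverse identities $\widehat{A}\widehat{A}^{\#}\widehat{A}=\widehat{A}$ and $\widehat{A}\widehat{A}^{\#}=\widehat{A}^{\#}\widehat{A}$, which imply $\widehat{A}(I-\widehat{A}\widehat{A}^{\#})=0$ and $\widehat{A}(\widehat{A}^{\#})^{2}=\widehat{A}^{\#}$. The third summand of $\widehat{A}\widehat{G}_{0}$ is thereby annihilated, leaving $\widehat{A}\widehat{G}_{0}=-\widehat{A}\widehat{A}^{\#}\widehat{A}_{0}\widehat{A}^{\#}+\widehat{A}^{\#}\widehat{A}_{0}(I-\widehat{A}\widehat{A}^{\#})$. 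Applying this to $\widehat{b}$ and invoking the already-established primal condition $(I-\widehat{A}\widehat{A}^{\#})\widehat{b}=0$ kills the second term, so $\widehat{A}\widehat{G}_{0}\widehat{b}=-\widehat{A}\widehat{A}^{\#}\widehat{A}_{0}\widehat{A}^{\#}\widehat{b}$. Substituting back into the $\epsilon^{*}$-equation collapses it to $\widehat{A}\widehat{A}^{\#}\widehat{b}_{0}+(I-\widehat{A}\widehat{A}^{\#})\widehat{A}_{0}\widehat{A}^{\#}\widehat{b}=\widehat{b}_{0}$, exactly the second asserted condition. The converse is obtained by running the same computation in reverse: the two scalar conditions reassemble into $\tilde{A}\tilde{A}^{\#}\tilde{b}=\tilde{b}$.

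The argument is essentially bookkeeping rather than deep: the only delicate point is handling the three-term formula for $\widehat{G}_{0}$ carefully so that the piece $(I-\widehat{A}\widehat{A}^{\#})\widehat{A}_{0}(\widehat{A}^{\#})^{2}$ is correctly annihilated on the left by $\widehat{A}$, and remembering that the primal condition must already be in hand before it can be used to simplify the $\epsilon^{*}$-condition.
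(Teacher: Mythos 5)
Your proposal is correct and follows essentially the same route as the paper: compute $\tilde{A}\tilde{A}^{\#}\tilde{b}$ explicitly and equate its primal and $\epsilon^{*}$ parts with those of $\tilde{b}$. You in fact supply a detail the paper glosses over, namely that the summand $\widehat{A}^{\#}\widehat{A}_{0}(I-\widehat{A}\widehat{A}^{\#})\widehat{b}$ appearing in the $\epsilon^{*}$-part only vanishes once the primal condition $\widehat{b}=\widehat{A}\widehat{A}^{\#}\widehat{b}$ is already in hand.
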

\begin{proof}
    Suppose that $\tilde{x}=\tilde{A}^{\#}\tilde{b}$ is solution of $\tilde{A}\tilde{x}=\tilde{b}$. Then, we have
     $$\widehat{A}\widehat{A}^{\#}\widehat{b}+\epsilon_2 [(\widehat{A}\widehat{A}^{\#}\widehat{b}_{0}+(I-\widehat{A}\widehat{A}^{\#})\widehat{A}_{0}\widehat{A}^{\#})\widehat{b}]=\widehat{b}+\epsilon_2 \widehat{b}_{0}.$$
     Equating the primal and hyper-dual part, we get 
         $$\widehat{b}=\widehat{A}\widehat{A}^{\#}\widehat{b}~\text{and}~\widehat{b}_{0}=\widehat{A}\widehat{A}^{\#}\widehat{b}_{0}+(I-\widehat{A}\widehat{A}^{\#})\widehat{A}_{0}\widehat{A}^{\#}\widehat{b}.$$
         Conversely, if $\widehat{b}=\widehat{A}\widehat{A}^{\#}\widehat{b}$ and $\widehat{b}_{0}=\widehat{A}\widehat{A}^{\#}\widehat{b}_{0}+(I-\widehat{A}\widehat{A}^{\#})\widehat{A}_{0}\widehat{A}^{\#}\widehat{b}$, then $\tilde{x}=\tilde{A}^{\#}\tilde{b}$ is solution of $\tilde{A}\tilde{x}=\tilde{b}$.
\end{proof}

\begin{theorem}
    Let $\tilde{A}=\widehat{A}+\epsilon_2\widehat{A}_0=A_{0}+\epsilon_1 A_{1}+\epsilon_2 A_{2}+\epsilon_1\epsilon_2 A_{3}$ be a hyper-dual matrix such that the dual index of $\widehat{A}$ is 1. If the hyper-dual group inverse of $\tilde{A}$ exists, then $\tilde{A}^{\#}\tilde{b}$ is the unique solution in $R(\tilde{A})$ of the group normal equation, i.e., 
    $$\tilde{A}^2\tilde{x}=\tilde{A}\tilde{b}.$$
\end{theorem}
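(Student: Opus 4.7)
The plan is to unpack the claim into three sub-claims and verify them in order: $\tilde{x} = \tilde{A}^{\#}\tilde{b}$ solves $\tilde{A}^2\tilde{x} = \tilde{A}\tilde{b}$, it lies in $R(\tilde{A})$, and any other element of $R(\tilde{A})$ satisfying the equation coincides with it. Throughout I will only use the three defining identities $\tilde{A}\tilde{A}^{\#}\tilde{A} = \tilde{A}$, $\tilde{A}^{\#}\tilde{A}\tilde{A}^{\#} = \tilde{A}^{\#}$, and $\tilde{A}\tilde{A}^{\#} = \tilde{A}^{\#}\tilde{A}$, which hold by the existence hypothesis combined with Theorem \ref{th2.4}, and the definition of $R(\tilde{A})$ recalled just above the statement.

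For the first step I would compute $\tilde{A}^{2}\tilde{A}^{\#}$. Using $\tilde{A}\tilde{A}^{\#}\tilde{A} = \tilde{A}$, one gets $\tilde{A}^{2}\tilde{A}^{\#} = \tilde{A}(\tilde{A}\tilde{A}^{\#}) = \tilde{A}\tilde{A}^{\#}\tilde{A} = \tilde{A}$, so $\tilde{A}^{2}(\tilde{A}^{\#}\tilde{b}) = \tilde{A}\tilde{b}$, as needed. For the second step, I would write
\[
\tilde{A}^{\#}\tilde{b} = (\tilde{A}^{\#}\tilde{A}\tilde{A}^{\#})\tilde{b} = \tilde{A}\bigl((\tilde{A}^{\#})^{2}\tilde{b}\bigr),
\]
where the first equality is the second group-inverse identity and the second uses the commutativity $\tilde{A}\tilde{A}^{\#} = \tilde{A}^{\#}\tilde{A}$; this exhibits $\tilde{A}^{\#}\tilde{b}$ as $\tilde{A}\tilde{z}$ with $\tilde{z} = (\tilde{A}^{\#})^{2}\tilde{b}\in\tilde{\mathbb{R}}^{n}$, hence it lies in $R(\tilde{A})$.

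The uniqueness part is the most delicate piece. Suppose $\tilde{y}\in R(\tilde{A})$ also satisfies $\tilde{A}^{2}\tilde{y} = \tilde{A}\tilde{b}$, so that $\tilde{A}^{2}(\tilde{A}^{\#}\tilde{b} - \tilde{y}) = 0$. Left-multiplying by $(\tilde{A}^{\#})^{2}$ and simplifying via $(\tilde{A}^{\#})^{2}\tilde{A}^{2} = \tilde{A}^{\#}(\tilde{A}^{\#}\tilde{A})\tilde{A} = \tilde{A}^{\#}\tilde{A}\tilde{A}^{\#}\tilde{A} = \tilde{A}\tilde{A}^{\#}$ gives $\tilde{A}\tilde{A}^{\#}(\tilde{A}^{\#}\tilde{b} - \tilde{y}) = 0$. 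I then need the auxiliary observation that $\tilde{A}\tilde{A}^{\#}$ acts as the identity on $R(\tilde{A})$: for any $\tilde{w} = \tilde{A}\tilde{z}$, $\tilde{A}\tilde{A}^{\#}\tilde{w} = \tilde{A}\tilde{A}^{\#}\tilde{A}\tilde{z} = \tilde{A}\tilde{z} = \tilde{w}$. Since both $\tilde{A}^{\#}\tilde{b}$ (by the second step) and $\tilde{y}$ (by assumption) belong to $R(\tilde{A})$, their difference is also in $R(\tilde{A})$, so $\tilde{A}\tilde{A}^{\#}(\tilde{A}^{\#}\tilde{b} - \tilde{y}) = \tilde{A}^{\#}\tilde{b} - \tilde{y}$, and comparing with the previous equation forces $\tilde{y} = \tilde{A}^{\#}\tilde{b}$.

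The main obstacle I anticipate is purely notational rather than conceptual: one must be careful that the manipulations above really go through in the hyper-dual ring $\tilde{\mathbb{R}}$ (which is commutative with nilpotents but not a field), and that the set $R(\tilde{A})$ as defined in the paper is genuinely closed under the subtraction $\tilde{A}^{\#}\tilde{b} - \tilde{y}$. The closure point can be checked from the explicit parametric description of $R(\tilde{A})$ given in the preamble to the theorem, while every other manipulation uses only the three group-inverse identities, which are valid over any ring. No component-wise computation in the $\epsilon,\epsilon^{*}$ expansions is needed.
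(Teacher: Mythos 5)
Your proof is correct, and your first two steps (that $\tilde{A}^{\#}\tilde{b}$ solves the equation and lies in $R(\tilde{A})$) coincide with what the paper simply asserts as "easy to see." Where you genuinely diverge is in the uniqueness argument. The paper reduces uniqueness to the claim $R(\tilde{A})\cap N(\tilde{A})=\{0\}$ and then proves that claim by a long component-wise computation in the $\epsilon,\epsilon^{*}$ expansion: it writes an arbitrary element of $R(\tilde{A})\cap N(\tilde{A})$ in terms of $A_{0},\dots,A_{3}$ and $x_{0},\dots,x_{3}$, and uses the existence conditions $(I-A_{0}A_{0}^{\#})A_{i}(I-A_{0}A_{0}^{\#})=0$ for $i=1,2$ and $A_{3}=A_{1}A_{0}^{\#}A_{2}+A_{2}A_{0}^{\#}A_{1}$, together with $R(A_{0})\cap N(A_{0})=\{0\}$, to kill each coefficient in turn. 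You replace all of this with the single observation that $\tilde{A}\tilde{A}^{\#}$ acts as the identity on $R(\tilde{A})$ (immediate from $\tilde{A}\tilde{A}^{\#}\tilde{A}=\tilde{A}$), combined with $(\tilde{A}^{\#})^{2}\tilde{A}^{2}=\tilde{A}\tilde{A}^{\#}$; the closure of $R(\tilde{A})$ under subtraction that you flag as a possible obstacle is trivial, since $\tilde{A}\tilde{z}_{1}-\tilde{A}\tilde{z}_{2}=\tilde{A}(\tilde{z}_{1}-\tilde{z}_{2})$. Your route is shorter, uses only the three defining identities of the group inverse (hence works verbatim over any associative ring, with no component-wise computation), and in fact yields the paper's key lemma $R(\tilde{A})\cap N(\tilde{A})=\{0\}$ in one line: if $\tilde{w}=\tilde{A}\tilde{z}$ and $\tilde{A}\tilde{w}=0$, then $\tilde{w}=\tilde{A}\tilde{A}^{\#}\tilde{A}\tilde{z}=\tilde{A}\tilde{A}^{\#}\tilde{w}=\tilde{A}^{\#}\tilde{A}\tilde{w}=0$. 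What the paper's computation buys in exchange is an explicit picture of how the conditions of Theorem \ref{th2.4}(iv) constrain the range and null space at each $\epsilon$-level, but for proving this theorem your argument is the more economical and more general one.
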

\begin{proof}
     It is easy to see that $\tilde{A}^\#\tilde{b}  \in  R(\tilde{A}^\#)=R(\tilde{A})$ is a solution of $\tilde{A}^2\tilde{x}=\tilde{A}\tilde{b}.$ Let $\tilde{u}$ be the solution of $\tilde{A}^2\tilde{x}=\tilde{A}\tilde{b}$ in $R(\tilde{A})$ different from $\tilde{A}^\#\tilde{b}$. Then, $\tilde{u}-\tilde{A}^\#\tilde{b} \in R(\tilde{A})$ and $\tilde{A}^2(\tilde{u}-\tilde{A}^\#\tilde{b})=0.$ Pre-multiplying the last equation by $\tilde{A}^\#$, we get $\tilde{A}(\tilde{u}-\tilde{A}^\#\tilde{b})=0$, i.e., $\tilde{u}-\tilde{A}^\#\tilde{b} \in N(\tilde{A})$. Therefore, $\tilde{u}-\tilde{A}^\#\tilde{b} \in R(\tilde{A}) \cap N(\tilde{A})$. Next, we will show that $R(\tilde{A})\cap N(\tilde{A})=\{0\}$, which implies $\tilde{u}=\tilde{A}^{\#}\tilde{b}$ and the uniqueness of $\tilde{A}^{\#}\tilde{b}$ in $R(\tilde{A})$.

     Let $\tilde{w}\in R(\tilde{A})\cap N(\tilde{A})$. Then, $\tilde{w}=\tilde{A}\tilde{x}$, i.e., $\tilde{w}=A_0x_0+\epsilon_1(A_0x_1+A_1x_0)+\epsilon_2(A_0x_2+A_2x_0)+\epsilon_1\epsilon_2(A_0x_3+A_1x_2+A_2x_1+A_3x_0)$. Moreover, from $\tilde{A}\tilde{w}=0$, we have \begin{align}
    A_{0}^{2}x_0=0\label{4.2}\\ A_{0}^{2}x_1+A_0A_1x_0+A_1A_0x_0=0\label{4.3}\\
    A_{0}^{2}x_2+A_0A_2x_0+A_2A_0x_0=0\label{4.4}\\
    A_{0}^{2}x_3+(A_0A_2+A_2A_0)x_1+(A_0A_1+A_1A_0)x_2\notag\\+(A_0A_3+A_1A_2+A_2A_1+A_3A_0)x_0=0\label{4.5}.
   \end{align}
  Since $A_{0}^{\#}$ exists, we have $A_0^2x_0=0$ which implies that $A_0x_0=0$.
    From \eqref{4.3}, 
  \begin{align*}
    A_{0}^{2}x_1+A_0A_1x_0=0\\
    \implies A_{0}(A_{0}x_1+A_1x_0)=0\\
    \implies A_{0}x_1+A_1x_0 \in N(A_0).
  \end{align*}
   Also, using $(I-A_0A_0^{\#})A_1(I-A_0A_0^{\#})=0$ we can see that 
  \begin{align*}
    A_1x_0&=A_1A_0A_{0}^{\#}x_0-A_0A_{0}^{\#}A_1x_0+A_0A_{0}^{\#}A_1A_0A_{0}^{\#}x_0\\
    \implies A_1x_0&=A_0A_{0}^{\#}A_1x_0. ~~~(\text{since } A_0x_0=0\implies A_0^{\#}x_0=0)
   \end{align*}
  Therefore, $A_{0}x_1+A_1x_0\in R(A_0)$. Thus, $A_{0}x_1+A_1x_0\in R(A_0)\cap N(A_0)=\{0\}$, i.e., 
  \begin{align}\label{4.6}
      A_{0}x_1+A_1x_0=0.
  \end{align}
   Similarly, from \eqref{4.4},
  \begin{align}\label{4.7}
      A_{0}x_2+A_2x_0=0.
   \end{align}
  From \eqref{4.2}, \eqref{4.5}, \eqref{4.6} and \eqref{4.7}, we get
  \begin{align*}
  &A_{0}^{2}x_3+A_0A_2x_1+A_0A_1x_2+A_0A_3x_0=0\\
  \implies &A_{0}(A_{0}x_3+A_2x_1+A_1x_2+A_3x_0)=0\\
  \implies &A_{0}x_3+A_2x_1+A_1x_2+A_3x_0\in N(A_{0}).
  \end{align*}
  Now, 
   \begin{align*}
    &A_{0}x_3+A_2x_1+A_1x_2+(A_1A_0^{\#}A_2+A_2A_0^{\#}A_1)x_0 ~~(\text{since } A_3=A_1A_0^{\#}A_2+A_2A_0^{\#}A_1)\\
    = &A_{0}x_3+A_2x_1+A_1x_2-A_1A_0^{\#}A_0x_2-A_2A_0^{\#}A_0x_1 ~~~(\text{using }  \eqref{4.6} \text{ and } \eqref{4.7})\\
    = &A_{0}x_3+A_1(I-A_0^{\#}A_0)x_2+A_2(I-A_0^{\#}A_0)x_1\\
    = &A_{0}x_3+A_0A_0^{\#}A_1(I-A_0^{\#}A_0)x_2+A_0A_0^{\#}A_2(I-A_0^{\#}A_0)x_1\in R(A_0)\\
    &~~~~~(\text{using } (I-A_0A_0^{\#})A_1(I-A_0A_0^{\#})=0 \text{ and } (I-A_0A_0^{\#})A_2(I-A_0A_0^{\#})=0)\\
  \implies &A_{0}x_3+A_2x_1+A_1x_2+A_3x_0\in N(A_{0})\cap R(A_0)\\
  \implies &A_{0}x_3+A_2x_1+A_1x_2+A_3x_0=0.    
  \end{align*}
  Thus, $\tilde{w}=A_0x_0+\epsilon_1(A_0x_1+A_1x_0)+\epsilon_2(A_0x_2+A_2x_0)+\epsilon_1\epsilon_2(A_0x_3+A_1x_2+A_2x_1+A_3x_0)=0$. Therefore, 
  $R(\tilde{A})\cap N(\tilde{A})=\{0\}$.
\end{proof}
Zhong and Zhang \cite{dualgroup} introduced the norm on $\widehat{\R}^n$ as:
       $$\|\widehat{x}\|=\sqrt{\|u\|_{2}^2+\|v\|_{2}^2}.$$
       Here, we extend the above norm for hyper-dual vector $\tilde{x}=\widehat{u}+\epsilon_2 \widehat{v}=u_{0}+\epsilon_1 u_{1}+ \epsilon_2 v_{2}+\epsilon_1 \epsilon_2v_{3}$ and define it as
       \begin{equation}\label{eq4.7}
           \|\tilde{x}\|_{h}=\sqrt{ \| \widehat{u}\|^2+\|\widehat{v}\|^{2}}=\sqrt{\|u_{0}\|_{2}^2+\|u_{1}\|_{2}^2+\|v_{2}\|_{2}^2+\|v_{3}\|_{2}^2}.
        \end{equation} 
 It is worth noting that \eqref{eq4.7} indeed defines a norm.
\begin{theorem}
    Suppose that $\tilde{x}, \tilde{y} \in \tilde{\mathbb{R}}^n$. Then, the following hold
    \begin{enumerate}[(i)]
        \item $\|\tilde{x}\|_{h}\geq0$, and $\|\tilde{x}\|_{h}=0$ if and only if $\tilde{x}=0$.
        \item  $\|a\tilde{x}\|_{h}=|a|\|\tilde{x}\|_{h}$, where $a \in \mathbb{R}$.
        \item $\|\tilde{x}+\tilde{y}\|_{h}\leq \|\tilde{x}\|_{h}+\|\tilde{y}\|_{h}$.
    \end{enumerate}
\end{theorem}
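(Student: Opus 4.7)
The plan is to recognize that the map $\tilde{x} = u_0 + \epsilon u_1 + \epsilon^* v_2 + \epsilon\epsilon^* v_3 \mapsto (u_0, u_1, v_2, v_3) \in \mathbb{R}^{4n}$ is a bijection of real vector spaces, and under this identification $\|\tilde{x}\|_h$ is precisely the standard Euclidean norm on $\mathbb{R}^{4n}$. Once this is in place, each of (i)--(iii) is inherited immediately from the corresponding property of the Euclidean norm. I will nonetheless give self-contained verifications of the three axioms in the hyper-dual language, because the paper phrases everything directly in terms of the components $u_0, u_1, v_2, v_3$.

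For (i), the quantity $\|u_0\|_2^2 + \|u_1\|_2^2 + \|v_2\|_2^2 + \|v_3\|_2^2$ is a sum of four non-negative real numbers, so $\|\tilde{x}\|_h \ge 0$. Moreover, $\|\tilde{x}\|_h = 0$ forces each of $\|u_i\|_2$ and $\|v_j\|_2$ to vanish, hence $u_0=u_1=v_2=v_3=0$, i.e.\ $\tilde{x}=0$; conversely $\tilde{x}=0$ trivially gives $\|\tilde{x}\|_h=0$. For (ii), observe that for $a\in\mathbb{R}$ one has $a\tilde{x} = (au_0) + \epsilon(au_1) + \epsilon^*(av_2) + \epsilon\epsilon^*(av_3)$, so
\begin{align*}
\|a\tilde{x}\|_h^2 &= \|au_0\|_2^2 + \|au_1\|_2^2 + \|av_2\|_2^2 + \|av_3\|_2^2 \\
&= a^2\bigl(\|u_0\|_2^2 + \|u_1\|_2^2 + \|v_2\|_2^2 + \|v_3\|_2^2\bigr) = a^2\|\tilde{x}\|_h^2,
\end{align*}
and taking square roots yields $\|a\tilde{x}\|_h = |a|\,\|\tilde{x}\|_h$.

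For (iii), write $\tilde{y} = w_0 + \epsilon w_1 + \epsilon^* z_2 + \epsilon\epsilon^* z_3$. The claim $\|\tilde{x}+\tilde{y}\|_h \le \|\tilde{x}\|_h + \|\tilde{y}\|_h$ is equivalent, after squaring and simplifying, to the Cauchy--Schwarz-type inequality
$$\|u_0+w_0\|_2^2 + \|u_1+w_1\|_2^2 + \|v_2+z_2\|_2^2 + \|v_3+z_3\|_2^2 \le \bigl(\|\tilde{x}\|_h + \|\tilde{y}\|_h\bigr)^2.$$
The cleanest route is to stack the components into $X=(u_0,u_1,v_2,v_3)$ and $Y=(w_0,w_1,z_2,z_3)$ in $\mathbb{R}^{4n}$ so that $\|\tilde{x}\|_h = \|X\|_2$, $\|\tilde{y}\|_h = \|Y\|_2$, and $\|\tilde{x}+\tilde{y}\|_h = \|X+Y\|_2$; the desired inequality is then the ordinary Euclidean triangle inequality $\|X+Y\|_2 \le \|X\|_2 + \|Y\|_2$. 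There is no real obstacle in this proof; the only thing to be careful about is treating all four real components on the same footing and not accidentally privileging the primal block, since the hyper-dual units do not affect the definition of $\|\cdot\|_h$ as given in \eqref{eq4.7}.
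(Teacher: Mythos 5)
Your proof is correct, and it takes a cleaner route than the paper's. You flatten $\tilde{x}$ into the vector $(u_0,u_1,v_2,v_3)\in\mathbb{R}^{4n}$ and observe that $\|\cdot\|_h$ is literally the Euclidean norm under this identification, so all three axioms are inherited at once from $\mathbb{R}^{4n}$. The paper instead works blockwise at the level of the two dual components $\hat{u}$ and $\hat{v}$: for the triangle inequality it bounds $\|\hat{u}_1+\hat{u}_2\|^2+\|\hat{v}_1+\hat{v}_2\|^2$ by $(\|\hat{u}_1\|+\|\hat{u}_2\|)^2+(\|\hat{v}_1\|+\|\hat{v}_2\|)^2$, expands, and then needs a Cauchy--Schwarz step $\|\hat{u}_1\|\|\hat{u}_2\|+\|\hat{v}_1\|\|\hat{v}_2\|\le\sqrt{(\|\hat{u}_1\|^2+\|\hat{v}_1\|^2)(\|\hat{u}_2\|^2+\|\hat{v}_2\|^2)}$ to reassemble the right-hand side into $(\|\tilde{x}\|_h+\|\tilde{y}\|_h)^2$. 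That two-stage argument is the standard proof that the $\ell^2$-combination of two norms is a norm, but as printed the paper's chain mispairs the indices (grouping $\hat{u}_1$ with $\hat{u}_2$ rather than $\hat{u}_1$ with $\hat{v}_1$ in the final lines), and its part (i) merely restates the claim. Your identification with $\mathbb{R}^{4n}$ sidesteps both the Cauchy--Schwarz step and the bookkeeping entirely; what the paper's blockwise argument buys in exchange is that it would still go through if the primal and hyper-dual parts carried norms that were not themselves Euclidean, a generality not needed here.
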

\begin{proof} 
  Let $\tilde{x}=\widehat{u}_{1}+\epsilon_2\widehat{v}_{1}, \tilde{y}=\widehat{u}_{2}+\epsilon_2\widehat{v}_{2} \in \tilde{\mathbb{R}}^n$. Then,  
\begin{enumerate}[(i)]
    \item $\|\tilde{x}\|_{h}\geq0$, and $\|\tilde{x}\|_{h}=0$ if and only if $\tilde{x}=0$.
    \item For $a \in \mathbb{R}$, $\|a\tilde{x}\|_{h}=\sqrt{\|a\widehat{u}_{1}\|^2+\|a\widehat{v}_{1}\|^2}=|a|\sqrt{\|\widehat{u}_{1}\|^2+\|\widehat{v}_{1}\|^2}=|a|\|\tilde{x}\|$.
    \item \begin{align*}
\|\tilde{x}+\tilde{y}\|_{h}^2=&\|\widehat{u}_{1}+\widehat{u}_{2}\|^2+\|\widehat{v}_{1}+\widehat{v}_{2}\|^2\\
            \leq&(\|\widehat{u}_{1}\|+\|\widehat{u}_{2}\|)^2+(\|\widehat{v}_{1}\|+\|\widehat{v}_{2}\|)^2\\
            \leq& \|\widehat{u}_{1}\|^2+\|\widehat{u}_{2}\|^2+\|\widehat{v}_{1}\|^2+\|\widehat{v}_{2}\|^2+2\|\widehat{u}_{1}\|\widehat{u}_{2}\|+2\|\widehat{v}_{1}\|\widehat{v}_{2}\|\\
            \leq& \|\widehat{u}_{1}\|^2+\|\widehat{u}_{2}\|^2+\|\widehat{v}_{1}\|^2+\|\widehat{v}_{2}\|^2\\
            &+2\sqrt{(\|\widehat{u}_{1}\|^2+\|\widehat{u}_{2}\|^2)(\|\widehat{v}_{1}\|^2+\|\widehat{v}_{2}\|^2)}\\
            =&(\sqrt{\|\widehat{u}_{1}\|^2+\|\widehat{u}_{2}\|^2}+\sqrt{\|\widehat{v}_{1}\|^2+\|\widehat{v}_{2}\|^2})^2\\
            =&(\|\tilde{x}\|_{h}+\|\tilde{y}\|_{h})^2,\\
            i.e., \|\tilde{x}+\tilde{y}\|_{h}\leq\|\tilde{x}\|_{h}+\|\tilde{y}\|_{h}.
         \end{align*} 
\end{enumerate}
\end{proof}
Now, we define the $P$-norm of a hyper-dual vector $\tilde{x}=u_{0}+\epsilon_1 u_{1}+ \epsilon_2 v_{2}+\epsilon_1 \epsilon_2v_{3}$ as
$$\|\tilde{x}\|_{P}=\sqrt{\|P^{-1}u_{0}\|_{2}^2+\|P^{-1}u_{1}\|_{2}^2+\|P^{-1}v_{2}\|_{2}^2+\|P^{-1}v_{3}\|_{2}^2}.$$
Consider the inconsistent linear system $Ax=b$ such that $A\in \mathbb{R}^{n\times n}$ and $b\in \mathbb{R}^n$. The element $x^*\in \mathbb{R}^n$ is said to be a least-squares solution with respect to $P$-norm if $||Ax^*-b||_P\leq ||Ax-b||_P$. Among the least-squares solution, the one having minimum $P$-norm is said to be the minimal $P$-norm solution. Wei \cite{wei:1998} showed that $A^\#b$ is the minimal $P$-norm solution of the consistent singular linear system $Ax=b$. Analogously, we prove that the hyper-dual group solution $\tilde{A}^\#\tilde b$ is the minimal $P$-norm solution to the hyper-dual linear system $\tilde A\tilde x=\tilde b$. Additionally, we prove that when the same system is inconsistent, $\tilde{A}^\#\tilde b$ is a least-squares solution with respect to a $P$-norm under suitable conditions. The existence of such $P$ for which the hyper-dual group inverse is the minimal $P$-norm least-squares solution is shown in Lemma \ref{lem4.5}.

The following example demonstrates that even when the hyper-dual group inverse exists, $\tilde{A}^{\#}\tilde{b}$ may not have the minimal $P$-norm among the set $\tilde{x}=\tilde{A}^{\#}\tilde{b}+(I-\tilde{A}\tilde{A}^{\#})\tilde{z}$, where $\tilde{z}\in \tilde{\mathbb{R}}^n$.

\begin{example}
Let $\tilde{A}\tilde{x}=\tilde{b}$ be a consistent hyper-dual linear system such that  $\tilde{A}
=
A_{0}+\epsilon_1 A_{1}+\epsilon_2 A_{2}+\epsilon_1 \epsilon_2 A_{3},$ and $\tilde b
= b_{0}+\epsilon b_{1}+\epsilon b_{2}+\epsilon b_{3},$ where
$$A_{0}= \begin{bmatrix}
1 & 0\\
0 & 0
\end{bmatrix}, ~
A_{1}= \begin{bmatrix}
1 & -1\\
1 & 0
\end{bmatrix}, ~A_{2}= \begin{bmatrix}
2 & 1\\
1 & 0
\end{bmatrix}, ~
A_{3}= \begin{bmatrix}
4 & -1\\
3 & 0
\end{bmatrix},$$
and 
$$b_0=\begin{bmatrix}
1\\
0
\end{bmatrix}, b_{1}=
\begin{bmatrix}
1\\
1
\end{bmatrix}, b_{3}=
\begin{bmatrix}
0\\
1
\end{bmatrix}, b_{4}=
\begin{bmatrix}
1\\
0
\end{bmatrix}.$$
Clearly,
\begin{align*} A_0^\#=
\begin{bmatrix}
1&0\\
0&0
\end{bmatrix} \text{ and }
  I-A_0A_0^\#
=
\begin{bmatrix}
0&0\\
0&1
\end{bmatrix}.  
\end{align*}
Now,
$A_1A_0^\#
=\begin{bmatrix}
1&0\\
1&0
\end{bmatrix} \text{ and }A_2A_0^\#
=
\begin{bmatrix}
2&0\\
1&0
\end{bmatrix}.$
Thus,
\begin{align*}
A_1A_0^\#A_2
=
\begin{bmatrix}
2&1\\
2&1
\end{bmatrix} \text{ and }
A_2A_0^\#A_1
=
\begin{bmatrix}
2&-2\\
1&-1
\end{bmatrix}.    
\end{align*}
Further, we have
$$(I-A_0A_0^\#)A_1
=
\begin{bmatrix}
0&0\\
0&1
\end{bmatrix}
\begin{bmatrix}
1&-1\\
1&0
\end{bmatrix}
=
\begin{bmatrix}
0&0\\
1&0
\end{bmatrix}.$$
Then,
$$(I-A_0A_0^\#)A_1(I-A_0A_0^\#)
=
\begin{bmatrix}
0&0\\
1&0
\end{bmatrix}
\begin{bmatrix}
0&0\\
0&1
\end{bmatrix}
=
\begin{bmatrix}
0&0\\
0&0
\end{bmatrix}.$$
Also,
$$(I-A_0A_0^\#)A_2
=
\begin{bmatrix}
0&0\\
0&1
\end{bmatrix}
\begin{bmatrix}
2&1\\
1&0
\end{bmatrix}
=
\begin{bmatrix}
0&0\\
1&0
\end{bmatrix}$$
and
$$(I-A_0A_0^\#)A_2(I-A_0A_0^\#)
=
\begin{bmatrix}
0&0\\
1&0
\end{bmatrix}
\begin{bmatrix}
0&0\\
0&1
\end{bmatrix}
=
\begin{bmatrix}
0&0\\
0&0
\end{bmatrix}.$$
Moreover,
$$A_3-A_1A_0^\#A_2-A_2A_0^\#A_1
=
\begin{bmatrix}
0&0\\
0&0
\end{bmatrix}.$$
Hence,
$$(I-A_0A_0^\#)A_1(I-A_0A_0^\#)
=
(I-A_0A_0^\#)A_2(I-A_0A_0^\#)
=
A_3-A_1A_0^\#A_2-A_2A_0^\#A_1
=0.$$
Therefore, by Theorem \ref{th2.4} (iv), the HDGGI of $\tilde A$ exists and is given by
$$\tilde{A}^{\#}
=
A_0^\#
+\epsilon_1B_1
+\epsilon_2B_2
+\epsilon_1\epsilon_2B_3,$$
where
\begin{align*}
   A_0^\#=
\begin{bmatrix}
1&0\\
0&0
\end{bmatrix},~
B_1=
\begin{bmatrix}
-1&-1\\
1&0
\end{bmatrix},~B_2=
\begin{bmatrix}
-2&1\\
1&0
\end{bmatrix},
\text{ and }
B_3=
\begin{bmatrix}
0&1\\
-2&0
\end{bmatrix}. 
\end{align*}
Let $\tilde x^*=\tilde A^\#\tilde b=x_0+\epsilon_1x_1+\epsilon_2x_2+\epsilon_1\epsilon_2x_3$. On computation, we get
$$x_0=A_0^\#b_0
=
\begin{bmatrix}
1\\
0
\end{bmatrix},~~  x_1=A_0^\#b_1+B_1b_0
    =\begin{bmatrix}
0\\
1
\end{bmatrix},~~x_2=A_0^\#b_2+B_2b_0=
\begin{bmatrix}
-2\\
1
\end{bmatrix},$$
and 
$$x_3=A_0^\#b_3+B_1b_2+B_2b_1+B_3b_0=
\begin{bmatrix}
-1\\
-2
\end{bmatrix}.$$
Therefore,
$$\tilde x^*
=
\begin{bmatrix}
1\\
0
\end{bmatrix}
+\epsilon_1
\begin{bmatrix}
0\\
1
\end{bmatrix}
+\epsilon_2
\begin{bmatrix}
-2\\
1
\end{bmatrix}
+\epsilon_1\epsilon_2
\begin{bmatrix}
-1\\
-2
\end{bmatrix}.$$
Taking $ P=I$, we have
$$\|\tilde x^*\|_P
=
\sqrt{
\left\|
\begin{bmatrix}
1\\
0
\end{bmatrix}
\right\|_2^2
+
\left\|
\begin{bmatrix}
0\\
1
\end{bmatrix}
\right\|_2^2
+
\left\|
\begin{bmatrix}
-2\\
1
\end{bmatrix}
\right\|_2^2
+
\left\|
\begin{bmatrix}
-1\\
-2
\end{bmatrix}
\right\|_2^2
}=\sqrt{12}.$$
Hence,
$\|\tilde x^*\|_P=2\sqrt{3}.$
Next,
$$\tilde A\tilde A^\#
=
A_0A_0^\#
+\epsilon_1(A_0B_1+A_1A_0^\#)
+\epsilon_2(A_0B_2+A_2A_0^\#).$$
Since
\begin{align*}
A_0A_0^\#
=
\begin{bmatrix}
1&0\\
0&0
\end{bmatrix},~
A_0B_1+A_1A_0^\#
=
\begin{bmatrix}
0&-1\\
1&0
\end{bmatrix}, \text{ and }
A_0B_2+A_2A_0^\#
=
\begin{bmatrix}
0&1\\
1&0
\end{bmatrix} \end{align*}
it follows that
$$I-\tilde A\tilde A^\#
=
\begin{bmatrix}
0&0\\
0&1
\end{bmatrix}
+\epsilon_1
\begin{bmatrix}
0&1\\
-1&0
\end{bmatrix}
+\epsilon_2
\begin{bmatrix}
0&-1\\
-1&0
\end{bmatrix}.$$
The general solution of $\tilde A\tilde x=\tilde b$ is
$\tilde x
=
\tilde A^\#\tilde b
+
(I-\tilde A\tilde A^\#)\tilde z,~ \text{where } 
\tilde z\in\tilde{\mathbb R}^2.$ 
Assume that
$$\tilde z
=
\begin{bmatrix}
z_1\\
z_2
\end{bmatrix}
+\epsilon_1
\begin{bmatrix}
u_1\\
u_2
\end{bmatrix}
+\epsilon_2
\begin{bmatrix}
v_1\\
v_2
\end{bmatrix}
+\epsilon_1\epsilon_2
\begin{bmatrix}
w_1\\
w_2
\end{bmatrix}.$$
Then, we have
$$(I-\tilde A\tilde A^\#)\tilde z
=
\begin{bmatrix}
0\\
z_2
\end{bmatrix}
+\epsilon_1
\begin{bmatrix}
z_2\\
u_2-z_1
\end{bmatrix}
+\epsilon_2
\begin{bmatrix}
-z_2\\
v_2-z_1
\end{bmatrix}+\epsilon_1\epsilon_2\begin{bmatrix}
v_2-u_2\\
w_2-v_1-u_1
\end{bmatrix}$$ and
$$\tilde x^*
=
\begin{bmatrix}
1\\
0
\end{bmatrix}
+\epsilon_1
\begin{bmatrix}
0\\
1
\end{bmatrix}
+\epsilon_2
\begin{bmatrix}
-2\\
1
\end{bmatrix}
+\epsilon_1\epsilon_2
\begin{bmatrix}
-1\\
-2
\end{bmatrix}.$$
Let $\tilde w=
(I-\tilde A\tilde A^\#)\tilde z.$ Then,
$\tilde x^*+\tilde w
=
\begin{bmatrix}
1\\
z_2
\end{bmatrix}
+\epsilon_1
\begin{bmatrix}
z_2\\
1+u_2-z_2
\end{bmatrix}
+\epsilon_2
\begin{bmatrix}
-2-z_2\\
1+v_2-z_1
\end{bmatrix}
+\epsilon_1\epsilon_2
\begin{bmatrix}
-1+v_2-u_2\\
-2+w_2-v_1-u_1
\end{bmatrix}.$ Taking $P=I,$ the $P$-norm is given as
$$\|\tilde x^*+\tilde w\|_P^2
=1+2z_2^2+(1+u_2-z_1)^2 +(-2-z_2)^2+(1+v_2-z_1)^2 +(-1+v_2-u_2)^2 +(-2+w_2-v_1-u_1)^2.$$ If $z_1=1, z_2=-\dfrac{2}{3}, u_1=0, u_2=-\dfrac{1}{3}, v_1=0, \text{ and } v_2=\dfrac{1}{3}, w_2=2$, then we obtain
\begin{align*}
\|\tilde x^*+\tilde{w}\|_P
&=\sqrt{4}=2.\end{align*}
Therefore, $\|\tilde x^*\|_P^2>
\|\tilde x^*+\tilde{w}\|_P^2.$
Hence 
$\tilde x^*=\tilde A^\#\tilde b$  is not the minimal $P$-norm solution of the hyper-dual linear system $\tilde A\tilde x=\tilde b$.
\end{example}

The next example demonstrates that the group inverse solution may not be a least-squares solution for the hyper-dual linear system.

\begin{example} Let $\tilde{A}\tilde{x}=\tilde{b}$, where $\tilde{b}=\begin{bmatrix}
    0\\1
\end{bmatrix}$ and $\tilde{A}=A_0+\epsilon_1A_1+\epsilon_2A_2+\epsilon_1\epsilon_2A_3$ be such that \begin{align*}
    A_0=\begin{bmatrix}
        1&0\\0&0
    \end{bmatrix}, 
    ~A_1=\begin{bmatrix}
        0&1\\0&0
    \end{bmatrix} ~A_2=\begin{bmatrix}
        0&0\\1&0
    \end{bmatrix} \text{ and }A_3=\begin{bmatrix}
        -1&0\\0&1
    \end{bmatrix},
\end{align*}
i.e.,
\begin{align*}
    \tilde{A}=\begin{bmatrix}
        1-\epsilon_1\epsilon_2&\epsilon_1\\
        \epsilon_2&\epsilon_1\epsilon_2
    \end{bmatrix}.
\end{align*}
Clearly, $ind(\tilde{A})=1$ and $\tilde{A}^{\#}=\tilde{A}$. We observe that our system is inconsistent. We evaluate the group-inverse solution
\begin{align*}
\tilde{x}=\tilde{A}^{\#}\tilde{b}&=\tilde{A}\tilde{b}\\
    &=\begin{bmatrix}
        1-\epsilon_1\epsilon_2&\epsilon_1\\
        \epsilon_2&\epsilon_1\epsilon_2
    \end{bmatrix}\begin{bmatrix}
    0\\1
\end{bmatrix}\\
&=\begin{bmatrix}
    \epsilon_1\\
    \epsilon_1\epsilon_2
\end{bmatrix}.
\end{align*}
Then, the residual vector is
\begin{align*}
    \tilde{r}&=\tilde{b}-\tilde{A}\tilde{x}=\tilde{b}-\tilde{A}(\tilde{A}^{\#}\tilde{b})=\tilde{b}-\tilde{A}(\tilde{A}\tilde{b})=\tilde{b}-\tilde{A}^2\tilde{b}=\tilde{b}-\tilde{A}\tilde{b}\\
    &=\begin{bmatrix}
        0\\1
    \end{bmatrix}-\begin{bmatrix}
    \epsilon_1\\
    \epsilon_1\epsilon_2
\end{bmatrix}=\begin{bmatrix}
    -\epsilon_1\\
    1-\epsilon_1\epsilon_2
\end{bmatrix}.
\end{align*}
We show that the group inverse $\tilde{A}^{\#}\tilde{b}$ fails to minimize the $P$-norm of the residual. If $P=I$, then the  $P$-norm of the residual vector $\tilde{r}$ is
\begin{align*}
    \|\tilde{r}\|_P^2=1^2+(-1)^2+0^2+(-1)^2=3.
\end{align*}
To demonstrate that this is not the minimal solution, consider the trivial zero vector $\tilde{x}_0 = 0$. Its residual hyper-dual vector is $\tilde{r}_0 = \tilde{b} - \tilde{A}\tilde{x}_0 = \tilde{b}$. Then, the $P$-norm of this residual hyper-dual vector is
$$\|\tilde{r}_{0}\|_P^2 = 1.$$
Since $1 < 3$, we have shown that for $\tilde{x}_0=0$ and $\tilde{x}^*=\tilde{A}^{\#}\tilde{b}$, $\|\tilde{b} - \tilde{A}\tilde{x}_0\|_P^2 < \|\tilde{b} - \tilde{A}\tilde{x}^*\|_P^2$. Therefore, the group-inverse solution is not a least-squares solution with respect to the $P$-norm.
\end{example}

Now, we provide some suitable conditions under which $\tilde{x}=\tilde{A^\#}\tilde{b}$ is the minimal $P$-norm least-squares solution to the hyper-dual linear system. For this, we first establish the existence of such a matrix $P$ in the following lemma.
\begin{lemma}\label{lem4.5}
Let
$
\tilde A
=
\widehat A+\epsilon_2\widehat A_0
=
A_0+\epsilon_1A_1+\epsilon_2A_2+\epsilon_1\epsilon_2A_3$.
Suppose that \(\tilde A^\#\) exists with $
\widehat A^\#
=
A_0^\#-\epsilon_1A_0^\#A_1A_0^\#
$
and
$
\tilde A^\#
=
\widehat A^\#-\epsilon_2\widehat A^\#\widehat A_0\widehat A^\#.
$
Then, there exists a nonsingular matrix \(P\) such that
$
P^{-1}\tilde A P
=
\begin{bmatrix}
\tilde C&0\\
0&0
\end{bmatrix},
$
where
$
\tilde C
=
C_0+\epsilon_1C_1+\epsilon_2C_2+\epsilon_1\epsilon_2C_3.
$
\end{lemma}

\begin{proof}
Since \(A_0^\#\) exists, there exists a nonsingular matrix \(P\) such that
$
P^{-1}A_0P
=
\begin{bmatrix}
C_0&0\\
0&0
\end{bmatrix},
$
where \(C_0\) is nonsingular. Therefore,
$$
P^{-1}A_0^\#P
=
\begin{bmatrix}
C_0^{-1}&0\\
0&0
\end{bmatrix}.
$$
If
$
P^{-1}A_1P
=
\begin{bmatrix}
B_1&B_2\\
B_3&B_4
\end{bmatrix},
$ then by Theorem \ref{cor3.2},
$
A_0A_0^\#A_1=A_1A_0A_0^\#=A_1. 
$
Also,
$
P^{-1}A_0A_0^\#P
=
\begin{bmatrix}
I&0\\
0&0
\end{bmatrix}.
$
Hence,
$$
P^{-1}(A_0A_0^\#A_1)P
=
\begin{bmatrix}
I&0\\
0&0
\end{bmatrix}
\begin{bmatrix}
B_1&B_2\\
B_3&B_4
\end{bmatrix}
=
\begin{bmatrix}
B_1&B_2\\
0&0
\end{bmatrix}.
$$
But \(A_0A_0^\#A_1=A_1\), which implies that
$
B_3=0$ and
$B_4=0.$ Similarly,
$$
P^{-1}(A_1A_0A_0^\#)P
=
\begin{bmatrix}
B_1&B_2\\
0&0
\end{bmatrix}
\begin{bmatrix}
I&0\\
0&0
\end{bmatrix}
=
\begin{bmatrix}
B_1&0\\
0&0
\end{bmatrix}.
$$
Since \(A_1A_0A_0^\#=A_1\), we get
$B_2=0.$
Therefore,
$$
P^{-1}A_1P
=
\begin{bmatrix}
C_1&0\\
0&0
\end{bmatrix},
$$
where \(C_1=B_1\). Consequently,
$$
P^{-1}\widehat A P
=
P^{-1}A_0P+\epsilon_1P^{-1}A_1P=\begin{bmatrix}
C_0&0\\
0&0
\end{bmatrix}
+
\epsilon_1
\begin{bmatrix}
C_1&0\\
0&0
\end{bmatrix}
=
\begin{bmatrix}
\widehat C&0\\
0&0
\end{bmatrix},
$$
where
$
\widehat C=C_0+\epsilon_1C_1.
$ Hence,
$
P^{-1}\widehat A^\#P
=
\begin{bmatrix}
\widehat C^{-1}&0\\
0&0
\end{bmatrix}.
$\\
Now, let
$
P^{-1}\widehat A_0P
=
\begin{bmatrix}
Y_1&Y_2\\
Y_3&Y_4
\end{bmatrix}.
$
Since \(\tilde A^\#\) exists, 
from Theorem \ref{th2.4}, we have
$
(I-\widehat A\widehat A^\#)\widehat A_0(I-\widehat A\widehat A^\#)=0.
$
Thus, 
$$
P^{-1}(I-\widehat A\widehat A^\#)P
=
\begin{bmatrix}
0&0\\
0&I
\end{bmatrix}.
$$
Therefore,
$$
\begin{bmatrix}
0&0\\
0&I
\end{bmatrix}
\begin{bmatrix}
Y_1&Y_2\\
Y_3&Y_4
\end{bmatrix}
\begin{bmatrix}
0&0\\
0&I
\end{bmatrix}
=
\begin{bmatrix}
0&0\\
0&Y_4
\end{bmatrix}
=0.
$$
Hence,
$
Y_4=0.
$ Thus,
$$
P^{-1}\widehat A_0P
=
\begin{bmatrix}
Y_1&Y_2\\
Y_3&0
\end{bmatrix}.
$$
Now, from Theorem \ref{th2.1}, the assumption
$
\tilde A^\#
=
\widehat A^\#-\epsilon_2\widehat A^\#\widehat A_0\widehat A^\#
$ gives $$
(\widehat A^\#)^2\widehat A_0(I-\widehat A\widehat A^\#)
+
(I-\widehat A\widehat A^\#)\widehat A_0(\widehat A^\#)^2
=0.
$$ Since
$
P^{-1}(\widehat A^\#)^2P
=
\begin{bmatrix}
\widehat C^{-2}&0\\
0&0
\end{bmatrix},
$
we get
$$
\begin{bmatrix}
\widehat C^{-2}&0\\
0&0
\end{bmatrix}
\begin{bmatrix}
Y_1&Y_2\\
Y_3&0
\end{bmatrix}
\begin{bmatrix}
0&0\\
0&I
\end{bmatrix}
+
\begin{bmatrix}
0&0\\
0&I
\end{bmatrix}
\begin{bmatrix}
Y_1&Y_2\\
Y_3&0
\end{bmatrix}
\begin{bmatrix}
\widehat C^{-2}&0\\
0&0
\end{bmatrix}
=0,
$$
which further gives
$$
\begin{bmatrix}
0&\widehat C^{-2}Y_2\\
0&0
\end{bmatrix}
+
\begin{bmatrix}
0&0\\
Y_3\widehat C^{-2}&0
\end{bmatrix}=\begin{bmatrix}
0&\widehat C^{-2}Y_2\\
Y_3\widehat C^{-2}&0
\end{bmatrix}
=0.
$$ Therefore,
$
Y_2=0$
and $Y_3=0.
$
Hence,
$
P^{-1}\widehat A_0P
=
\begin{bmatrix}
Y_1&0\\
0&0
\end{bmatrix}.$
Let $Y_1 = C_2+\epsilon_1C_3$, then
$$
P^{-1}\widehat A_0P=P^{-1}A_2P+\epsilon_1P^{-1}A_3P
=
\begin{bmatrix}
C_2+\epsilon_1C_3&0\\
0&0
\end{bmatrix}.
$$
Comparing the real part and the dual-part, we obtain
$
P^{-1}A_2P
=
\begin{bmatrix}
C_2&0\\
0&0
\end{bmatrix},
$
and
$
P^{-1}A_3P
=
\begin{bmatrix}
C_3&0\\
0&0
\end{bmatrix}.
$ Hence,
$
P^{-1}A_iP
=
\begin{bmatrix}
C_i&0\\
0&0
\end{bmatrix},~ i=1,2,3.
$ Finally, we have
$$
P^{-1}\tilde A P
=
P^{-1}A_0P
+
\epsilon_1P^{-1}A_1P
+
\epsilon_2P^{-1}A_2P
+
\epsilon_1\epsilon_2P^{-1}A_3P.
$$
Substituting the expressions, we get
$$
P^{-1}\tilde A P
=
\begin{bmatrix}
C_0&0\\
0&0
\end{bmatrix}
+
\epsilon_1
\begin{bmatrix}
C_1&0\\
0&0
\end{bmatrix}
+
\epsilon_2
\begin{bmatrix}
C_2&0\\
0&0
\end{bmatrix}
+
\epsilon_1\epsilon_2
\begin{bmatrix}
C_3&0\\
0&0
\end{bmatrix}.
$$
Therefore,
$$
P^{-1}\tilde A P
=
\begin{bmatrix}
\tilde C&0\\
0&0
\end{bmatrix},
$$
where
$
\tilde C
=
C_0+\epsilon_1C_1+\epsilon_2C_2+\epsilon_1\epsilon_2C_3.
$
\end{proof}

The next result provides conditions under which we get the minimal $P$-norm least-squares solution of the hyper-dual linear system.
\begin{theorem} Let $\tilde A = \widehat A+\epsilon_2\widehat A_0 = A_0+\epsilon_1A_1+\epsilon_2A_2+\epsilon_1\epsilon_2A_3 \in \tilde{\mathbb{R}}^{n\times n}, \tilde b\in \tilde{\mathbb{R}}^n$ be such that $\tilde A^{\#}$ exists with $\widehat A^{\#} = A_0^{\#}-\epsilon_1A_0^{\#}A_1A_0^{\#}, $ and $\tilde A^{\#} = \widehat A^{\#}-\epsilon_2\widehat A^{\#}\widehat A_0\widehat A^{\#}.$ Then \(\tilde x^*\) is a $P$-norm least-squares solution of $ \tilde A\tilde x=\tilde b$ if and only if $\tilde x^*$ satisfies the group normal equation $ \tilde A^2\tilde x^*=\tilde A\tilde b.$ Moreover, the hyper-dual group-inverse solution  $\tilde A^\#\tilde b$  is the unique minimal $P$-norm solution of the group normal equation. \end{theorem}

\begin{proof} Since $\widehat A^{\#} = A_0^\#-\epsilon_1A_0^{\#}A_1A_0^{\#}$ and $\tilde A^\# = \widehat A^\#-\varepsilon_2\widehat A^\#\widehat A_0\widehat A^\#,$ from Lemma \ref{lem4.5} there exists a nonsingular matrix $P$ such that $P^{-1}\tilde{A}P = \begin{bmatrix} \tilde{C} & 0\\ 0 & 0 \end{bmatrix},$ where $\tilde{C} = C_0+\epsilon_1C_1+\epsilon_2C_2+\epsilon_1\epsilon_2C_3$ is nonsingular. Hence, we have $P^{-1}\tilde A^\# P = \begin{bmatrix} \tilde C^{-1} & 0\\ 0 & 0 \end{bmatrix}.$ Therefore, $P^{-1}(I-\tilde A\tilde A^\#)P = \begin{bmatrix} 0 & 0\\ 0 & I \end{bmatrix}.$ Now, we can  write $\tilde{b} = \tilde {A}\tilde A^{\#}\tilde{b} + (I-\tilde{A}\tilde{A}^{\#})\tilde{b}$, and thus $\tilde{b}-\tilde{ A}\tilde x = \tilde A\tilde A^{\#}\tilde b-\tilde{A}\tilde x + (I-\tilde{A}\tilde{A}^{\#})\tilde{b}.$ Let $\tilde{u} = \tilde{A}\tilde{A}^{\#}\tilde{b}-\tilde{A}\tilde{x}$ and   $\tilde v = (I-\tilde A\tilde A^\#)\tilde b.$ Then, we  have $\tilde{b}-\tilde{A}\tilde{x} = \tilde{u}+\tilde{v}.$ Let $\tilde u = u_0+\epsilon_1u_1+\epsilon_2u_2+\epsilon_1\epsilon_2u_3$ and $\tilde v = v_0+\epsilon_1v_1+\epsilon_2v_2+\epsilon_1\epsilon_2v_3.$ By the definition of the \(P\)-norm of hyper-dual vectors, 
\begin{align*}
    \|\tilde u+\tilde v\|_P^2 &= \|\tilde u\|_P^2+\|\tilde v\|_P^2 + 2(P^{-1}u_0)^T(P^{-1}v_0) + 2(P^{-1}u_1)^T(P^{-1}v_1)\\
    &~~+ 2(P^{-1}u_2)^T(P^{-1}v_2) + 2(P^{-1}u_3)^T(P^{-1}v_3).
\end{align*}
We now show that the last four terms of the above equation vanish. Since $\tilde u = \tilde A(\tilde A^\#\tilde b-\tilde x),$ we have $P^{-1}\tilde{u} = (P^{-1}\tilde{A} P)P^{-1}(\tilde{A}^{\#}\tilde{b}-\tilde{x}).$  Assume that $P^{-1}(\tilde{A}^{\#}\tilde{b}-\tilde{x})=\begin{bmatrix} \tilde{y_1}\\ \tilde{y_2} \end{bmatrix}$, then, using $P^{-1}\tilde{A} P = \begin{bmatrix} \tilde{C} & 0\\ 0 & 0 \end{bmatrix},$ it follows that $ P^{-1}\tilde{u} = \begin{bmatrix} \tilde{C}\tilde{y_1}\\ 0 \end{bmatrix}.$ Thus, the lower block of $P^{-1}\tilde{u}$ vanishes which further implies that the lower blocks of $P^{-1}u_i$ also vanishes for each \(i=0,1,2,3\).\\
\noindent
On the other hand, $\tilde v = (I-\tilde A\tilde A^\#)\tilde b$, thus
$P^{-1}\tilde v = P^{-1}(I-\tilde A\tilde A^\#)P\,P^{-1}\tilde b.$ Using $P^{-1}(I-\tilde A\tilde A^\#)P = \begin{bmatrix} 0 & 0\\ 0 & I \end{bmatrix}$ and assuming $P^{-1}\tilde{b}=\begin{bmatrix}
    \tilde{z_1}\\
    \tilde{z_2}
\end{bmatrix}$, we obtain  $P^{-1}\tilde v = \begin{bmatrix} 0\\ \tilde{z_2} \end{bmatrix}.$ Thus, for each \(i=0,1,2,3\), the upper blocks of $P^{-1}v_i$ vanish. Consequently, \[ (P^{-1}u_i)^T(P^{-1}v_i)=0, \text{ for } i=0,1,2,3. \]
Now, we have $\|\tilde b-\tilde A\tilde x\|_P^2 = \|\tilde u+\tilde v\|_P^2 = \|\tilde u\|_P^2+\|\tilde v\|_P^2$, i.e., \[ \|\tilde b-\tilde A\tilde x\|_P^2 = \|\tilde A\tilde A^\#\tilde b-\tilde A\tilde x\|_P^2 + \|(I-\tilde A\tilde A^\#)\tilde b\|_P^2. \] Since $\|\tilde A\tilde A^\#\tilde b-\tilde A\tilde x\|_P^2\geq 0,$ we get \[ \|\tilde b-\tilde A\tilde x\|_P^2 \geq \|(I-\tilde A\tilde A^\#)\tilde b\|_P^2. \] The above equality holds if and only if $\tilde A\tilde A^\#\tilde b-\tilde A\tilde x=0,$ i.e., $\tilde A\tilde x = \tilde A\tilde A^\#\tilde b,$ which is equivalent to the group normal equation 
$\tilde A^2\tilde x = \tilde A\tilde b.$
Thus \(\tilde x^*\) is a \(P\)-norm least-squares solution of $\tilde A\tilde x=\tilde b$ if and only if \(\tilde x^*\) satisfies the group normal equation $ \tilde A^2\tilde x^* = \tilde A\tilde b.$\\

Now  we prove the second part of the result. The general solution of the group normal equation is given by $\tilde x = \tilde A^\#\tilde b + (I-\tilde A\tilde A^\#)\tilde z,~\tilde z\in \tilde{\mathbb{R}}^n.$ Let $\tilde w=\tilde A^\#\tilde b,~\tilde q=(I-\tilde A\tilde A^\#)\tilde z.$ Then \[ \tilde x=\tilde w+\tilde q. \] By the same argument as above, we have \[ P^{-1}\tilde w = \begin{bmatrix} \tilde{w_1}\\ 0 \end{bmatrix}, \qquad P^{-1}\tilde q = \begin{bmatrix} 0\\ \tilde{q_2} \end{bmatrix},\quad \text{ for some }  \tilde{w_1}, \tilde{q_2} \in \mathbb{\tilde{R}}^n.
\] Hence, $\|\tilde x\|_P^2 = \|\tilde A^\#\tilde b\|_P^2 + \|(I-\tilde A\tilde A^\#)\tilde z\|_P^2.$ Thus $\|\tilde x\|_P^2 \geq \|\tilde A^\#\tilde b\|_P^2.$ The above equality holds if and only if $(I-\tilde A\tilde A^\#)\tilde z=0.$ Therefore, \(\tilde A^\#\tilde b\) is the unique minimal $P$-norm solution of the group normal equation. 
\end{proof}

\section{Reverse and forward order laws for particular form}\label{rm4}
In this section, we present the reverse-order law and the forward-order law for HDGGI and HDMPGI for particular form $\tilde{A}^{\#}=\widehat{A}^{\#}-\epsilon_2 \widehat{A}^{\#}\widehat{A}_0\widehat{A}^{\#}$ and $\tilde{A}^{\dagger}=\widehat{A}^{\dagger}-\epsilon_2 \widehat{A}^{\dagger}\widehat{A}_0\widehat{A}^{\dagger}$, respectively, of the hyper-dual matrix $\tilde{A}=\widehat{A}+\epsilon_2\widehat{A}_0$. We start this section with an example that shows that reverse and forward-order laws do not always hold for the DGGI. This will also conclude that the reverse-order law and the forward-order law for HDGGI do not hold.

Let $\widehat{A}=\begin{bmatrix}
2&1&3\\0&0&0\\1&1&2
\end{bmatrix}+\epsilon_1 \begin{bmatrix}
2&2&4\\3&-1&2\\-4&-2&-6
\end{bmatrix}$ and $\widehat{B}=\begin{bmatrix}
1&-1&0\\0&0&0\\-1&3&2
\end{bmatrix}+\epsilon_1\begin{bmatrix}
2&-4&3\\0&0&0\\1&-5&6
\end{bmatrix}$ be dual matrices. The DGGI of $\widehat{A}$ and $\widehat{B}$ are given by  $$\widehat{A}^{\#}=\begin{bmatrix}2&-5&-3\\0&0&0\\-1&3&2
\end{bmatrix}+\epsilon_1 \begin{bmatrix}
27&-78&-51\\13&-35&-22\\-21&60&39
\end{bmatrix}$$ and $$\widehat{B}^{\#}=\begin{bmatrix}
1&-1&0\\0&0&0\\-1/3&1/9&1/3
\end{bmatrix}+\epsilon_1 \begin{bmatrix}
1&-1.6667&1\\0&0&0\\-.6667&.4444&.3333
\end{bmatrix},$$ respectively. Also, we have 
$$\widehat{A}\widehat{B}=\begin{bmatrix}
5&-11&9\\0&0&0\\3&-7&6
\end{bmatrix}+\epsilon_1 \begin{bmatrix}
13& -37&36\\5&-9&6\\-6&8&-3
\end{bmatrix}.$$ Therefore, 
$$(\widehat{A}\widehat{B})^{\#}=\begin{bmatrix}
2&0&-3\\0&0&0\\-1&-1/9&5/3
\end{bmatrix}+\epsilon_1 \begin{bmatrix}
6&86/9&-70/3\\13&5/9&-61/3\\
127/9&-115/27&-133/9
\end{bmatrix},$$
$$\widehat{A}^{\#}\widehat{B}^{\#}=\begin{bmatrix}
2.9999&-2.3333&-0.9999\\
0&0&0\\
.0001&.6666&.9996
\end{bmatrix}+\epsilon_1 \begin{bmatrix}
47.9984&-37.3327&-15.9982\\20.3326&-15.4442&-7.3326\\-34.9988&24.9994&14.9986
\end{bmatrix},$$ and
$$\widehat{B}^{\#}\widehat{A}^{\#}=\begin{bmatrix}
2&-5&-3\\0&0&0\\-.3333&.6666&1.9998
\end{bmatrix}+\epsilon_1 \begin{bmatrix}
17&-51&-29\\0&0&0\\-15.5542&44.4485&30.5528
\end{bmatrix}.$$ It is clear that $(\widehat{A}\widehat{B})^{\#}\neq \widehat{A}^{\#}\widehat{B}^{\#}\neq\widehat{B}^{\#}\widehat{A}^{\#}.$

We now prove a result that provides sufficient conditions for reverse and forward-order laws. For this, let us take
$\tilde{X}=\widehat{A}+\epsilon_2 \widehat{A}_0=A_0+\epsilon_1 A_1+\epsilon_2 A_2+ \epsilon_1 \epsilon_2A_3$ and $\tilde{Y}=\widehat{C}+\epsilon_2 \widehat{C}_0=C_0+\epsilon_1 C_1+\epsilon_2 C_2+ \epsilon_1 \epsilon_2C_3$ be two hyper-dual matrices.

\begin{theorem}
Let $\tilde{X}=\widehat{A}+\epsilon_2 \widehat{A}_0,\tilde{Y}=\widehat{C}+\epsilon_2 \widehat{C}_0$ be hyper-dual matrices such that the dual index of  $\widehat{A}$ and $\widehat{C}$ is 1. Suppose that HDGGI of $\tilde{X}$, $\tilde{Y}$, $\tilde{X}\tilde{Y}$ exist with $\widehat{A}\widehat{A}^{\#}\widehat{A_0}=\widehat{A_0}\widehat{A}\widehat{A}^{\#}=\widehat{A_0}$ and $\widehat{C}\widehat{C}^{\#}\widehat{C_0}=\widehat{C_0}\widehat{C}\widehat{C}^{\#}=\widehat{C_0}$. If $\widehat{A}\widehat{C}=\widehat{C}\widehat{A}$, $\widehat{C}^{\#}\widehat{A}_0=\widehat{A}_0\widehat{C}^{\#}$ and  $\widehat{A}^{\#}\widehat{C}_0=\widehat{C}_0\widehat{A}^{\#}$, then $(\tilde{X}\tilde{Y})^{\#}=\tilde{X}^{\#}\tilde{Y}^{\#}=\tilde{Y}^{\#}\tilde{X}^{\#}$.
\end{theorem}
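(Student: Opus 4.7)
The plan is to reduce the theorem to three applications of Corollary \ref{thm3.4} and then verify that the resulting closed-form expressions coincide. Since the absorption hypotheses $\widehat{A}\widehat{A}^{\#}\widehat{A}_0 = \widehat{A}_0 = \widehat{A}_0\widehat{A}\widehat{A}^{\#}$ and the analogue for $\widehat{C}$ are exactly the inputs of Corollary \ref{thm3.4}, I immediately obtain
$\tilde{X}^{\#} = \widehat{A}^{\#} - \epsilon^*\widehat{A}^{\#}\widehat{A}_0\widehat{A}^{\#}$ and $\tilde{Y}^{\#} = \widehat{C}^{\#} - \epsilon^*\widehat{C}^{\#}\widehat{C}_0\widehat{C}^{\#}$.

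Next, I would show the hyper-dual product $\tilde{X}\tilde{Y} = \widehat{A}\widehat{C} + \epsilon^*(\widehat{A}\widehat{C}_0 + \widehat{A}_0\widehat{C})$ itself satisfies the hypotheses of Corollary \ref{thm3.4}, so that its HDGGI also admits the compact form. The key preliminary is that, since $\widehat{A}\widehat{C}=\widehat{C}\widehat{A}$ and both dual matrices have dual index $1$, the standard commuting-group-inverse argument (carried out in the dual setting via the uniqueness clause of Theorem \ref{th2.1}) yields $\widehat{A}^{\#}\widehat{C} = \widehat{C}\widehat{A}^{\#}$, $\widehat{A}\widehat{C}^{\#} = \widehat{C}^{\#}\widehat{A}$, $\widehat{A}^{\#}\widehat{C}^{\#} = \widehat{C}^{\#}\widehat{A}^{\#}$, and $(\widehat{A}\widehat{C})^{\#} = \widehat{A}^{\#}\widehat{C}^{\#}$. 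Combining these with the given commutation relations $\widehat{A}^{\#}\widehat{C}_0 = \widehat{C}_0\widehat{A}^{\#}$ and $\widehat{C}^{\#}\widehat{A}_0 = \widehat{A}_0\widehat{C}^{\#}$ together with the absorption hypotheses, a short push-through computation gives
$(\widehat{A}\widehat{C})(\widehat{A}\widehat{C})^{\#}(\widehat{A}\widehat{C}_0 + \widehat{A}_0\widehat{C}) = \widehat{A}\widehat{C}_0 + \widehat{A}_0\widehat{C}$
and its mirror on the right. Corollary \ref{thm3.4} then yields
$(\tilde{X}\tilde{Y})^{\#} = (\widehat{A}\widehat{C})^{\#} - \epsilon^*(\widehat{A}\widehat{C})^{\#}(\widehat{A}\widehat{C}_0 + \widehat{A}_0\widehat{C})(\widehat{A}\widehat{C})^{\#}$.

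Finally, using $(\epsilon^*)^2 = 0$, I would expand
$\tilde{X}^{\#}\tilde{Y}^{\#} = \widehat{A}^{\#}\widehat{C}^{\#} - \epsilon^*\bigl(\widehat{A}^{\#}\widehat{C}^{\#}\widehat{C}_0\widehat{C}^{\#} + \widehat{A}^{\#}\widehat{A}_0\widehat{A}^{\#}\widehat{C}^{\#}\bigr)$
and the analogous expression for $\tilde{Y}^{\#}\tilde{X}^{\#}$. Matching these with the formula for $(\tilde{X}\tilde{Y})^{\#}$ above, and substituting $(\widehat{A}\widehat{C})^{\#} = \widehat{A}^{\#}\widehat{C}^{\#}$, reduces the whole theorem to the identity
$\widehat{A}^{\#}\widehat{C}^{\#}\widehat{C}_0\widehat{C}^{\#} + \widehat{A}^{\#}\widehat{A}_0\widehat{A}^{\#}\widehat{C}^{\#} = \widehat{A}^{\#}\widehat{C}^{\#}(\widehat{A}\widehat{C}_0 + \widehat{A}_0\widehat{C})\widehat{A}^{\#}\widehat{C}^{\#}$,
with the symmetric identity for $\tilde{Y}^{\#}\tilde{X}^{\#}$. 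The main obstacle will be the bookkeeping in this step: on the right-hand side I must push $\widehat{A}^{\#}$ past $\widehat{C}_0$ and $\widehat{C}^{\#}$ past $\widehat{A}_0$ via the two given commutation rules, then collapse the chains $\widehat{A}\widehat{A}^{\#}\widehat{A}_0 = \widehat{A}_0$, $\widehat{C}_0\widehat{C}\widehat{C}^{\#} = \widehat{C}_0$, and absorb the remaining $\widehat{A}\widehat{A}^{\#}$, $\widehat{C}\widehat{C}^{\#}$ factors against the outer $\widehat{A}^{\#}\widehat{C}^{\#}$. The same reduction applies with $\widehat{A}$ and $\widehat{C}$ swapped to handle $\tilde{Y}^{\#}\tilde{X}^{\#}$, yielding the three-way equality $(\tilde{X}\tilde{Y})^{\#} = \tilde{X}^{\#}\tilde{Y}^{\#} = \tilde{Y}^{\#}\tilde{X}^{\#}$.
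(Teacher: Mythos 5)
Your proposal follows essentially the same route as the paper's proof: apply Corollary \ref{thm3.4} to $\tilde{X}$, $\tilde{Y}$, and $\tilde{X}\tilde{Y}$ to get the compact forms, use $\widehat{A}\widehat{C}=\widehat{C}\widehat{A}$ to obtain $(\widehat{A}\widehat{C})^{\#}=\widehat{A}^{\#}\widehat{C}^{\#}=\widehat{C}^{\#}\widehat{A}^{\#}$, and then push the commutation and absorption identities through the dual part. Your added step of explicitly verifying that $\tilde{X}\tilde{Y}$ satisfies the absorption hypotheses of the corollary is a point the paper glosses over, but otherwise the argument is the same.
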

\begin{proof} By Corollary \ref{thm3.4},  the HDGGI of $\tilde{X}$ and $\tilde{Y}$ are
$\tilde{X}^{\#}=\widehat{A}^{\#}-\epsilon_2 \widehat{A}^{\#}\widehat{A}_0\widehat{A}^{\#}$ and $\tilde{Y}^{\#}=\widehat{C}^{\#}-\epsilon_2 \widehat{C}^{\#}\widehat{C}_0\widehat{C}^{\#}$, respectively. So,  $\tilde{X}^{\#}\tilde{Y}^{\#}=(\widehat{A}^{\#}-\epsilon_2 \widehat{A}^{\#}\widehat{A}_0\widehat{A}^{\#})(\widehat{C}^{\#}-\epsilon_2 \widehat{C}^{\#}\widehat{C}_0\widehat{C}^{\#})=\widehat{A}^{\#}\widehat{C}^{\#}-\epsilon_2 (\widehat{A}^{\#}\widehat{C}^{\#}\widehat{C}_0\widehat{C}^{\#}+\widehat{A}^{\#}\widehat{A}_0\widehat{A}^{\#}\widehat{C}^{\#}).$ Also, $\tilde{X}\tilde{Y}=(\widehat{A}+\epsilon_2 \widehat{A}_0)(\widehat{C}+\epsilon_2 \widehat{C}_0)=\widehat{A}\widehat{C}+\epsilon_2 (\widehat{A}\widehat{C}_0+\widehat{A}_0\widehat{C}).$  Again, by Corollary \ref{thm3.4}, $(\tilde{X}\tilde{Y})^{\#}=(\widehat{A}\widehat{C})^{\#}-\epsilon_2 ((\widehat{A}\widehat{C})^{\#}(\widehat{A}\widehat{C}_0+\widehat{A}_0\widehat{C})(\widehat{A}\widehat{C})^{\#})$. The equality $\widehat{A}\widehat{C}=\widehat{C}\widehat{A}$ implies that  $(\widehat{A}\widehat{C})^{\#}=\widehat{A}^{\#}\widehat{C}^{\#}=\widehat{C}^{\#}\widehat{A}^{\#}$. Further, 
\begin{align*}
    (\tilde{X}\tilde{Y})^{\#}&=(\widehat{A}\widehat{C})^{\#}-\epsilon_2 ((\widehat{A}\widehat{C})^{\#}(\widehat{A}\widehat{C}_0+\widehat{A}_0\widehat{C})(\widehat{A}\widehat{C})^{\#})\\
    &=\widehat{A}^{\#}\widehat{C}^{\#}-\epsilon_2 (\widehat{A}^{\#}\widehat{C}^{\#}(\widehat{A}\widehat{C}_0+\widehat{A}_0\widehat{C})\widehat{A}^{\#}\widehat{C}^{\#})\\
    &=\widehat{A}^{\#}\widehat{C}^{\#}-\epsilon_2 (\widehat{A}^{\#}\widehat{C}^{\#}\widehat{A}\widehat{C}_0\widehat{A}^{\#}\widehat{C}^{\#}+\widehat{A}^{\#}\widehat{C}^{\#}\widehat{A}_0\widehat{C}\widehat{A}^{\#}\widehat{C}^{\#})\\
    &=\widehat{A}^{\#}\widehat{C}^{\#}-\epsilon_2 (\widehat{C}^{\#}\widehat{A}^{\#}\widehat{A}\widehat{A}^{\#}\widehat{C}_0\widehat{C}^{\#}+\widehat{A}^{\#}\widehat{A}_0\widehat{C}^{\#}\widehat{C}\widehat{C}^{\#}\widehat{A}^{\#})\\
    &=\widehat{A}^{\#}\widehat{C}^{\#}-\epsilon_2 (\widehat{C}^{\#}\widehat{A}^{\#}\widehat{C}_0\widehat{C}^{\#}+\widehat{A}^{\#}\widehat{A}_0\widehat{C}^{\#}\widehat{A}^{\#})\\
    &=\widehat{A}^{\#}\widehat{C}^{\#}-\epsilon_2 (\widehat{A}^{\#}\widehat{C}^{\#}\widehat{C}_0\widehat{C}^{\#}+\widehat{A}^{\#}\widehat{A}_0\widehat{C}^{\#}\widehat{A}^{\#})\\
    &=\tilde{X}^{\#}\tilde{Y}^{\#}.
\end{align*}

 Following similar steps, we can show that $(\tilde{X}\tilde{Y})^{\#}=\tilde{Y}^{\#}\tilde{X}^{\#}$. Hence, $(\tilde{X}\tilde{Y})^{\#}=\tilde{X}^{\#}\tilde{Y}^{\#}=\tilde{Y}^{\#}\tilde{X}^{\#}$.
 \end{proof}

As a special case of the above result, we next obtain sufficient conditions for reverse and forward-order
laws for the dual group inverse of dual matrices. 
\begin{corollary}
Let $\widehat{X}=A+\epsilon_1 A_0,\widehat{Y}=C+\epsilon_1 C_0$ be dual matrices such that $Ind(A)=Ind(C)=1$. Suppose that DGGI of $\widehat{X}$, $\widehat{Y}$ and $\widehat{X}\widehat{Y}$ exist with ${A}{A}^{\#}{A_0}={A_0}{A}{A}^{\#}={A_0}$ and ${C}{C}^{\#}{C_0}={C_0}{C}{C}^{\#}={C_0}$. If $AC=CA$, $C^{\#}A_0=A_0C^{\#}$ and  $A^{\#}C_0=C_0A^{\#}$, then $(\widehat{X}\widehat{Y})^{\#}=\widehat{X}^{\#}\widehat{Y}^{\#}=\widehat{Y}^{\#}\widehat{X}^{\#}$.
\end{corollary}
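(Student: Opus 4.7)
The plan is to mirror the strategy of the preceding theorem, replacing the hyper-dual decomposition by the dual decomposition and invoking Theorem \ref{th2.1} in place of Corollary \ref{thm3.4}. The hypotheses $AA^{\#}A_{0}=A_{0}AA^{\#}=A_{0}$ and $CC^{\#}C_{0}=C_{0}CC^{\#}=C_{0}$ immediately give $(I-AA^{\#})A_{0}=A_{0}(I-AA^{\#})=0$ and the analogous identities for $C_{0}$. Substituting these into the general DGGI formula of Theorem \ref{th2.1} collapses the expressions to the compact forms
$$\widehat{X}^{\#}=A^{\#}-\epsilon A^{\#}A_{0}A^{\#}\qquad\text{and}\qquad \widehat{Y}^{\#}=C^{\#}-\epsilon C^{\#}C_{0}C^{\#}.$$

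Next I would verify that the product $\widehat{X}\widehat{Y}=AC+\epsilon(AC_{0}+A_{0}C)$ satisfies the analogous hypothesis, so that the same simplification applies to $(\widehat{X}\widehat{Y})^{\#}$. Since $AC=CA$ and both matrices have index $1$, a standard group-inverse argument yields $A^{\#}C=CA^{\#}$, $AC^{\#}=C^{\#}A$, $(AC)^{\#}=A^{\#}C^{\#}=C^{\#}A^{\#}$, and that $AA^{\#}$ and $CC^{\#}$ commute with each of $A$, $C$, $A^{\#}$, $C^{\#}$. Combining these with $AA^{\#}A_{0}=A_{0}$ and $CC^{\#}C_{0}=C_{0}$ (and the mirror identities on the right) gives $(AC)(AC)^{\#}(AC_{0}+A_{0}C)=AC_{0}+A_{0}C=(AC_{0}+A_{0}C)(AC)(AC)^{\#}$, so Theorem \ref{th2.1} again produces
$$(\widehat{X}\widehat{Y})^{\#}=A^{\#}C^{\#}-\epsilon\,A^{\#}C^{\#}(AC_{0}+A_{0}C)A^{\#}C^{\#}.$$

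The final step is to match the two expansions. On one hand, multiplying out $\widehat{X}^{\#}\widehat{Y}^{\#}$ produces $A^{\#}C^{\#}-\epsilon\bigl(A^{\#}C^{\#}C_{0}C^{\#}+A^{\#}A_{0}A^{\#}C^{\#}\bigr)$. On the other hand, expanding $A^{\#}C^{\#}(AC_{0}+A_{0}C)A^{\#}C^{\#}$ and repeatedly applying the assumed commutations $C^{\#}A_{0}=A_{0}C^{\#}$ and $A^{\#}C_{0}=C_{0}A^{\#}$, together with the derived commutations above and the idempotencies $AA^{\#}A=A$, $CC^{\#}C=C$, reduces the two summands to exactly $A^{\#}C^{\#}C_{0}C^{\#}$ and $A^{\#}A_{0}A^{\#}C^{\#}$. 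The identity $(\widehat{X}\widehat{Y})^{\#}=\widehat{Y}^{\#}\widehat{X}^{\#}$ then follows by the symmetric computation.

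The main obstacle is purely bookkeeping: correctly pushing the various $A$, $C$, $A^{\#}$, $C^{\#}$ factors past one another using the given commutativity hypotheses to force the two expansions into identical form. No new algebraic idea is needed beyond what is already used in the proof of the preceding theorem, which is why the corollary proceeds by strict analogy.
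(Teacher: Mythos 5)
Your proof is correct, but it takes a different route from the paper: the paper offers no separate argument for this corollary at all, obtaining it as the special case of the preceding hyper-dual theorem in which the dual matrices $\widehat{A},\widehat{A}_0,\widehat{C},\widehat{C}_0$ are taken to be the real matrices $A,A_0,C,C_0$ (so that $\tilde{X}=A+\epsilon^*A_0$ is formally a dual matrix and the theorem's conclusion reads off as the corollary). You instead replay the theorem's argument one level down, deriving the compact forms $\widehat{X}^{\#}=A^{\#}-\epsilon A^{\#}A_0A^{\#}$, $\widehat{Y}^{\#}=C^{\#}-\epsilon C^{\#}C_0C^{\#}$ directly from the formula in Theorem \ref{th2.1} and then matching expansions. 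Your version is more self-contained and, to its credit, makes explicit a step the paper glosses over even in the parent theorem: to apply the compact formula to the product one must check $(AC)(AC)^{\#}(AC_0+A_0C)=(AC_0+A_0C)(AC)(AC)^{\#}=AC_0+A_0C$, which the paper never verifies. Do note, however, that your own verification of this identity is slightly under-argued: the term $AA^{\#}CC^{\#}A_0C$ reduces to $A_0C$ only after one observes that $C^{\#}A_0=A_0C^{\#}$ together with $Ind(C)=1$ forces $CA_0=A_0C$ (because $C=(C^{\#})^{\#}$ is a polynomial in $C^{\#}$, so the commutant of $C^{\#}$ is contained in that of $C$); the commutation hypotheses on $A_0,C_0$ are stated only against $A^{\#},C^{\#}$, not against $A,C$, so this double-commutant step should be stated explicitly rather than folded into ``derived commutations.'' With that observation added, both the specialization route and your direct route are complete; the specialization is shorter, while your direct proof exposes exactly which commutations are used where.
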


Next result provides criteria for the reverse and forward-order laws for the HDMPGI.
\begin{theorem}\label{thm5.3}
Let $\tilde{X}=\widehat{A}+\epsilon_2 \widehat{A}_0,\tilde{Y}=\widehat{C}+\epsilon_2 \widehat{C}_0$ be hyper-dual matrices such that $\widehat{A}^{\dagger}$ and $\widehat{C}^{\dagger}$ exist. Suppose that HDMPGI of $\tilde{X}$, $\tilde{Y}$, $\tilde{X}\tilde{Y}$ exist with $\widehat{A}\widehat{A}^{\dagger}\widehat{A_0}=\widehat{A_0}\widehat{A}\widehat{A}^{\dagger}=\widehat{A_0}$ and $\widehat{C}\widehat{C}^{\dagger}\widehat{C_0}=\widehat{C_0}\widehat{C}\widehat{C}^{\dagger}=\widehat{C_0}$. If $\widehat{A}\widehat{C}=\widehat{C}\widehat{A}$, $\widehat{A}^T\widehat{C}=\widehat{C}\widehat{A}^T$, $\widehat{C}^{\dagger}\widehat{A}_0=\widehat{A}_0\widehat{C}^{\dagger}$ and  $\widehat{A}^{\dagger}\widehat{C}_0=\widehat{C}_0\widehat{A}^{\dagger}$, then $(\tilde{X}\tilde{Y})^{\dagger}=\tilde{X}^{\dagger}\tilde{Y}^{\dagger}=\tilde{Y}^{\dagger}\tilde{X}^{\dagger}$.
\end{theorem}
\begin{proof} By Corollary \ref{cor2.3},  the DMPGI of $\tilde{X}$ and $\tilde{Y}$ are 
$\tilde{X}^{\dagger}=\widehat{A}^{\dagger}-\epsilon_2 \widehat{A}^{\dagger}\widehat{A}_0\widehat{A}^{\dagger}$ and $\tilde{Y}^{\dagger}=\widehat{C}^{\dagger}-\epsilon_2 \widehat{C}^{\dagger}\widehat{C}_0\widehat{C}^{\dagger}$, respectively. So,  $\tilde{X}^{\dagger}\tilde{Y}^{\dagger}=(\widehat{A}^{\dagger}-\epsilon_2 \widehat{A}^{\dagger}\widehat{A}_0\widehat{A}^{\dagger})(\widehat{C}^{\dagger}-\epsilon_2 \widehat{C}^{\dagger}\widehat{C}_0\widehat{C}^{\dagger})=\widehat{A}^{\dagger}\widehat{C}^{\dagger}-\epsilon_2 (\widehat{A}^{\dagger}\widehat{C}^{\dagger}\widehat{C}_0\widehat{C}^{\dagger}+\widehat{A}^{\dagger}\widehat{A}_0\widehat{A}^{\dagger}\widehat{C}^{\dagger}).$ Also, $\tilde{X}\tilde{Y}=(\widehat{A}+\epsilon_2 \widehat{A}_0)(\widehat{C}+\epsilon_2 \widehat{C}_0)=\widehat{A}\widehat{C}+\epsilon_2 (\widehat{A}\widehat{D}+\widehat{A}_0\widehat{C}).$ Again, by Corollary \ref{cor2.3}, $(\tilde{X}\tilde{Y})^{\dagger}=(\widehat{A}\widehat{C})^{\dagger}-\epsilon_2 ((\widehat{A}\widehat{C})^{\dagger}(\widehat{A}\widehat{C}_0+\widehat{A}_0\widehat{C})(\widehat{A}\widehat{C})^{\dagger})$. The hypothesis $\widehat{A}\widehat{C}=\widehat{C}\widehat{A}$ and $\widehat{A}^T\widehat{C}=\widehat{C}\widehat{A}^T$ imply that  $(\widehat{A}\widehat{C})^{\dagger}=\widehat{A}^{\dagger}\widehat{C}^{\dagger}=\widehat{C}^{\dagger}\widehat{A}^{\dagger}$. In addition,
\begin{align*}
    (\tilde{X}\tilde{Y})^{\dagger} &=(\widehat{A}\widehat{C})^{\dagger}-\epsilon_2 ((\widehat{A}\widehat{C})^{\dagger}(\widehat{A}\widehat{C}_0+\widehat{A}_0\widehat{C})(\widehat{A}\widehat{C})^{\dagger})\\
    &=\widehat{A}^{\dagger}\widehat{C}^{\dagger}-\epsilon_2 (\widehat{A}^{\dagger}\widehat{C}^{\dagger}(\widehat{A}\widehat{C}_0+\widehat{A}_0\widehat{C})\widehat{A}^{\dagger}\widehat{C}^{\dagger})\\
    &=\widehat{A}^{\dagger}\widehat{C}^{\dagger}-\epsilon_2 (\widehat{A}^{\dagger}\widehat{C}^{\dagger}\widehat{A}\widehat{C}_0\widehat{A}^{\dagger}\widehat{C}^{\dagger}+\widehat{A}^{\dagger}\widehat{C}^{\dagger}\widehat{A}_0\widehat{C}\widehat{A}^{\dagger}\widehat{C}^{\dagger})\\
    &=\widehat{A}^{\dagger}\widehat{C}^{\dagger}-\epsilon_2 (\widehat{C}^{\dagger}\widehat{A}^{\dagger}\widehat{A}\widehat{A}^{\dagger}\widehat{C}_0\widehat{C}^{\dagger}+\widehat{A}^{\dagger}\widehat{A}_0\widehat{C}^{\dagger}\widehat{C}\widehat{C}^{\dagger}\widehat{A}^{\dagger})\\
    &=\widehat{A}^{\dagger}\widehat{C}^{\dagger}-\epsilon_2 (\widehat{C}^{\dagger}\widehat{A}^{\dagger}\widehat{C}_0\widehat{C}^{\dagger}+\widehat{A}^{\dagger}\widehat{A}_0\widehat{C}^{\dagger}\widehat{A}^{\dagger})\end{align*}
    \begin{align*}
    &=\widehat{A}^{\dagger}\widehat{C}^{\dagger}-\epsilon_2 (\widehat{A}^{\dagger}\widehat{C}^{\dagger}\widehat{C}_0\widehat{C}^{\dagger}+\widehat{A}^{\dagger}\widehat{A}_0\widehat{C}^{\dagger}\widehat{A}^{\dagger})\\
    &=\tilde{X}^{\dagger}\tilde{Y}^{\dagger}.
\end{align*}

 Similarly, we can show that $(\tilde{X}\tilde{Y})^{\dagger}=\tilde{Y}^{\dagger}\tilde{X}^{\dagger}$. Hence, $(\tilde{X}\tilde{Y})^{\dagger}=\tilde{X}^{\dagger}\tilde{Y}^{\dagger}=\tilde{Y}^{\dagger}\tilde{X}^{\dagger}$.
 \end{proof}

Sufficient conditions for reverse
and forward-order laws for the dual Moore-Penrose inverse of dual matrices are provided next.
\begin{corollary}
Let $\widehat{X}=A+\epsilon_1 A_0,\widehat{Y}=C+\epsilon_1 C_0\in \widehat{\mathbb{R}}^{n\times n}$. Suppose that DMPGI of $\widehat{X}$, $\widehat{Y}$, $\widehat{X}\widehat{Y}$ exist with ${A}{A}^{\dagger}{A_0}={A_0}{A}{A}^{\dagger}={A_0}$ and ${C}{C}^{\dagger}{C_0}={C_0}{C}{C}^{\dagger}={C_0}$. If $AC=CA$, $A^TC=CA^T$, $C^{\dagger}B=BC^{\dagger}$ and  $A^{\dagger}D=DA^{\dagger}$, then $(\widehat{X}\widehat{Y})^{\dagger}=\widehat{X}^{\dagger}\widehat{Y}^{\dagger}=\widehat{Y}^{\dagger}\widehat{X}^{\dagger}$.
\end{corollary}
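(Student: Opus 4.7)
The corollary is a direct dual-matrix analog of Theorem \ref{thm5.3}, and my plan is to carry out essentially the same chain of manipulations with $\epsilon^{*}$ replaced by $\epsilon$ and the hyper-dual objects $\widehat{A},\widehat{C},\widehat{A}_0,\widehat{C}_0$ replaced by the real matrices $A,C,A_0,C_0$. Under the standing hypotheses $AA^{\dagger}A_0=A_0AA^{\dagger}=A_0$ and $CC^{\dagger}C_0=C_0CC^{\dagger}=C_0$, the dual-matrix counterpart of Corollary \ref{cor2.3} (with the dual unit $\epsilon$ in place of $\epsilon^{*}$) yields the compact expressions
$$\widehat{X}^{\dagger}=A^{\dagger}-\epsilon A^{\dagger}A_0 A^{\dagger},\qquad \widehat{Y}^{\dagger}=C^{\dagger}-\epsilon C^{\dagger}C_0 C^{\dagger},$$
from which
$$\widehat{X}^{\dagger}\widehat{Y}^{\dagger}=A^{\dagger}C^{\dagger}-\epsilon\bigl(A^{\dagger}C^{\dagger}C_0 C^{\dagger}+A^{\dagger}A_0 A^{\dagger}C^{\dagger}\bigr).$$

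Next I would compute the product $\widehat{X}\widehat{Y}=AC+\epsilon(AC_0+A_0 C)$. The classical reverse/forward-order law for the Moore--Penrose inverse, applied under $AC=CA$ and $A^{T}C=CA^{T}$, delivers $(AC)^{\dagger}=A^{\dagger}C^{\dagger}=C^{\dagger}A^{\dagger}$. A further application of the same simplified DMPGI formula to $\widehat{X}\widehat{Y}$ then yields
$$(\widehat{X}\widehat{Y})^{\dagger}=(AC)^{\dagger}-\epsilon\,(AC)^{\dagger}(AC_0+A_0 C)(AC)^{\dagger}.$$
The dual part is collapsed by inserting the two representations of $(AC)^{\dagger}$ on opposite sides of the middle factor and then migrating $A_0$ and $C_0$ past the pseudoinverses via the commutation relations $C^{\dagger}A_0=A_0 C^{\dagger}$ and $A^{\dagger}C_0=C_0 A^{\dagger}$, using the idempotency identities $A^{\dagger}AA^{\dagger}=A^{\dagger}$ and $C^{\dagger}CC^{\dagger}=C^{\dagger}$ to eliminate the absorbed factors. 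The resulting expression matches $\widehat{X}^{\dagger}\widehat{Y}^{\dagger}$ term-by-term, and a symmetric rearrangement of factors gives the identification with $\widehat{Y}^{\dagger}\widehat{X}^{\dagger}$.

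The computation is routine, and the only prerequisite that deserves a word of caution is the verification that the simplified DMPGI formula is indeed applicable to the product $\widehat{X}\widehat{Y}$, i.e., that $(AC)(AC)^{\dagger}(AC_0+A_0 C)=(AC_0+A_0 C)(AC)(AC)^{\dagger}=AC_0+A_0 C$. This is the main (minor) obstacle; it follows by writing $AC(AC)^{\dagger}=AA^{\dagger}CC^{\dagger}=CC^{\dagger}AA^{\dagger}$ and applying the given absorption identities on $A_0$ and $C_0$ individually, together with $AC=CA$ and $A^{\dagger}C^{\dagger}=C^{\dagger}A^{\dagger}$. Since all of this merely transcribes the proof of Theorem \ref{thm5.3} into the dual-matrix setting, the corollary follows.
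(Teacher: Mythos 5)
Your proposal is correct and follows essentially the same route as the paper: the paper states this corollary without a separate proof, as a direct specialization of Theorem \ref{thm5.3} (hyper-dual replaced by dual, $\epsilon^*$ by $\epsilon$), and your argument is exactly that theorem's proof transcribed to the dual setting --- compact DMPGI formulas for $\widehat{X}$, $\widehat{Y}$, and $\widehat{X}\widehat{Y}$, the classical law $(AC)^{\dagger}=A^{\dagger}C^{\dagger}=C^{\dagger}A^{\dagger}$ from $AC=CA$ and $A^{T}C=CA^{T}$, and the commutation relations to collapse the dual part. Your added check that the absorption condition holds for the product $\widehat{X}\widehat{Y}$ is a point the paper itself silently skips, so flagging it is a welcome extra rather than a deviation.
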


From Theorem 2.7 of \cite{rg5}, existence of $\widehat{A}^{\dagger}$ can be  given in terms of dual index of $\widehat{A}$. Therefore, Theorem \ref{thm5.3} can also be reformulated in terms of the dual index of matrices $\widehat{X}$ and $\widehat{Y}$, and is obtained next.

\begin{theorem}
    Let $\tilde{X}=\widehat{A}+\epsilon_2 \widehat{A}_0,\tilde{Y}=\widehat{C}+\epsilon_2 \widehat{C}_0$ be hyper-dual matrices such that the dual index of $\widehat{X}$ and $\widehat{Y}$ is 1. Suppose that HDMPGI of $\tilde{X}$, $\tilde{Y}$, $\tilde{X}\tilde{Y}$ exist with $\widehat{A}\widehat{A}^{\dagger}\widehat{A_0}=\widehat{A_0}\widehat{A}\widehat{A}^{\dagger}=\widehat{A_0}$ and $\widehat{C}\widehat{C}^{\dagger}\widehat{C_0}=\widehat{C_0}\widehat{C}\widehat{C}^{\dagger}=\widehat{C_0}$. If $\widehat{A}\widehat{C}=\widehat{C}\widehat{A}$, $\widehat{A}^T\widehat{C}=\widehat{C}\widehat{A}^T$, $\widehat{C}^{\dagger}\widehat{A}_0=\widehat{A}_0\widehat{C}^{\dagger}$ and  $\widehat{A}^{\dagger}\widehat{C}_0=\widehat{C}_0\widehat{A}^{\dagger}$, then $(\tilde{X}\tilde{Y})^{\dagger}=\tilde{X}^{\dagger}\tilde{Y}^{\dagger}=\tilde{Y}^{\dagger}\tilde{X}^{\dagger}$.
\end{theorem}

\section{Group inverses of dual matrices of $n$-order}\label{rm5}
 
  For notational simplicity, let   
  a dual matrix be represented as $\widehat{A}^{(1)}=B^{(0)}+\epsilon_1 C^{(0)}$ where  $B^{(0)}, C^{(0)}\in \mathbb{R}^{n\times n}$, $\widehat{A}^{(1)}\in \widehat{\mathbb{R}}^{n\times n}$, then the matrix $\widehat{A}^{(1)}$ 
  is referred to as $1$-order dual matrix that constitute of $0$-order matrices $B^{(0)}$ and $C^{(0)}$. Similarly, the hyper-dual matrix $\widehat{A}^{(2)}=\widehat{A}^{(1)}+\epsilon_{2} \widehat{B}^{(1)}$ can be thought of as the $2$-order dual matrix that constitute of $1$-order matrices $\widehat{A}^{(1)}$ and $\widehat{B}^{(1)}$. More generally, a dual matrix of $n$-order \cite{hyper} is given by
     $$\widehat{A}^{(n)}=\widehat{B}^{(n-1)}+\epsilon_{n} \widehat{C}^{(n-1)},$$
   where $\widehat{B}^{(n-1)}$ and $\widehat{C}^{(n-1)}$ are dual matrices of $(n-1)$-order, and $\epsilon_n$ is a dual unit. In this section, we study an equivalent condition for the existence of the group inverse of dual matrices of $n$-order.



 \begin{theorem}
     Let $\widehat{A}^{(n)}=\widehat{B}^{(n-1)}+\epsilon_{n} \widehat{C}^{(n-1)}$ be an $n$-order dual matrix such that $(\widehat{B}^{(n-1)})^{\#}$ exists. Then, $\widehat{A}^{(n)}$ has a group inverse if and only if  
     $$[(I-\widehat{B}^{(n-1)}(\widehat{B}^{(n-1)})^{\#}]\widehat{C}^{(n-1)}[(I-\widehat{B}^{(n-1)}(\widehat{B}^{(n-1)})^{\#}]=0.$$
     Moreover, if the group generalized inverse of $\widehat{A}^{(n)}$ exists, then 
         $$(\widehat{A}^{(n)})^\#=(\widehat{B}^{(n-1)})^\#+\epsilon_{n}\widehat{Z}^{(n-1)},$$
         where
     \begin{align*}
         \widehat{Z}^{(n-1)}&=-(\widehat{B}^{\#})\widehat{C}^{(n-1)}(\widehat{B}^{(n-1)})^\#+((\widehat{B}^{(n-1)})^{\#})^{2}\widehat{C}^{(n-1)}(I-\widehat{B}^{(n-1)}(\widehat{B}^{(n-1)})^{\#})\\
         &~~+(I-\widehat{B}^{(n-1)}(\widehat{B}^{(n-1)})^{\#})\widehat{C}^{(n-1)}(\widehat{B}^{(n-1)})^{\#})^{2}).
     \end{align*}
 \end{theorem}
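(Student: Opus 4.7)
The plan is to adapt the proof of Theorem \ref{th2.1} to the $n$-order dual setting. The key observation is that the defining group-inverse equations are ring identities, so the splitting into primal part and $\epsilon_n$-coefficient part works verbatim when the primal part is itself an $(n-1)$-order dual matrix rather than a real matrix.

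First I would set up a candidate $(\widehat{A}^{(n)})^{\#} = \widehat{X}^{(n-1)} + \epsilon_n \widehat{Z}^{(n-1)}$ and substitute it into the three defining equations $\widehat{A}^{(n)}\widehat{X}^{(n)}\widehat{A}^{(n)} = \widehat{A}^{(n)}$, $\widehat{X}^{(n)}\widehat{A}^{(n)}\widehat{X}^{(n)} = \widehat{X}^{(n)}$, and $\widehat{A}^{(n)}\widehat{X}^{(n)} = \widehat{X}^{(n)}\widehat{A}^{(n)}$. Using $\epsilon_n^2 = 0$ and comparing the $(n-1)$-order (primal) parts, one recovers the three group-inverse identities for $\widehat{B}^{(n-1)}$, which together with the standing hypothesis force $\widehat{X}^{(n-1)} = (\widehat{B}^{(n-1)})^{\#}$. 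Abbreviating $\widehat{B} = \widehat{B}^{(n-1)}$ and $\widehat{C} = \widehat{C}^{(n-1)}$, the coefficients of $\epsilon_n$ then give three equations in $\widehat{Z}^{(n-1)}$ that are structurally identical to those in Lemma \ref{lem2.2}.

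For the forward direction, from the equation $\widehat{B}\widehat{B}^{\#}\widehat{C} + \widehat{B}\widehat{Z}^{(n-1)}\widehat{B} + \widehat{C}\widehat{B}^{\#}\widehat{B} = \widehat{C}$, I multiply on the left and on the right by $(I-\widehat{B}\widehat{B}^{\#})$. Since $(I-\widehat{B}\widehat{B}^{\#})\widehat{B} = 0 = \widehat{B}(I-\widehat{B}\widehat{B}^{\#})$ (using $\widehat{B}\widehat{B}^{\#} = \widehat{B}^{\#}\widehat{B}$), every term except $(I-\widehat{B}\widehat{B}^{\#})\widehat{C}(I-\widehat{B}\widehat{B}^{\#})$ vanishes, which then must equal zero.

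For the converse, I take the candidate formula proposed for $\widehat{Z}^{(n-1)}$ and verify the three $\epsilon_n$-part equations directly. The verification uses only $\widehat{B}\widehat{B}^{\#}\widehat{B} = \widehat{B}$, $\widehat{B}^{\#}\widehat{B}\widehat{B}^{\#} = \widehat{B}^{\#}$, $\widehat{B}\widehat{B}^{\#} = \widehat{B}^{\#}\widehat{B}$, and the just-established identity $(I-\widehat{B}\widehat{B}^{\#})\widehat{C}(I-\widehat{B}\widehat{B}^{\#}) = 0$; no property beyond the ring structure of $(n-1)$-order dual matrices is needed, which is exactly why the argument lifts from the first-order case. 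The main obstacle is the sheer bookkeeping in this expansion. I would manage it by writing $\widehat{Z}^{(n-1)}$ as a sum of its three summands, then grouping terms after pre- or post-multiplication by $\widehat{B}$ according to the complementary projectors $\widehat{B}\widehat{B}^{\#}$ and $I - \widehat{B}\widehat{B}^{\#}$, and repeatedly invoking their orthogonality with $\widehat{B}$. Uniqueness of $(\widehat{A}^{(n)})^{\#}$, whenever it exists, follows by the argument of Theorem \ref{thm3.2} transplanted to the $n$-order level.
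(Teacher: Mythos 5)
Your proposal is correct and follows essentially the same route as the paper: split the candidate inverse as $(\widehat{B}^{(n-1)})^{\#}+\epsilon_n\widehat{Z}^{(n-1)}$, identify the primal part via the group-inverse equations for $\widehat{B}^{(n-1)}$, extract the $\epsilon_n$-coefficient equations, obtain the necessary condition by annihilating all terms containing $\widehat{B}^{(n-1)}$ with the projector $I-\widehat{B}^{(n-1)}(\widehat{B}^{(n-1)})^{\#}$, and verify the explicit formula for $\widehat{Z}^{(n-1)}$ in the converse using only the ring structure. The paper reaches the necessary condition by computing $\widehat{B}\widehat{Z}\widehat{B}=-\widehat{B}\widehat{B}^{\#}\widehat{C}\widehat{B}\widehat{B}^{\#}$ rather than by your two-sided projector sandwich, but these are equivalent manipulations, not a different argument.
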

 \begin{proof}
    If $\widehat{A}^{(n)}$ has a group inverse $\widehat{X}^{(n)}$, let it be  of the form 
      $$\widehat{X}^{(n)}=\widehat{Y}^{(n-1)}+\epsilon_{n}\widehat{Z}^{(n-1)}.$$
     Clearly, $\widehat{A}^{(n)}$ and $\widehat{X}^{(n)}$ satisfy the following matrix equations
      \begin{align*}
           \widehat{A}^{(n)} \widehat{X}^{(n)} \widehat{A}^{(n)} &= \widehat{A}^{(n)}, \\
    \widehat{X}^{(n)} \widehat{A}^{(n)} \widehat{X}^{(n)} &= \widehat{X}^{(n)}, \\
    \widehat{A}^{(n)} \widehat{X}^{(n)} &= \widehat{X}^{(n)} \widehat{A}^{(n)}.
      \end{align*}
      Now, substituting $\widehat{A}^{(n)}=\widehat{B}^{(n-1)}+\epsilon_{n} \widehat{C}^{(n-1)}$ 
      and $\widehat{X}^{(n)}=\widehat{Y}^{(n-1)}+\epsilon_{n}\widehat{Z}^{(n-1)}$
      into the above three equations gives
      \begin{align*}
          \widehat{B}^{(n-1)} \widehat{Y}^{(n-1)} \widehat{B}^{(n-1)} &= \widehat{B}^{(n-1)}, \\
    \widehat{Y}^{(n-1)} \widehat{B}^{(n-1)} \widehat{Y}^{(n-1)} &= \widehat{Y}^{(n-1)}, \\
    \widehat{B}^{(n-1)} \widehat{Y}^{(n-1)} &= \widehat{Y}^{(n-1)} \widehat{B}^{(n-1)}.
      \end{align*}
      This implies that $\widehat{Y}^{(n-1)}=(\widehat{B}^{(n-1)})^\#$.
      Now, equating the dual parts of both sides of the equation $\widehat{A}^{(n)} \widehat{X}^{(n)} \widehat{A}^{(n)} = \widehat{A}^{(n)}$, we get 
      $$\widehat{C}^{(n-1)} = \widehat{C}^{(n-1)} (\widehat{B}^{(n-1)})^\# \widehat{B}^{(n-1)} + \widehat{B}^{(n-1)} (\widehat{B}^{(n-1)})^\# \widehat{C}^{(n-1)} + \widehat{B}^{(n-1)} \widehat{Z}^{(n-1)} \widehat{B}^{(n-1)},$$
      which is equivalent to 
      \begin{align*}
          \widehat{B}^{(n-1)} \widehat{Z}^{(n-1)} \widehat{B}^{(n-1)}&=\widehat{C}^{(n-1)} - \widehat{C}^{(n-1)} (\widehat{B}^{(n-1)})^\# \widehat{B}^{(n-1)} - \widehat{B}^{(n-1)} (\widehat{B}^{(n-1)})^\# \widehat{C}^{(n-1)}\\
          &=\widehat{M}^{(n-1)}.
      \end{align*}
      Now, 
      \begin{align*}
          \widehat{M}^{(n-1)}& = \widehat{B}^{(n-1)} \widehat{Z}^{(n-1)} \widehat{B}^{(n-1)}\\
          &=-\widehat{B}^{(n-1)} (\widehat{B}^{(n-1)})^\# \widehat{C}^{(n-1)} (\widehat{B}^{(n-1)}) (\widehat{B}^{(n-1)})^\#.
      \end{align*}
      Further, we have
      \begin{align}\label{eqn6.1}
          &-\widehat{B}^{(n-1)} (\widehat{B}^{(n-1)})^\# \widehat{C}^{(n-1)} (\widehat{B}^{(n-1)}) (\widehat{B}^{(n-1)})^\#\notag\\
          &=\widehat{C}^{(n-1)} - \widehat{C}^{(n-1)} (\widehat{B}^{(n-1)})^\# \widehat{B}^{(n-1)} - \widehat{B}^{(n-1)} (\widehat{B}^{(n-1)})^\# \widehat{C}^{(n-1)},
      \end{align}
      which is equivalent to $$[(I-\widehat{B}^{(n-1)}(\widehat{B}^{(n-1)})^{\#}]\widehat{C}^{(n-1)}[(I-\widehat{B}^{(n-1)}(\widehat{B}^{(n-1)})^{\#}]=0.$$
      Conversely, given that $(\widehat{B}^{(n-1)})^{\#}$ exists, and if $$[(I-\widehat{B}^{(n-1)}(\widehat{B}^{(n-1)})^{\#}]\widehat{C}^{(n-1)}[(I-\widehat{B}^{(n-1)}(\widehat{B}^{(n-1)})^{\#}]=0,$$ 
      then we are required to show that the group inverse of $\widehat{A}^{(n)}$ exists, and the matrix 
      $$\widehat{X}^{(n)}=(\widehat{B}^{(n-1)})^{\#}+\epsilon_{n}\widehat{Z}^{(n-1)}$$
      is a group inverse of $\widehat{A}^{(n)}$, such that
      \begin{align*}
          \widehat{Z}^{(n-1)}&=-(\widehat{B}^{(n-1)})^{\#}\widehat{C}^{(n-1)}(\widehat{B}^{(n-1)})^\#+((\widehat{B}^{(n-1)})^{\#})^{2}\widehat{C}^{(n-1)}(I-\widehat{B}^{(n-1)}(\widehat{B}^{(n-1)})^{\#})\\
         &~~+(I-\widehat{B}^{(n-1)}(\widehat{B}^{(n-1)})^{\#})\widehat{C}^{(n-1)}(\widehat{B}^{(n-1)})^{\#})^{2}).
      \end{align*} 
     On some computations with the three equations of group inverse, we obtain
      \begin{align*}
           \widehat{A}^{(n)} \widehat{X}^{(n)} \widehat{A}^{(n)}&=(\widehat{B}^{n-1}+\epsilon_{n} \widehat{C}^{n-1})((\widehat{B}^{n-1})^{\#}+\epsilon_{n}\widehat{Z}^{n-1})(\widehat{B}^{n-1}+\epsilon_{n} \widehat{C}^{n-1})\\
           &=\widehat{B}^{n-1}+\epsilon_{n}(\widehat{B}^{(n-1)} (\widehat{B}^{(n-1)})^\# \widehat{C}^{(n-1)}+\widehat{C}^{(n-1)}(\widehat{B}^{n-1}) (\widehat{B}^{(n-1)})^\#\\
           &-\widehat{B}^{n-1}(\widehat{B}^{\#})^{n-1}\widehat{C}^{n-1}(\widehat{B})^{n-1}(\widehat{B}^{\#})^{n-1}).
      \end{align*}
      From \eqref{eqn6.1},
      $[(I-\widehat{B}^{n-1}(\widehat{B}^{n-1})^{\#}]\widehat{C}^{n-1}[(I-\widehat{B}^{n-1}(\widehat{B}^{n-1})^{\#}]=0$ can be equivalently rewritten as 
      $$\widehat{C}^{n-1}=(\widehat{B}^{(n-1)})^\# \widehat{C}^{(n-1)}+\widehat{C}^{(n-1)}(\widehat{B}^{n-1}) (\widehat{B}^{(n-1)})^\#-\widehat{B}^{n-1}(\widehat{B}^{\#})^{n-1}\widehat{C}^{n-1}(\widehat{B})^{n-1}(\widehat{B}^{\#})^{n-1}$$
      which is same as $\widehat{A}^{(n)} \widehat{X}^{(n)} \widehat{A}^{(n)}=\widehat{A}^{(n)}$. Furthermore,
          \begin{align*}
              \widehat{X}^{(n)} \widehat{A}^{(n)} \widehat{X}^{(n)}&= (\widehat{B}^{(n-1)})^\#+\epsilon_{n}((\widehat{B}^{n-1})^{\#}\widehat{B}^{n-1}\widehat{Z}^{n-1}+\widehat{B}^{\#}\widehat{C}\widehat{B}^{\#}+\widehat{Z}^{n-1}\widehat{B}^{n-1}(\widehat{B}^{n-1})^{\#})\\
              &=(\widehat{B}^{(n-1)})^\#+\epsilon_{n}(-(\widehat{B}^{n-1})^{\#}\widehat{C}^{n-1}(\widehat{B}^{n-1})^\#+((\widehat{B}^{n-1})^{\#})^{2}\widehat{C}^{n-1}(I-\widehat{B}^{n-1}(\widehat{B}^{n-1})^{\#})\\
         &~~+(I-\widehat{B}^{n-1}(\widehat{B}^{n-1})^{\#})\widehat{C}^{n-1}(\widehat{B}^{n-1})^{\#})^{2}))\\
         &=(\widehat{B}^{n-1})^{\#}+\epsilon_{n}\widehat{Z}^{n-1}\\
         &= \widehat{X}^{(n)}.
           \end{align*}   
     Also, 
           \begin{align*}
               \widehat{A}^n\widehat{X}^{n}&=\widehat{B}^{n-1}(\widehat{B}^{n-1})^{\#}+\epsilon_{n}[\widehat{B}^{n-1}\widehat{Z}^{n-1}+\widehat{C}^{n-1}(\widehat{B}^{n-1})^{\#}]\\
               &=\widehat{B}^{n-1}(\widehat{B}^{n-1})^{\#}+\epsilon_{n}[-\widehat{B}^{n-1}(\widehat{B}^{n-1})^{\#}\widehat{C}^{n-1}(\widehat{B}^{n-1})^{\#}+(\widehat{B}^{n-1})^{\#}\widehat{C}^{n-1}\\
               &-(\widehat{B}^{n-1})^{\#}\widehat{C}^{n-1}\widehat{B}^{n-1}(\widehat{B}^{n-1})^{\#}
               +\widehat{C}^{n-1}(\widehat{B}^{n-1})^{\#}]
           \end{align*}
           and 
           \begin{align*}
               \widehat{X}^n\widehat{A}^n&=\widehat{B}^{n-1}(\widehat{B}^{n-1})^{\#}+\epsilon_{n}[\widehat{Z}^{n-1}\widehat{B}^{n-1}+(\widehat{B}^{n-1})^{\#}\widehat{C}]\\
               &=\widehat{B}^{n-1}(\widehat{B}^{n-1})^{\#}+\epsilon_{n}[-(\widehat{B}^{n-1})^{\#}\widehat{C}^{n-1}\widehat{B}^{n-1}(\widehat{B}^{n-1})^{\#}+\widehat{C}^{n-1}(\widehat{B}^{n-1})^{\#}\\
               &-\widehat{B}^{n-1}(\widehat{B}^{n-1})^{\#}\widehat{C}^{n-1}(\widehat{B}^{n-1})^{\#}+(\widehat{B}^{n-1})^{\#}\widehat{C}^{n-1}].
        \end{align*}
        Thus, $ \widehat{A}^n\widehat{X}^{n}=\widehat{X}^n\widehat{A}^n$.
        Therefore, $\widehat{X}^{n}$ is a group inverse of $\widehat{A}^n$.
  \end{proof}

Following similar steps as in Theorem \ref{thm3.2}, one may prove the uniqueness of the group inverse of $\widehat{A}^{(n)}$.

 \begin{theorem}
    Let $\widehat{A}^{(n)}=\widehat{B}^{(n-1)}+\epsilon_n\widehat{C}^{(n-1)}$ be an n-order dual matrix such that $(\widehat{B}^{(n-1)})^{\#}$ exists. If the group inverse of $\widehat{A}^{(n)}$ exists, then it is unique.
\end{theorem}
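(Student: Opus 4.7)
The plan is to mirror the argument of Theorem~\ref{thm3.2} almost verbatim, with $\widehat{B}^{(n-1)}$ playing the role of $\widehat{A}$ and $\widehat{C}^{(n-1)}$ playing the role of $\widehat{A}_0$, and with the dual unit $\epsilon_n$ replacing $\epsilon^*$. The first step is to observe that any group inverse of $\widehat{A}^{(n)}$ must have its primal part equal to $(\widehat{B}^{(n-1)})^{\#}$: writing an arbitrary candidate as $\widehat{X}^{(n)} = \widehat{Y}^{(n-1)} + \epsilon_n \widehat{Z}^{(n-1)}$ and comparing primal parts of the three defining equations $\widehat{A}^{(n)}\widehat{X}^{(n)}\widehat{A}^{(n)} = \widehat{A}^{(n)}$, $\widehat{X}^{(n)}\widehat{A}^{(n)}\widehat{X}^{(n)} = \widehat{X}^{(n)}$, and $\widehat{A}^{(n)}\widehat{X}^{(n)} = \widehat{X}^{(n)}\widehat{A}^{(n)}$ immediately forces $\widehat{Y}^{(n-1)}$ to be a group inverse of $\widehat{B}^{(n-1)}$, which is unique since $(\widehat{B}^{(n-1)})^{\#}$ is assumed to exist. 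So any two group inverses have the form $\widehat{X}^{(n)}_i = (\widehat{B}^{(n-1)})^{\#} + \epsilon_n \widehat{Z}^{(n-1)}_i$ for $i=1,2$.

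Next, I would set $\widehat{D}^{(n-1)} := \widehat{Z}^{(n-1)}_1 - \widehat{Z}^{(n-1)}_2$ and extract three identities by equating the $\epsilon_n$-parts of each defining equation for $\widehat{X}^{(n)}_1$ and $\widehat{X}^{(n)}_2$ and subtracting. From $\widehat{A}^{(n)}\widehat{X}^{(n)}_i\widehat{A}^{(n)} = \widehat{A}^{(n)}$ the subtraction yields
\[
\widehat{B}^{(n-1)}\,\widehat{D}^{(n-1)}\,\widehat{B}^{(n-1)} = 0.
\]
From $\widehat{X}^{(n)}_i\widehat{A}^{(n)}\widehat{X}^{(n)}_i = \widehat{X}^{(n)}_i$ the subtraction yields
\[
\widehat{D}^{(n-1)} = (\widehat{B}^{(n-1)})^{\#}\widehat{B}^{(n-1)}\widehat{D}^{(n-1)} + \widehat{D}^{(n-1)}\widehat{B}^{(n-1)}(\widehat{B}^{(n-1)})^{\#}.
\]
From $\widehat{A}^{(n)}\widehat{X}^{(n)}_i = \widehat{X}^{(n)}_i\widehat{A}^{(n)}$ the subtraction yields the commutativity relation $\widehat{B}^{(n-1)}\widehat{D}^{(n-1)} = \widehat{D}^{(n-1)}\widehat{B}^{(n-1)}$.

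The closing step is exactly the chain used in Theorem~\ref{thm3.2}: post-multiply the first identity by $(\widehat{B}^{(n-1)})^{\#}$ and then shuttle $(\widehat{B}^{(n-1)})^{\#}$ across $\widehat{B}^{(n-1)}\widehat{D}^{(n-1)}$ using the commutativity relation together with $\widehat{B}^{(n-1)}(\widehat{B}^{(n-1)})^{\#}\widehat{B}^{(n-1)} = \widehat{B}^{(n-1)}$. This collapses to $\widehat{D}^{(n-1)}\widehat{B}^{(n-1)} = 0$, and by the commutativity relation also $\widehat{B}^{(n-1)}\widehat{D}^{(n-1)} = 0$. Substituting these two vanishings into the second identity forces $\widehat{D}^{(n-1)} = 0$, i.e.\ $\widehat{Z}^{(n-1)}_1 = \widehat{Z}^{(n-1)}_2$, giving uniqueness.

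The main potential obstacle is conceptual rather than computational: the coefficients $\widehat{B}^{(n-1)}$, $\widehat{C}^{(n-1)}$, $\widehat{D}^{(n-1)}$ live in the ring of $(n-1)$-order dual matrices, not in $\mathbb{R}^{n\times n}$. I need to verify that every manipulation above uses only associativity of multiplication, distributivity, and the three group-inverse axioms for $(\widehat{B}^{(n-1)})^{\#}$ — all of which hold in the $(n-1)$-order dual-matrix ring. Once this is granted, the proof is purely algebraic and transcribes the Theorem~\ref{thm3.2} argument with the substitutions noted above.
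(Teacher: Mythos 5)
Your proposal is correct and is precisely the argument the paper intends: the paper gives no explicit proof here, stating only that uniqueness follows ``by similar steps as in Theorem~\ref{thm3.2},'' and your transcription of that argument (with $\widehat{B}^{(n-1)}$, $\widehat{C}^{(n-1)}$, $\epsilon_n$ in place of $\widehat{A}$, $\widehat{A}_0$, $\epsilon^*$) is exactly what is meant. Your closing remark that every step uses only ring operations and the group-inverse axioms, and hence survives the passage from real matrices to the ring of $(n-1)$-order dual matrices, is the one point the paper leaves implicit and is worth making explicit.
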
 

\section{Conclusion}
The essential determinations are summarized as follows.
\begin{itemize}
\item The necessary and sufficient conditions for the existence of the HDGGI are established.

\item Uniqueness of HDGGI (whenever it exists) is proved. Characterization of the existence of a group inverse of a hyper-dual
matrix along with a compact formula for its computation is presented.

\item Applications of HDGGI are presented for solving a hyper-dual linear system. Additionally, the reverse and forward-order laws for a particular form of  the HDMPI and  HDGGI have been established.

 \item Finally, we have presented conditions for the existence of the group inverse of $n$-order dual matrices.
\end{itemize}

\section*{Data Availability Statements}
Data sharing not applicable to this article as no datasets were generated or analysed during the current study.

\section*{Conflicts of Interest}
The authors declare that they have no conflict of interest.

\section*{Funding} 
This declaration is “not applicable”.

\section*{Acknowledgements}

The first author acknowledges the support of the National Institute of Technology Raipur,
India.  The fourth author was partially supported by was partially supported by Universidad Nacional de La Pampa, Facultad de Ingenier\'{\i}a (Argentina) [Grant Resol. 172/2024], Universidad Nacional del Sur (Argentina) [Grant PGI 24/ZL22],  Universidad Nacional de R\'{\i}o Cuarto  (Argentina) (Grant Resol. RR 449/2024), and Ministerio de Econom\'{\i}a, Industria y Competitividad (Spain) [Grant Red de Excelencia RED2022- 134176-T].





\begin{thebibliography}{10}

\bibitem{Angeles:1998}{Angeles, J.}, {\it The application of dual algebra to kinematic analysis}. In: Angeles, J., Zakhariev,
E. (eds). Computational Methods in Mechanical Systems, Springer-Verlag, Heidelberg, 161 (1998), 3–31.

\bibitem{angeles}{Angeles, J.}, {\it The Dual Generalized Inverses and Their Applications in Kinematic Synthesis}, in Latest
Advances in Robot Kinematics, Springer Netherlands, Dordrecht, 2012, 1-10.



\bibitem{azar}{Azariadis, P.; Aspragathos, N.,} {\it Computer graphics representation and transformation of geometric entities using dual unit vectors and line  transformations}, Comput. Graph. 25 (2001), 195–209.
\bibitem{bask2010}{Baksalary, O.M.; Trenkler, G., {\it Core inverse of matrices,} Linear Multilinear Algebra. 58 (2010), 681–697. }
\bibitem{ben1974}{Ben-Israel, A.; Greville, T.N.E.,} \textit{Generalized Inverses: Theory and Applications,} {Springer, New York, 2003.}

\bibitem{cui:2024}{Cui, C.; Qi, L., \textit{A genuine extension of the Moore-Penrose inverse to dual matrices,} J. Comput. Appl. Math., 454 (2025), 116185.}


\bibitem{illic_1}{Cvetkovi\'c Ili\'c, D.; Milenkovi\'c, J.,} {\it Improvements on the reverse order laws}, Math. Nachr., 296 (2023), 2850-2858.
 
\bibitem{illic_2}{Cvetkovi\'c-Ili\'c, D.; Milo\v{s}evi\'c, J.,} {\it Reverse order laws for \{1, 3\}-generalized inverses}, Linear Multilinear Algebra, 67 (2019), 613-624.



\bibitem{app3}{De Falco, D.; Pennestr\`i, E.; Udwadia, F.E.,}  {\it On generalized inverses of dual matrices}, Mech. Mach. Theory,  123 (2018), 89-106.








\bibitem{fike1}{Fike, J.,} {\it Numerically exact derivative calculations using hyper-dual numbers}, 3rd Annural Student
Joint Workshop in Simulation-Based Engineering and Design, 2009.



\bibitem{fike2} {Fike, J.; Alonso, J.,} {\it The development of hyper-dual numbers for exact second-derivative
 calculations}, 49th AIAA Aerospace Sciences Meeting including the New Horizons Forum and
 Aerospace Exposition, 2011.


\bibitem{fike3} {Fike, J.; Jongsma, S.;  Alonso, J.; van der Weida, E.}, {\it Optimization with gradient and Hessian
 information calculated using hyper-dual numbers}, 29th AIAA Applied Aerodynamics Conference,
 2011.

\bibitem{app4}{Fischer, I.S.} {\it Dual-Number Methods in 
Kinematics, Statics and Dynamics}, Routledge, 2017.

\bibitem{wei_4}{Jiang, T.; Wang, H.; Wei, Y.,} {\it Perturbation of dual group generalized inverse and group Inverse}, Symmetry, 16 (2024), 1103.

\bibitem{k11} {Kumar, A.; Mishra, D.,} {\it On forward-order law for core inverse in rings}, Aequat. Math., 97  (2023), 537–562. 


\bibitem{k12}{Kumar, A.; Shekhar, V.;  Mishra, D.,} {\it $W$-weighted GDMP inverse for rectangular matrices}, {Electron J. Linear Algebra,} 38 (2022), 632-657. 

\bibitem{mosic_2}{Mosi\'c, D.,} {\it New generalizations of the core and dual core inverses}, Publ. Math. Debr., 105 (2024), 119-140.

\bibitem{oncu}{Pennock, G.R.; Oncu, B.A.}, {\it Application of screw theory to rigid body dynamics}, ASME J. Dyn. Syst. Meas. Control, 114 (1992), 262–269.

\bibitem{wei_2}{Liu, C.; Wei, Y., Xie, P.,} {\it On the relation between the exponential of real matrices and that of dual matrices}, Appl. Math. Lett., 163 (2025), 109466.

\bibitem{app2}{Pennestr\`i, E.; Valentini, P.P.; Figliolini, G.; Angeles, J.,} {\it Dual cayleyklein parameters and mobius transform: theory and applications}, 	 Mech. Mach. Theory, 106 (2016), 50-67.

\bibitem{rg2}{Pennestr\`i, E.; Valentini, P.P.; De Falco, D.,} {\it The Moore-Penrose dual generalized inverse matrix with application to kinematic synthesis of spatial linkages}, J. Mech. Des., 140 (2018), 102303-102310.



\bibitem{rg1}{Pennestr\`i, E.; Valentini, P.P.,} {\it Linear dual algebra algorithms and their application to kinematics},  Multibody dynamics Comput. Methods Appl. Sci.,  12 (2009), 207-229.




\bibitem{app1}{Qi, L.; Ling, C.; Yan, H.} {\it Dual quaternions and dual quaternion vectors}, Commun. Appl. Math. Comput.,  4 (2022), 1494-1508.


\bibitem{mosic_1}{Sitha, B.; Mosi\'c, D.,} {\it On binary relation of dual core generalized inverses}, Linear Multilinear Algebra, 73 (2025), 3434-3454.

\bibitem{udw2}{Udwadia, F.E.,} {\it Dual generalized inverses and their use in solving systems of linear dual equation}, Mech. Mach. Theory, 156 (2021), 104158.


\bibitem{udw1}{Udwadia, F.E.; Pennestrı, E.; Falco, D. De,}  {\it Do all dual matrices have dual Moore-Penrose generalized inverses?}  Mech. Mach. Theory, 151 (2020), 103878.





\bibitem{verma:2024}{Verma, T.; Kumar, A.; Mishra, D.}, {\it Reverse order law for NDMPI of dual matrices and its applications.} Linear Multilinear Algebra, 73 (2025), 3380-3400.

\bibitem{rg5}{Wang, H.; Gao, J.}, {\it The dual index and dual core generalized inverse}, Open Math., 21 (2023), 20220592.


\bibitem{wei:1998}{Wei, Y.,} {\it Index splitting for the Drazin inverse and the singular linear
system}, Appl. Math. Comput., 95 (2016), 115–124.


 
\bibitem{k13}{Wang, X.; Yu, A.; Li, T.; Deng, C.,} {\it Reverse-order laws for the Drazin inverses}, J. Math. Anal. Appl.,  444 (2016), 672–689.

\bibitem{wei_5}{Wei, T.; Ding, W.; Wei, Y.,} {\it Singular value decomposition of dual matrices and its application to traveling wave identification in the brain}, SIAM J. Matrix Anal. Appl., 45 (2024), 634-660.



\bibitem{hyper}{Xiao, Q.; Zhang, J.,} {\it Characterizations and properties of hyper-dual Moore-Penrose generalized
 inverse},  AIMS Mathematics, 9 (2024), 35125–35150.

\bibitem{wei_3}{Xu, R.; Wei, T.; Wei, Y.; Xie, P.,} {\it QR decomposition of dual matrices and its application}, Appl. Math. Lett., 156 (2024), 109144.

\bibitem{wei_1}{Xu, R.; Wei, Y.; Yan, H.,} {\it LU decomposition and generalized Autoone-Takagi decomposition of dual matrices and their applications}, Linear Multilinear Algebra, 73 (2025), 3179-3201. 



\bibitem{dualdrazin}{Zhong, J.; Zhang, Y.,} {\it Dual Drazin inverses of dual matrices and dual Drazin-inverse solutions of systems of linear dual equations},  Filomat, 37 (2023), 3075–3089.

 \bibitem{dualgroup}{Zhong, J.; Zhang, Y.,} {\it Dual group inverses of dual matrices and their applications in solving
systems of linear dual equations}, AIMS Mathematics, 7 (2022), 7606-7624.


\bibitem{zhou_3}{Zhou, Y.; Chen, J.; Thome, N.,} {\it Additive properties and absorption laws for generalized inverses}, Linear Multilinear Algebra, 74 (2026), 213-227.


\bibitem{zhou_1}{Zhou, J.; Chen, J.; Thome, N.,} {\it The core-EP inverse: a numerical approach for its acute perturbation}, Linear Multilinear Algebra, 73 (2025), 799-813.



 







\end{thebibliography}
\end{document}